\numberwithin{equation}{section}
\newtheorem{thm}[subsection]{Theorem}
\newtheorem{prop}[subsection]{Proposition}
\newtheorem{cor}[subsection]{Corollary}
\theoremstyle{definition}  
\newtheorem*{lemma*}{Lemma}
\newtheorem*{thm*}{Theorem}
\newtheorem{lem}[subsection]{Lemma}{\bf}{\it}
\newcommand{\chapter}{\section}
\newcommand{\Smash}             {\wedge}
\newcommand{\cat}{\EuScript}    
\newcommand{\cU}{{\cat U}}
\newcommand{\field}[1]  {\mathbb #1} 
\newcommand{\R}         {\field R}
\newcommand{\N}         {\field N}
\newcommand{\Z}         {\field Z}
\newcommand{\bP}        {\field P}
\newcommand{\ra}{\rightarrow}                   
\newcommand{\fib}{\twoheadrightarrow}           
\newcommand{\inc}{\hookrightarrow}              
\newcommand{\tuborg}{\left\{\begin{array}{ll}}
\newcommand{\sluttuborg}{\end{array}\right.}
\begin{document}

\title{A Freeness Theorem for $RO(\Z/2)$-graded Cohomology}

\author{William C. Kronholm}
\address{Department of Mathematics and Statistics\\ Swarthmore College\\ Swarthmore, PA 19081 }

\date{\today}
\begin{abstract}
In this paper it is shown that the $RO(\Z/2)$-graded cohomology of a
certain class of $\text{Rep}(\Z/2)$-complexes, which includes
projective spaces and Grassmann manifolds, is always free as a
module over the cohomology of a point when the coefficient Mackey
functor is $\underline{\Z/2}$.

 \keywords{Algebraic Topology \and
Equivariant Topology \and Equivariant Homology and Cohomology \and
Homology of Classifying Spaces}
\end{abstract}
\maketitle

\tableofcontents


\section{Introduction}
\label{intro} In nonequivariant topology, it is a triviality that
spaces built of only even dimensional cells will have free
cohomology, regardless of the chosen coefficient ring.  It is just
as easy to see that every space has free cohomology when the
coefficient ring is taken to be $\Z/2$.  Analogous results are not
so clear in the equivariant setting.

In \cite{FL}, it is shown that the $RO(\Z/p)$-graded homology of a
$\Z/p$-space built of only even dimensional cells is free as a
module over the homology of a point, regardless of which Mackey
functor is chosen for coefficients.  The goal of this paper is to establish a similar result for the cohomology of $G=\Z/2$-spaces without the restriction to cells of even degrees, but with the assumption of using
constant $\underline{\Z/2}$ Mackey functor coefficients.  Here is
the main result:

\begin{thm*}
If $X$ is a connected, locally finite, finite
dimensional $\text{Rep}(\Z/2)$-complex, then
$H^{*,*}(X;\underline{\Z/2})$ is free as a
$H^{*,*}(pt;\underline{\Z/2})$-module.
\end{thm*}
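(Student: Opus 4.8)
The plan is to induct on the cells of $X$ via the skeletal filtration. First I would dispose of the infinite case: since $X$ is locally finite and finite dimensional it is a filtered colimit of finite $\text{Rep}(\Z/2)$-subcomplexes, and a standard colimit/$\lim^1$ argument reduces the statement to finite $X$. For finite $X$ I induct on the number of cells, the inductive step attaching a single cell, $X' = Y\cup_f D^{p,q}$, where $Y$ is a subcomplex already known to have free cohomology over $\mathbb{M}:=H^{*,*}(\pt;\underline{\Z/2})$ and $f\colon S^{p-1,q}\to Y$ is the attaching map. The only honest topological input is that, by the suspension isomorphism, the reduced cohomology of a representation sphere is free of rank one: $\widetilde H^{*,*}(S^{p,q};\underline{\Z/2})\iso\Sigma^{p,q}\mathbb{M}$. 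The cofiber sequence $Y\to X'\to X'/Y=S^{p,q}$ then produces a long exact sequence of $\mathbb{M}$-modules
\[
\cdots\to\widetilde H^{*,*}(S^{p,q})\to H^{*,*}(X')\to H^{*,*}(Y)\xrightarrow{\ \delta\ }\widetilde H^{*+1,*}(S^{p,q})\to\cdots,
\]
so $H^{*,*}(X')$ is an extension of $\ker\delta$ by $\coker\delta$. Free modules are projective, so such an extension splits as soon as its two outer terms are free; it therefore suffices to show $\ker\delta$ and $\coker\delta$ are free, and in particular it would suffice that $\delta=0$. Assembled over all cells at once, this is just the cellular spectral sequence whose $E_1$-page is a sum of shifted copies of $\mathbb{M}$, one per cell of $X$, converging to $H^{*,*}(X)$; what one wants is that its differentials and extension problems do no damage.

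The crux is that $\delta$ need not vanish: a map from the free module $H^{*,*}(Y)$ to the cyclic free module $\Sigma^{p,q}\mathbb{M}$ can carry a generator to a non-unit monomial $\rho^i\tau^j$ in the positive cone of $\mathbb{M}$, and $\mathbb{M}/(\rho^i\tau^j)$ is not free. This is precisely the failure mode that cannot occur in the even-cell, arbitrary-coefficient theorem of \cite{FL} (there $\delta$ dies for parity reasons), and it is where both standing hypotheses must be spent. To control $\delta$ I would use the two forgetful comparisons that are available exactly because the coefficient Mackey functor is $\underline{\Z/2}$: restriction to the underlying complex $X^e$, and passage to geometric fixed points, which for a $\text{Rep}(\Z/2)$-complex recovers a localization of the ordinary $\Z/2$-cohomology of the fixed complex $X^{\Z/2}$. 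Both $H^*(X^e;\Z/2)$ and $H^*(X^{\Z/2};\Z/2)$ are free over $\Z/2$ — this is the one place $\underline{\Z/2}$ is genuinely needed — and they sit in the forgetful long exact sequence relating $H^{*,*}(X)$, multiplication by the Euler class $\rho\in H^{1,1}(\pt)$, and $H^*(X^e;\Z/2)$, together with the identification of $\rho^{-1}H^{*,*}(X)$ with a bigraded object built from $H^*(X^{\Z/2};\Z/2)$.

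From these inputs one gets, in each bidegree, an upper bound on the size of $H^{*,*}(X)$; and one has an algebraic freeness criterion for $\mathbb{M}$-modules — roughly, a bounded-below, finite-type $\mathbb{M}$-module whose $\rho$-inverted part is free over $\rho^{-1}\mathbb{M}$ and whose $\rho$-torsion-free part is accounted for by $H^*(X^e;\Z/2)$ must be free. Since the induction simultaneously shows the bound is realized by a free module, $H^{*,*}(X)$ is squeezed into being free, which forces each $\delta$ to be unit-or-zero and each extension to split. I expect the main obstacle to be exactly this algebraic heart: proving the freeness criterion and checking its hypotheses — equivalently, ruling out spurious negative-cone torsion in the wrong bidegrees and confirming that the underlying and fixed-point data really do pin the module down. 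The remaining work is bookkeeping with suspension isomorphisms and long exact sequences, together with a careful organization of the cell structure — for instance attaching cells in order of underlying dimension, with the fixed complex built first — to keep the attaching maps, and hence $\delta$, transparent.
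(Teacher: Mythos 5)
Your overall architecture — one‑cell‑at‑a‑time induction, the cofiber long exact sequence for $Y \subset X' \to S^{p,q}$, and the forgetful map to singular cohomology as the tool for constraining the connecting map $\delta$ — matches the paper's. But there is a genuine gap at what you correctly identify as the crux, and it comes from overlooking half of the ring $H^{*,*}(pt)$. You treat the only dangerous failure mode as $\delta(\omega)=\rho^i\tau^j\nu$, a non‑unit monomial in the \emph{top} cone, and you conclude that freeness of the answer must force ``each $\delta$ to be unit‑or‑zero,'' with $\ker\delta$ and $\coker\delta$ free and the extension split. The paper does rule out top‑cone differentials (by essentially your forgetful‑map idea: if $d(\omega)=\tau^{n}\nu$ with $n>0$ then $\nu$ survives, is not divisible by $\rho$, hence maps to a nonzero singular class, hence $\tau^{n}\nu\neq 0$ --- contradiction). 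What it cannot rule out, and what actually happens, is $\delta(\omega)=\frac{\theta}{\rho^{n}\tau^{m}}\nu$ landing in the \emph{bottom} (infinitely divisible) cone. In that case neither $\ker\delta$ nor $\coker\delta$ is free, so your splitting step fails; nevertheless the extension \emph{is} free, on two generators in the shifted degrees $(p-n-1,q-n-1)$ and $(p,q-m-1)$. Proving that this non‑split‑looking situation still reassembles into a free module (via the identity $j^*(\nu)=\rho^{n+1}a+\tau^{m}b$) is the paper's main technical theorem, and your proposal has no mechanism for it.

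The ``squeeze'' you propose also cannot close this gap: $H^*(X^e;\Z/2)$ and the fixed‑point data determine the total rank and the $\rho$‑localization, but not the bigraded dimensions of $H^{*,*}(X)$, precisely because bottom‑cone differentials shift generators to degrees different from the cells. The paper's computation of $G_2(\R^{4,2})$ is a concrete counterexample to your expectation: every choice of cell structure there has nonzero, non‑unit differentials, yet the cohomology is free — with generators in degrees $(1,1),(2,1),(2,2),(3,2),(4,2)$ that match no cell structure; and the paper notes that in general the generator degrees cannot be pinned down by such comparisons at all. To repair the argument you need (i) the change‑of‑basis lemma reducing to a single generator hitting $\nu$, (ii) the forgetful‑map contradiction killing top‑cone differentials, and (iii) the explicit analysis of a bottom‑cone differential showing the resulting kernel‑by‑cokernel extension is free with shifted generators; your proposal supplies a version of (ii) but neither (i) nor (iii).
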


\noindent (The bigrading will be explained in Section \ref{sec:Prelim}.)

The projective spaces and Grassmann
manifolds associated to representations of $\Z/2$ are examples of such $\text{Rep}(\Z/2)$-complexes.  In these
particular cases, the free generators of the cohomology modules are
in bijective correspondence with the Schubert cells.  The precise degrees of the cohomology generators is typically unknown, much
like in \cite{FL}.

Section \ref{sec:Prelim} provides some of the background and notation required
for the rest of the paper.  Most of this information can be found in
\cite{Alaska} and \cite{FL} but is reproduced here for convenience.
Section \ref{sec:Freeness} holds the main freeness theorem. As
applications of the freeness theorem, section \ref{sec:RPs} exhibits
some techniques for calculating the cohomology of
$\text{Rep}(G)$-complexes.  The importance of such calculations lies
in their potential applications toward understanding $RO(G)$-graded
equivariant characteristic classes.

The work in this paper was originally part of the author's
dissertation while at the University of Oregon.

The author is indebted to Dan Dugger for his guidance and
innumerable helpful conversations.

\section{Preliminaries}
\label{sec:Prelim} This section contains some of the basic machinery
and notations that will be used throughout the paper.  In this
section, $G$ can be any finite group unless otherwise specified.

Given a $G$-representation $V$, let $D(V)$ and $S(V)$ denote the
unit disk and unit sphere, respectively, in $V$ with action induced
by that on $V$.  A \bf$\text{Rep}(G)$-complex \rm is a $G$-space $X$ with a
filtration $X^{(n)}$ where $X^{(0)}$ is a disjoint union of
$G$-orbits and $X^{(n)}$ is obtained from $X^{(n-1)}$ by attaching
cells of the form $D(V_\alpha)$ along maps $f_\alpha \colon
S(V_\alpha) \ra X^{(n-1)}$ where $V_\alpha$ is an $n$-dimensional
real representation of $G$.  The space $X^{(n)}$ is referred to as
the \bf $n$-skeleton \rm of $X$, and the filtration is referred to as a \bf cell
structure\rm .

For the precise definition of a Mackey functor when $G=\Z/2$, the
reader is referred to \cite{LMM} or \cite{DuggerKR}.  A summary
of the important aspects of a Mackey functor is given here.  The
data of a Mackey functor are encoded in a diagram like the one
below.

\[\xymatrix{ M(\Z/2) \ar@(ur,ul)[]^{t^*} \ar@/^/[r]^(0.6){i_*} & M(e) \ar@/^/[l]^(0.4){i^*}} \]

The maps must satisfy the following four conditions.
\begin{enumerate}
\item $(t^*)^2 = id$
\item $t^*i^*=i^*$
\item $i_*(t^*)^{-1}=i_*$
\item $i^*i_*=id+t^*$
\end{enumerate}

According to \cite{Alaska}, each Mackey functor $M$ uniquely
determines an $RO(G)$-graded cohomology theory characterized by
\begin{enumerate}
\item $H^n(G/H;M) =\begin{cases}
M(G/H) & \text{ if } n=0 \\
0 & \text{otherwise}\end{cases}$
\item The map $H^0(G/K;M) \ra H^0(G/H;M)$ induced by $i \colon G/H \ra G/K$ is the transfer map $i^*$ in the Mackey functor.
\end{enumerate}

A $p$-dimensional real $\Z/2$-representation $V$ decomposes as
$V=(\R^{1,0})^{p-q} \oplus (\R^{1,1})^q =\R^{p,q}$ where $\R^{1,0}$
is the trivial 1-dimensional real representation of $\Z/2$ and $\R^{1,1}$ is the nontrivial
1-dimensional real representation of $\Z/2$.  Thus the $RO(\Z/2)$-graded theory is
a bigraded theory, one grading measuring dimension and the other
measuring the number of ``twists''.  In this case, we write
$H^{V}(X;M)=H^{p,q}(X;M)$ for the $V^{\text{th}}$ graded component
of the $RO(\Z/2)$-graded equivariant cohomology of $X$ with
coefficients in a Mackey functor $M$.

In this paper, $G$ will typically be $\Z/2$ and the Mackey functor
will almost always be constant $M=\underline{\Z/2}$ which has the
following diagram.

\[\xymatrix{ \Z/2 \ar@(ur,ul)[]^{id} \ar@/^/[r]^{0} & \Z/2 \ar@/^/[l]^{id}} \]

With these constant coefficients, the $RO(\Z/2)$-graded cohomology
of a point is given by the picture in Figure \ref{fig:pt}.

\begin{figure}[htpb]
\centering
\begin{picture}(100,100)(-100,-100)
\put(-100,-50){\vector(1,0){100}}
\put(-50,-100){\vector(0,1){100}}

\put(-50, -51){\line(0,1){40}} \put(-50, -51){\line(1,1){40}}
\put(-50, -91){\line(0,-1){20}} \put(-50, -91){\line(-1,-1){20}}
\put(-50, -51){\circle*{2}}

\multiput(-90,-51)(20,0){5}{\line(0,1){3}}
\multiput(-52,-90)(0,20){5}{\line(1,0){3}}
\put(-49,-57){$\scriptscriptstyle{0}$}
\put(-29,-57){$\scriptscriptstyle{1}$}
\put(-9,-57){$\scriptscriptstyle{2}$}
\put(-69,-57){$\scriptscriptstyle{-1}$}
\put(-89,-57){$\scriptscriptstyle{-2}$}
\put(-57,-47){$\scriptscriptstyle{0}$}
\put(-57,-27){$\scriptscriptstyle{1}$}
\put(-57,-7){$\scriptscriptstyle{2}$}

\put(-48,-35){$\tau$} \put(-30,-35){$\rho$} \put(-10,-15){$\rho^2$}
\put(-30,-17){$\tau\rho$} \put(-58, -90){$\theta$} \put(-48,
-102){$\frac{\theta}{\tau}$} \put(-78, -102){$\frac{\theta}{\rho}$}

\put(-58,0){${q}$} \put(0,-60){${p}$}

\multiput(-50, -30)(20,20){2}{\circle*{2}} \multiput(-50,
-50)(20,20){3}{\circle*{2}} \put(-50, -10){\circle*{2}} \put(-50,
-90){\circle*{2}} \put(-50, -110){\circle*{2}} \put(-70,
-110){\circle*{2}}

\end{picture}

\caption{$H^{*,*}(pt;\Z/2)$} \label{fig:pt}
\end{figure}

Every lattice point in the picture that is inside the indicated
cones represents a copy of the group $\Z/2$.  The \bf top cone \rm is a
polynomial algebra on the nonzero elements $\rho \in
H^{1,1}(pt;\underline{\Z/2})$ and $\tau \in
H^{0,1}(pt;\underline{\Z/2})$.  The nonzero element $\theta \in H^{0,-2}(pt;\underline{\Z/2})$ in the \bf bottom
cone \rm is infinitely divisible by both $\rho$ and $\tau$.  The
cohomology of $\Z/2$ is easier to describe:
$H^{*,*}(\Z/2;\underline{\Z/2})=\Z/2[t, t^{-1}]$ where $t \in
H^{0,1}(\Z/2;\underline{\Z/2})$.  Details can be found in
\cite{DuggerKR} and \cite{Caruso}.

A useful tool is the following exact sequence of \cite{AM}.

\begin{lem}[Forgetful Long Exact Sequence]
\label{lemma:forget} Let $X$ be a based $\Z/2$-space.  Then for
every $q$ there is a long exact sequence

$$\xymatrix{\cdots \ar[r] & H^{p,q}(X) \ar[r]^(0.4){\cdot \rho} & H^{p+1,q+1}(X) \ar[r]^(0.55){\psi} & H^{p+1}_{sing}(X) \ar[r]^(0.45)\delta & H^{p+1,q}(X)} \ra \cdots$$

\end{lem}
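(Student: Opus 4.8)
The plan is to obtain the stated sequence as the long exact (Puppe) cohomology sequence of a single cofiber sequence of based $\Z/2$-spaces, namely the one gotten by smashing $X$ with the Euler cofiber sequence of the sign representation $\R^{1,1}$. Throughout, the cohomology of a based space means reduced $RO(\Z/2)$-graded cohomology with coefficients in $\underline{\Z/2}$, and I suppress the tilde.

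First I would set up the cofiber sequence at the level of a point. The unit sphere $S(\R^{1,1})$ is the free orbit $\Z/2$, and the unit disk $D(\R^{1,1})$ is $\Z/2$-equivariantly contractible (linear contraction to the origin), so the standard cofiber sequence $S(V)_+ \ra D(V)_+ \ra D(V)/S(V)=S^V$ for $V=\R^{1,1}$ becomes
\[ (\Z/2)_+ \lra S^0 \llra{e} S^{1,1}, \]
in which $e$ represents the Euler class $\rho\in H^{1,1}(\pt;\underline{\Z/2})$ of $\R^{1,1}$. Smashing this with $X$ and using $S^0\wedge X = X$ yields a cofiber sequence of based $\Z/2$-spaces $(\Z/2)_+\wedge X \ra X \ra S^{1,1}\wedge X$. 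Applying $H^{*,*}(-;\underline{\Z/2})$ gives, in each fixed bidegree, a long exact sequence; it remains only to identify the three families of terms and the maps between them. The representation-sphere suspension isomorphism gives $H^{p+1,q+1}(S^{1,1}\wedge X)\iso H^{p,q}(X)$, and, because $e$ represents $\rho$ and this isomorphism is $H^{*,*}(\pt)$-linear, the map induced by $e\wedge 1$ is, under that identification, multiplication by $\rho$. For the remaining term, the untwisting homeomorphism $(g,x)\mapsto(g,g^{-1}x)$ exhibits $(\Z/2)_+\wedge X$ as a free based $\Z/2$-space, and the $RO(\Z/2)$-graded cohomology of a free based $\Z/2$-space with coefficients in $\underline{\Z/2}$ is the reduced singular cohomology of its underlying space with $\Z/2$-coefficients --- a change-of-groups statement over the trivial subgroup, on which $\underline{\Z/2}$ restricts to $\Z/2$ --- so $H^{p+1,q+1}((\Z/2)_+\wedge X)\iso H^{p+1}_{sing}(X)$, independently of $q$. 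Feeding these identifications into the Puppe sequence and relabelling indices produces precisely the sequence in the statement, with $\psi$ induced by $(\Z/2)_+\wedge X\ra X$ (the ``forgetful'' map) and $\delta$ the Puppe connecting map.

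I do not expect a real obstacle: once the displayed cofiber sequence is in hand the result is formal. The substantive input to isolate is the identification $H^{*,*}((\Z/2)_+\wedge X;\underline{\Z/2})\iso H^{*}_{sing}(X;\Z/2)$, which is what collapses the bigrading to a single grading in the middle term and explains the $q$-independence there. The only point needing a moment's care is the assertion that the first map is literally multiplication by $\rho$ rather than some unspecified natural map raising degree by $(1,1)$; this follows by naturality from the universal case $X=S^0$, where $e\wedge 1=e$ represents $\rho$ by construction.
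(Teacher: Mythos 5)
Your argument is correct. The paper does not actually prove this lemma---it is quoted from \cite{AM}---but the derivation you give, smashing $X$ with the Euler cofiber sequence $(\Z/2)_+\to S^0\to S^{1,1}$ of $\R^{1,1}$ and identifying the three terms via the $S^{1,1}$-suspension isomorphism and the adjunction isomorphism $\tilde{H}^{p,q}((\Z/2)_+\wedge X;\underline{\Z/2})\cong \tilde{H}^{p}_{sing}(X;\Z/2)$, is the standard proof of this sequence, and each of the identifications (including that the first map is literally multiplication by $\rho$ and that the middle map is the forgetful map) is made correctly.
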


\noindent The map $\cdot \rho$ is multiplication by $\rho \in
H^{1,1}(pt;\underline{\Z/2})$ and $\psi$ is the forgetful map to
non-equivariant cohomology with $\Z/2$ coefficients.

\section{The Freeness Theorem}
\label{sec:Freeness}

Computing the $RO(G)$-graded cohomology of a $G$-space $X$ is
typically quite a difficult task.  However, if $X$ has a filtration
$X^{(0)} \subseteq X^{(1)} \subseteq \cdots$, then we can take
advantage of the long exact sequences in cohomology arising from the cofiber
sequences $X^{(n)} \subseteq X^{(n+1)} \ra X^{(n+1)}/X^{(n)}$.
These long exact sequences paste together as an exact couple in the usual way,
giving rise to a spectral sequence associated to the filtration.

If $X$ is a $G$-CW complex or a $\text{Rep}(G)$-complex, then $X$
has a natural filtration coming from the cell structure.  In either
case, if $X$ is connected, the quotient spaces $X^{(n+1)}/X^{(n)}$
are wedges of $(n+1)$-spheres with action determined by the type of
cells that were attached.  Examples of this sort appear throughout the paper.

For the remainder of the paper, we
will only be interested in the case $G=\Z/2$ and always take coefficients to be $\underline{\Z/2}$.  These choices will be implicit in our notation.

Given a filtered $\Z/2$ space $X$, for each fixed $q$ there is a long exact sequence

\[ \cdots H^{*,q}(X^{(n+1)}/X^{(n)})\ra H^{*,q}(X^{(n+1)}) \ra H^{*,q}(X^{(n)}) \ra H^{*+1,q}(X^{(n+1)}/X^{(n)}) \cdots \]

\noindent and so there is one spectral sequence for each integer $q$.  The
specifics are given in the following proposition.

\begin{prop} Let $X$ be a filtered $\Z/2$-space.  Then for each $q\in\Z$ there is a spectral sequence with
$$E_1^{p,n} = H^{p,q}(X^{(n+1)}, X^{(n)})$$
converging to $H^{p,q}(X)$.
\end{prop}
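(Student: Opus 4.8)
The plan is to construct the spectral sequence from the exact couple assembled out of the long exact sequences displayed just before the statement, following the standard recipe for a filtered space. First I would fix $q \in \Z$ once and for all, so that everything lives in the single-variable theory $H^{*,q}(-)$ as the first grading varies. For the cofiber sequence $X^{(n)} \inc X^{(n+1)} \ra X^{(n+1)}/X^{(n)}$, the associated long exact sequence in $H^{*,q}$ gives, for every pair $(p,n)$, the groups and maps needed for an unrolled exact couple. Concretely I set $D_1^{p,n} = H^{p,q}(X^{(n)})$ and $E_1^{p,n} = H^{p,q}(X^{(n+1)}, X^{(n)})$, with the three structure maps: $i\colon D_1^{p,n+1}\to D_1^{p,n}$ the restriction induced by $X^{(n)}\inc X^{(n+1)}$, $j\colon D_1^{p,n}\to E_1^{p,n}$ from the long exact sequence of the pair, and $k\colon E_1^{p,n}\to D_1^{p+1,n}$ the connecting homomorphism $\delta$. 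Exactness at each vertex of the resulting triangle is exactly the exactness of the cohomology long exact sequence of the pair $(X^{(n+1)}, X^{(n)})$, so the exact couple is genuinely exact.

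Next I would run the purely formal machinery: derive the spectral sequence from this exact couple in the usual way, so that $E_1$ has the stated form, $d_1 = j\circ k$ raises $p$ by one and the filtration index appropriately, and $E_{r+1}$ is the homology of $(E_r, d_r)$, with the derived couple built from the images of $i$. No originality is needed here; it is the standard construction (as referenced in the text, "pasting together as an exact couple in the usual way"), so I would simply cite that and record the indexing. The one point worth spelling out is the direction of the differentials and the identification of $d_1$ with the composite connecting-map-then-restriction-to-pair, which is immediate from the definitions of $j$ and $k$.

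The main substantive step — and the place I expect the real work to be — is the convergence statement: that the spectral sequence converges to $H^{*,q}(X)$. For this I would use the hypotheses carried implicitly from the main theorem and the surrounding discussion, namely that $X$ is built as an increasing union of its skeleta $X^{(0)} \subseteq X^{(1)} \subseteq \cdots$ with $X = \colim_n X^{(n)}$. In good cases (e.g. when $X$ is finite-dimensional, so the filtration is eventually constant, or locally finite so that $\lim^1$ terms vanish) the filtration is exhaustive and the colimit is well-behaved, and one gets that $H^{p,q}(X)$ is the inverse limit of the $H^{p,q}(X^{(n)})$, with the associated graded of its skeletal filtration recovered by the $E_\infty$-page. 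Concretely I would argue that for fixed $(p,q)$ a class in $E_r^{p,n}$ stabilizes for $r$ large (because the relevant incoming and outgoing differentials eventually come from or land in zero groups, by a connectivity/dimension bound on the cells), so $E_\infty^{p,n}$ is attained at a finite stage, and then identify $E_\infty^{p,n}$ with the subquotient $F^n H^{p,q}(X)/F^{n+1}H^{p,q}(X)$ of the skeletal filtration on $H^{p,q}(X)$ via the maps $i$. The hard part is precisely bookkeeping this convergence cleanly — making sure the filtration on $H^{*,q}(X)$ is Hausdorff and exhaustive so that "converging to" has its literal meaning — rather than anything about the equivariant theory specifically; the $RO(\Z/2)$-grading plays no role here beyond fixing the second index $q$.
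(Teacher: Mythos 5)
Your proposal is correct and follows exactly the route the paper intends: the paper's entire proof is the single remark that ``the construction of the spectral sequence is completely standard'' with a citation to McCleary, i.e.\ the exact couple assembled from the long exact sequences of the pairs $(X^{(n+1)},X^{(n)})$ for fixed $q$. Your additional care about convergence (finite-dimensionality/local finiteness making the filtration eventually constant in each bidegree) is a reasonable elaboration of what the paper leaves implicit, not a departure from its argument.
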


The construction of the spectral sequence is completely standard. See, for example, Proposition 5.3 of \cite{McC}.

It is convenient to plot the $RO(\Z/2)$-graded cohomology in the
plane with $p$ along the horizontal axis and $q$ along the vertical
axis, and this turns out to be a nice way to view the cellular spectral
sequences as well. When doing so, the
differentials on each page of the spectral sequence have bidegree
$(1,0)$ in the plane, but reach farther up the filtration on each
page.  It is important to keep track of at
what stage of the filtration each group arises.  In practice, this can be done by using different colors for group that arise at different stages of the filtration.

It is often quite difficult to determine the effect of all of the
attaching maps in the cell attaching long exact sequences.  If $X$
is locally finite, then the cells can be attached one at a time, in
order of dimension. This simplicity will make it easier to analyze
the differentials in the spectral sequence of the `one at a time'
cellular filtration, even when the precise impact of the attaching maps are not a priori known.

\begin{lem}
 Let $B$ be a $\text{Rep}(\Z/2)$-complex with free cohomology that is built only of cells of dimension strictly less than $p$.  Suppose $X$ is obtained from $B$ by attaching a single $(p,q)$-cell and let $\nu$ denote the generator for the cohomology of $X/B \cong S^{p,q}$. Then after an appropriate change of basis either
\begin{enumerate}
\item all attaching maps to the top cone of $\nu$ are zero (that is, $d(a)=0$ for all $a$ with $a\in H^{*,q_a}(B)$ with $q_a \geq q-1$),
\item the cell attaching `kills' $\nu$ and a free generator in dimension $(p-1,q)$, or
\item all nonzero differentials hit the bottom cone of $\nu$.
\end{enumerate}
\end{lem}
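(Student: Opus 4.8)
The plan is to analyze the single cell-attaching long exact sequence that computes $H^{*,*}(X)$ from $H^{*,*}(B)$ and $H^{*,*}(X/B) = H^{*,*}(S^{p,q})$, reading off the possible behaviors of the connecting differential $d$. Since $X$ is obtained from $B$ by attaching one $(p,q)$-cell, the cofiber sequence $B \hookrightarrow X \to S^{p,q}$ gives, for each fixed $q'$, a long exact sequence relating $H^{*,q'}(X)$, $H^{*,q'}(B)$, and $H^{*,q'}(S^{p,q})$; the latter is a single shifted copy of $H^{*,*}(pt)$, i.e.\ the cone picture of Figure~\ref{fig:pt} translated to have its apex at $(p,q)$. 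The only interesting part of the boundary map is $d \colon H^{*,*}(B) \to \widetilde H^{*,*}(S^{p,q})$, and the target splits as a top cone (classes $\rho^i\tau^j\nu$ with $i,j\geq 0$, living in filtration degrees $q' \geq q$) and a bottom cone (classes $\frac{\nu}{\rho^i\tau^j}$, living in $q' \leq q-2$). First I would record that $d$ is a map of modules over $H^{*,*}(pt)$, so it is determined by where it sends the free generators of $H^{*,*}(B)$; write these generators as $a_1,\dots$ with $a_k$ in degree $(*,q_k)$.

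The key dichotomy is whether any generator of $B$ maps nontrivially into the \emph{top} cone of $\nu$. Suppose some $d(a_k)\neq 0$ lands in the top cone. Because the top cone is a free module on $\nu$ over the polynomial part $\Z/2[\rho,\tau]$ and the ambient ring $H^{*,*}(pt)$ acts on it through that polynomial part, a nonzero value $d(a_k)$ is (up to a unit, which over $\Z/2$ is trivial) of the form $\rho^i\tau^j\nu$. I would then argue that among all generators mapping into the top cone there is one whose image is a $\Z/2[\rho,\tau]$-module generator of the sub-$H^{*,*}(pt)$-module of the top cone generated by the image of $d$ — concretely, pick $a_k$ with $d(a_k) = \rho^{i_0}\tau^{j_0}\nu$ minimal, and observe that since $d$ respects the module structure and the top cone has no $\rho$- or $\tau$-torsion, we must have $i_0 = 0$ (otherwise $\rho$ would not divide enough) — and in fact the cleanest outcome is that after a change of basis we can arrange a generator mapping to $\tau^{j_0}\nu$, then further to $\nu$ itself by dividing, forcing $(p-1,q)$ to be the degree of that generator. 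This is case (2): the differential kills $\nu$ together with that free generator. The change of basis is the standard one: replace each other generator $a_m$ with $d(a_m)$ in the submodule generated by $d(a_k)$ by $a_m - (\text{appropriate } \rho\tau\text{-monomial})\cdot a_k$ to make its differential vanish, which is legitimate precisely because $B$ has free cohomology so these manipulations stay within a basis.

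If, after this change of basis, \emph{no} generator maps into the top cone, then every nonzero $d(a_k)$ lands in the bottom cone, which is case (3) — but I must also allow the possibility that $d$ is entirely zero into the top cone \emph{and} the bottom cone contributions can be normalized away or simply are the only thing left; splitting off case (1) (all differentials out of classes in filtration $q_a \geq q-1$ vanish) versus case (3) (some differential is nonzero, and then necessarily into the bottom cone) is a matter of noting that the bottom cone sits in degrees $q' \leq q-2$, so a class $a$ with $q_a \geq q-1$ can only map into the top cone; hence if case (2) does not occur, classes with $q_a\geq q-1$ have zero differential, and that is exactly case (1). Thus the trichotomy is: either top-cone hits can be arranged away and only bottom-cone hits (possibly none) remain, sorted into (1) and (3), or a top-cone hit survives and produces (2). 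The main obstacle I anticipate is the bookkeeping in the change-of-basis step in case (2): one must verify that dividing the image down to $\nu$ is compatible with the $H^{*,*}(pt)$-module structure without introducing new nonzero differentials elsewhere, and that ``appropriate change of basis'' can be performed simultaneously across all generators — this uses freeness of $H^{*,*}(B)$ essentially, and the precise degree count showing the partner generator sits in bidegree $(p-1,q)$ relies on the fact that multiplication by $\rho$ raises both $p$ and $q$ by one while the generator hitting $\nu$ must be in total degree one less than $\nu$ in the $p$-direction and equal in the $q$-direction.
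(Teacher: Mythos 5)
Your setup is fine: the differential $d\colon H^{*,*}(B)\to\widetilde H^{*+1,*}(S^{p,q})$ is an $H^{*,*}(pt)$-module map, a generator can only hit the top cone if it sits in $p$-degree $p-1$ (so its image is $\tau^{n}\nu$ with no $\rho$), and the change of basis replacing $\omega_i$ by $\tau^{n_i-n_1}\omega_1+\omega_i$ reduces to a single generator $\omega_1$ with $d(\omega_1)=\tau^{n_1}\nu$. Up to this point you match the paper.

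The gap is in what you do next. You claim that one can pass "further to $\nu$ itself by dividing," i.e.\ normalize $d(\omega_1)=\tau^{n_1}\nu$ down to $d=\nu$, so that any top-cone hit automatically yields case (2). No change of basis can do this: $\omega_1$ has a fixed bidegree $(p-1,q+n_1)$, $\tau$ is not invertible in $H^{*,*}(pt)$, and if $n_1>0$ there need not exist any class in bidegree $(p-1,q)$ at all. This is not a bookkeeping issue, as you suggest at the end; the case $n_1>0$ cannot be removed algebraically and must instead be shown to be \emph{topologically impossible}. That is the actual content of the lemma and the paper's proof spends its main argument there: if $n_1>0$, then on $E_\infty$ the class $\nu$ survives, is nonzero, and (after checking that no class such as $\omega_1\theta/\tau^{m}$ can map onto it under $\cdot\rho$, for degree reasons) is not in the image of multiplication by $\rho$. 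The forgetful long exact sequence of Lemma \ref{lemma:forget} then forces $\psi(\nu)\neq 0$ in singular cohomology, and since $\psi(\tau)=1$ this gives $\tau^{n}\nu\neq 0$ for all $n$ --- contradicting the fact that $\tau^{n_1}\nu$ was killed by the differential. Without some substitute for this argument your trichotomy is not established: the scenario $d(\omega_1)=\tau^{n_1}\nu$ with $n_1>0$ falls into none of cases (1)--(3), and your proof gives no reason to exclude it.
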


\begin{proof}
Consider the cellular spectral sequence associated to attaching a single $(p,q)$ cell to $B$.  The effects of attaching such a cell can cause the lower dimensional
generators to hit either the `top cone' or the `bottom cone'
of the newly attached free generator $\nu$ of degree $(p,q)$.

Suppose first that all nonzero differentials hit the top
cone.  Then any free generator $\omega_i$ having a nonzero
differential in the spectral sequence must have degree $(p_i,q_i)$
where $p_i=p-1$ and $q_i \geq q$.  For illustrative purposes, the $E_1$ page of the cellular spectral sequence of an example of this type is pictured in
Figure \ref{fig:attach1pqcell}.  In this example, there are two generators $\omega_1$ and $\omega_2$ with bidegree $(p-1,q_1)$ and one generator $\omega_3$ with bidegree $(p-1,q_i)$.

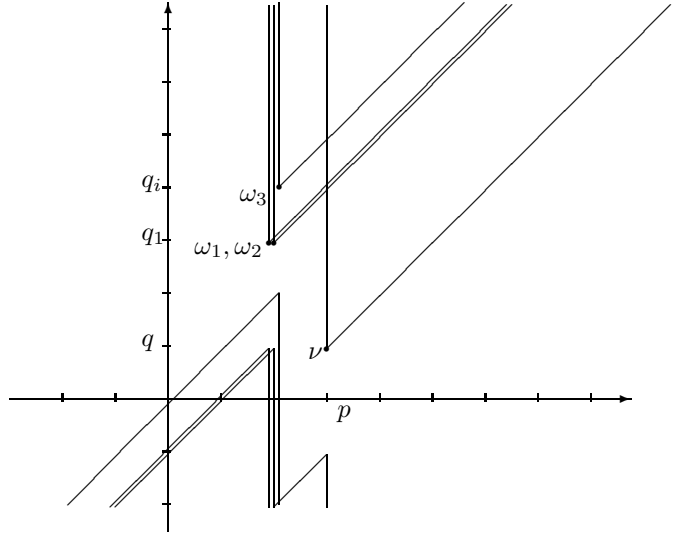
\begin{figure}[htpb]

\centering

\begin{picture}(330,230)(-150,-110)
\put(-110,-50){\vector(1,0){235}}
\put(-50,-100){\vector(0,1){200}}

\put(-8, 30){\line(0,1){70}} \put(-8, 30){\line(1,1){70}} \put(-8,
-10){\line(0,-1){80}} \put(-8, -10){\line(-1,-1){80}} \put(-8,
30){\circle*{2}}
\put(-23,25){$\omega_3$}

\put(-10, 9){\line(0,1){90}} \put(-10, 9){\line(1,1){90}} \put(-10,
-31){\line(0,-1){60}} \put(-10, -31){\line(-1,-1){60}} \put(-10,
9){\circle*{2}}

\put(-40,5){$\omega_1,\omega_2$}

\put(-12, 9){\line(0,1){90}} \put(-12, 9){\line(1,1){90}} \put(-12,
-31){\line(0,-1){60}} \put(-12, -31){\line(-1,-1){60}} \put(-12,
9){\circle*{2}}

\put(10, -31){\line(0,1){130}} \put(10, -31){\line(1,1){130}}
\put(10, -71){\line(0,-1){20}} \put(10, -71){\line(-1,-1){20}}
\put(10, -31){\circle*{2}}

\put(3, -35){$\nu$}

\multiput(-90,-51)(20,0){11}{\line(0,1){3}}
\multiput(-52,-90)(0,20){10}{\line(1,0){3}}

\put(14,-57){$p$} \put(-60,30){$q_i$} \put(-60,10){$q_1$}
\put(-60,-30){$q$}

\end{picture}

\caption{The $E_1$ page of the cellular spectral sequence attaching
a single $(p,q)$-cell to $B$.} \label{fig:attach1pqcell}

\end{figure}

Here, only the generator associated to the $(p,q)$-cell and the
generators with nonzero differentials are shown.  Each of the
$\omega_i$ satisfies $d(\omega_i)=\tau^{n_i} \nu$ for integers
$n_i$.  Relabeling if necessary, we can arrange so that the
$\omega_i$ satisfy $n_1 \leq n_2 \leq \cdots$.

Let $A=\langle\omega_i\rangle$, the $H^{*,*}(pt)$-span of the $\omega_i$'s. A change of basis can be performed
on $A$, after which we may assume $d(\omega_1)=\tau^{n_1}\nu$ and
$d(\omega_i)=0$ for $i>1$. Indeed,
$\{\tau^{n_i-n_1}\omega_1+\omega_i\}$ is a basis for $A$ and
$d(\tau^{n_i-n_1}\omega_1+\omega_i)=\tau^{n_1}\nu$ if $i=1$ and is
zero otherwise. (In effect, the attaching map can `slide' off of all the $\omega_i$
except for the one for which $q_i$ is minimal.)

If $\omega_1$ happens to be in dimension $(p-1,q)$, then the newly
attached cell `kills' $\omega_1$ and $\nu$.
Otherwise the nonzero portion of the spectral sequence is illustrated in Figure \ref{fig:changebasis}.

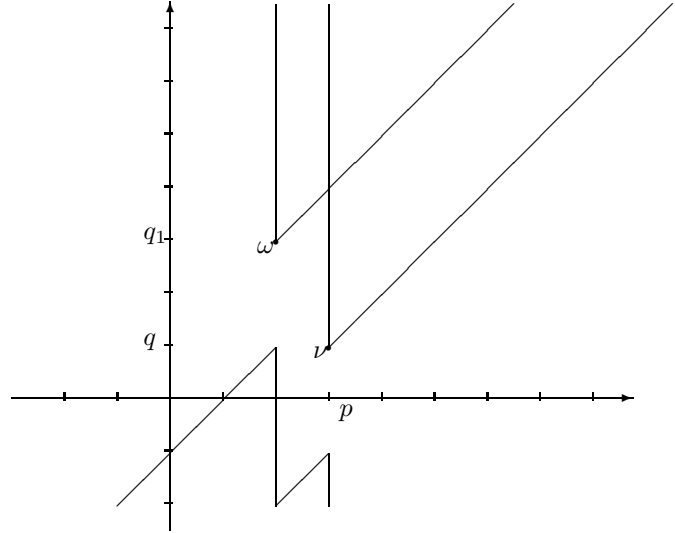
\begin{figure}[htpb]

\centering

\begin{picture}(330,230)(-150,-110)
\put(-110,-50){\vector(1,0){235}}
\put(-50,-100){\vector(0,1){200}}

\put(-10, 9){\line(0,1){90}} \put(-10, 9){\line(1,1){90}}
\put(-10,-31){\line(0,-1){60}} \put(-10, -31){\line(-1,-1){60}}
\put(-10, 9){\circle*{2}} \put(-17, 5){$\omega$}

\put(10, -31){\line(0,1){130}} \put(10, -31){\line(1,1){130}}
\put(10, -71){\line(0,-1){20}} \put(10, -71){\line(-1,-1){20}}
\put(10, -31){\circle*{2}}

\put(4, -35){$\nu$}

\multiput(-90,-51)(20,0){11}{\line(0,1){3}}
\multiput(-52,-90)(0,20){10}{\line(1,0){3}}

\put(14,-57){$p$} \put(-60,10){$q_1$} \put(-60,-30){$q$}

\end{picture}
\caption{The nonzero portion of the same spectral sequence, after a
change of basis.} \label{fig:changebasis}
\end{figure}

After taking cohomology, the spectral sequence collapses, as in Figure \ref{fig:E2page}.

\begin{figure}[htpb]

\centering

\begin{picture}(330,230)(-150,-110)
\put(-110,-50){\vector(1,0){235}}
\put(-50,-100){\vector(0,1){200}}

\put(-10, -30){\line(0,-1){60}} \put(-10, -30){\line(-1,-1){60}}
\put(-10, -70){\circle*{2}} \put(-10, -50){\circle*{2}}

\put(-8,-45){$\omega_1 \frac{\theta}{\tau^m}$}

\put(10, -30){\line(0,1){20}} \put(10,-10){\line(1,1){110}} \put(10,
-30){\line(1,1){130}} \put(10, -70){\line(0,-1){20}} \put(10,
-70){\line(-1,-1){20}} \put(10, -30){\circle*{2}}

\put(14, -35){$\nu$}

\multiput(-90,-51)(20,0){11}{\line(0,1){3}}
\multiput(-52,-90)(0,20){10}{\line(1,0){3}}

\put(14,-57){$p$} \put(-60,10){$q_1$} \put(-60,-30){$q$}

\end{picture}
\caption{The $E_2=E_\infty$ page of the above spectral sequence.}
\label{fig:E2page}
\end{figure}
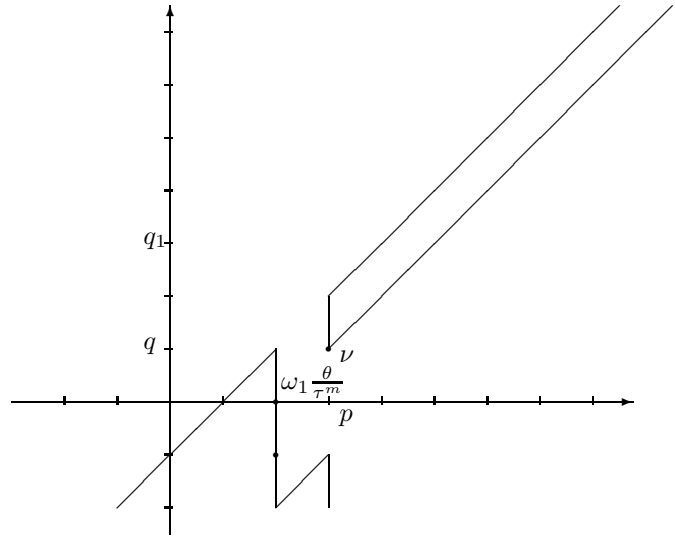

There is a class $\omega_1 \frac{\theta}{\tau^m}$ that, potentially,
could satisfy $\rho \cdot \omega_1 \frac{\theta}{\tau^m}=\nu$.
However, for degree reasons, $\rho \cdot \omega_1
\frac{\theta}{\tau^{m+1}}=0$ and since $\rho$ and $\tau$ commute, $\rho \cdot \omega_1 \frac{\theta}{\tau^m}=0$.  This
means $\nu$ determines a nonzero class in $H^{*,*}(X)$ that is not
in the image of $\cdot \rho$.  If $B$ is based, then $X$ is based,
and so, by the forgetful long exact sequence, $\nu$ determines a
nonzero class in non-equivariant cohomology. Then since $\tau$ maps to $1$
in non-equivariant cohomology, $\tau^n \nu$ is nonzero for all $n$.  But,
as the picture indicates, $\tau^n \nu$ is zero for large enough
$n$.  This contradiction implies that there could not have been any
nonzero differentials hitting the top cone of $\nu$.

This argument is independent of whether there are any differentials
hitting the bottom cone, and so there simply cannot be any nonzero
differentials on the top cone.\qed
\end{proof}

Differentials hitting the bottom cone can cause a shifting in degree of the cohomology generators.

\begin{thm} Suppose $X$ is a $\text{Rep}(\Z/2)$-complex formed by attaching a single $(p,q)$-cell to a space $B$.  Suppose also that $\tilde{H}^{*,*}(B)$ is a free $H^{*,*}(pt)$-module with a single generator $\omega$ of dimension strictly less than $p$.  Then $H^{*,*}(X)$ is a free $H^{*,*}(pt)$-module.  In particular, one of the following must hold:
\begin{enumerate}
\item $H^{*,*}(X) \cong H^{*,*}(pt)$.

\item $H^{*,*}(X) \cong H^{*,*}(B)\oplus \Sigma^\nu H^{*,*}(pt)$, where the degree of $\nu$ is $(p,q)$.

\item $H^{*,*}(X)$ is free with two generators $a$ and $b$.
\end{enumerate}

In (3) above, the degrees of the generators $a$ and $b$ are
$(p-n-1,q-n-1)$ and $(p,q-m-1)$ where
$d(\omega)=\frac{\theta}{\rho^n\tau^m}\nu$.

\end{thm}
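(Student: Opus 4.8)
The plan is to run the cellular spectral sequence of the Proposition above for the two-stage filtration $B\subseteq X$. Since a single cell is attached it degenerates to the long exact sequence of the cofiber sequence $B\hookrightarrow X\to X/B\cong S^{p,q}$, whose only differential is the connecting map $d\colon\tilde H^{*,*}(B)\to\tilde H^{*+1,*}(X/B)$. Being a map of free $H^{*,*}(pt)$-modules, $d$ is determined by $d(\omega)\in\tilde H^{p_\omega+1,\,q_\omega}(S^{p,q})$, where $\deg\omega=(p_\omega,q_\omega)$ with $p_\omega<p$. That target group is a single bigraded piece of $H^{*,*}(pt)$ based at $(p,q)$; since $p_\omega+1\le p$, inspection of Figure~\ref{fig:pt} shows it can be nonzero only if it lies in the top cone of $\nu$ — which forces $p_\omega=p-1$ and $d(\omega)=\tau^k\nu$ for some $k\ge0$ — or in the bottom cone of $\nu$ — which forces $d(\omega)=\frac{\theta}{\rho^n\tau^m}\nu$ and $\deg\omega=(p-1-n,\,q-2-n-m)$ for some $n,m\ge0$. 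The previous Lemma (whose argument applies verbatim, since $B$, hence $X$, is based) excludes $d(\omega)=\tau^k\nu$ with $k\ge1$, so exactly three possibilities remain: $d(\omega)=0$, $d(\omega)=\nu$, and $d(\omega)=\frac{\theta}{\rho^n\tau^m}\nu$.

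The first two are disposed of at once. If $d(\omega)=\nu$, then $d$ is an isomorphism of free modules, so $\tilde H^{*,*}(X)=0$ and $H^{*,*}(X)\cong H^{*,*}(pt)$ — conclusion (1). If $d(\omega)=0$, the long exact sequence breaks into short exact sequences whose quotient term $\tilde H^{*,*}(B)$ is free, hence projective; the sequences split, giving $\tilde H^{*,*}(X)\cong H^{*,*}(pt)\{\omega\}\oplus H^{*,*}(pt)\{\nu\}$, that is, $H^{*,*}(X)\cong H^{*,*}(B)\oplus\Sigma^\nu H^{*,*}(pt)$ — conclusion (2).

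The substance lies in the third case, $d(\omega)=\frac{\theta}{\rho^n\tau^m}\nu$, which should yield conclusion (3). I would first read off $\ker d$ and $\coker d$ from the module structure of $H^{*,*}(pt)$: multiplication by $\frac{\theta}{\rho^n\tau^m}$ annihilates the whole bottom cone, has kernel the ideal $(\rho^{n+1},\tau^{m+1})$ on the top cone, and has image a finite ``corner rectangle'' inside the bottom cone. Thus $\coker d$ is generated by the image of $\nu$, while $\ker d$ is spanned over $H^{*,*}(pt)$ by the bottom cone on $\omega$ together with $\rho^{n+1}\omega$ and $\tau^{m+1}\omega$ — and in fact $\rho^{n+1}\omega$ and $\tau^{m+1}\omega$ already generate $\ker d$, since multiplying either of them by the bottom cone recovers the bottom cone on $\omega$. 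Choosing classes $a,b\in\tilde H^{*,*}(X)$ lifting $\tau^{m+1}\omega$ and $\rho^{n+1}\omega$ (these lifts are unique, $\coker d$ vanishing in their bidegrees $(p-n-1,q-n-1)$ and $(p,q-m-1)$, which are the ones named in the statement), the long exact sequence shows $\tilde H^{*,*}(X)$ is generated by $a$, $b$, and the image $\tilde\nu$ of $\nu$. One then computes $\dim\tilde H^{p,q}(X)=2$ and observes that $\rho^{n+1}a$ and $\tau^{m+1}b$ both map to the element $\rho^{n+1}\tau^{m+1}\omega$ of $\ker d$, so everything reduces to the hidden extension $\rho^{n+1}a+\tau^{m+1}b=\nu$ (and not $0$), which makes $\rho^{n+1}a$ and $\tau^{m+1}b$ independent and absorbs $\tilde\nu$.

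That last assertion is the main obstacle: the associated graded of the filtration is visible, but freeness of $\tilde H^{*,*}(X)$ itself — rather than of its associated graded — requires pinning down the hidden extensions. I would settle it with the forgetful long exact sequence of Lemma~\ref{lemma:forget}, exactly as in the proof of the previous Lemma. A short computation with the nonequivariant cofiber sequence identifies $\tilde H^{*}_{\mathrm{sing}}(X)$ as $\Z/2$ concentrated in degrees $p-1-n$ and $p$; since multiplication by $\rho$ annihilates bottom-cone classes for degree reasons while $\tau\mapsto 1$ under the forgetful map $\psi$, feeding this information through Lemma~\ref{lemma:forget} forces $\psi(b)\neq 0$ and hence the extension $\rho^{n+1}a+\tau^{m+1}b=\nu$. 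Once surjectivity of $\Sigma^aH^{*,*}(pt)\oplus\Sigma^bH^{*,*}(pt)\to\tilde H^{*,*}(X)$ is established, injectivity follows by comparing Poincar\'e series bidegree by bidegree — both sides being bounded below and finite in each bidegree — which is exactly the content of the $E_\infty$-picture for this attachment.
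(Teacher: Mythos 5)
Your proof is correct and follows essentially the same route as the paper's: the long exact sequence of the cofiber sequence $B \inc X \fib S^{p,q}$, the trichotomy on $d(\omega)$ supplied by the preceding lemma, and the forgetful sequence to resolve the hidden extension $j^*(\nu)=\rho^{n+1}a+\tau^{m+1}b$ in case (3). (Your exponent $\tau^{m+1}$ is the one consistent with $b$ having degree $(p,q-m-1)$; the paper writes $\tau^{m}b$ at that step, an off-by-one slip, and your explicit description of $\ker d$ and the final Poincar\'e-series comparison only add detail the paper leaves implicit.)
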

\begin{proof}  Under these hypotheses, there is a cofiber sequence of the form $B \stackrel{i}\inc X \stackrel{j}\fib S^{p,q}$.  Denote by $\nu$ the generator of $H^{*,*}(S^{p,q})$.

If $d(\omega)=\nu$ then $(1)$ holds and $H^{*,*}(X)$ is free. If
$d(\omega)=0$, then $(2)$ holds and again $H^{*,*}(X)$ is free. The
remaining case is $d(\omega)\neq0$.  By the previous lemma, $d(\omega)$ is in the bottom cone of $\nu$. That is $d(\omega)=\frac{\theta}{\rho^n\tau^m}\nu$ for some $n$ and
$m$.  Recall that $\nu$ has dimension $(p,q)$ and so $\omega$ has
dimension $(p-n-1, q-n-m-2)$.  The $E_1$ page of the cellular
spectral sequence is given in Figure \ref{fig:E1bottom}.

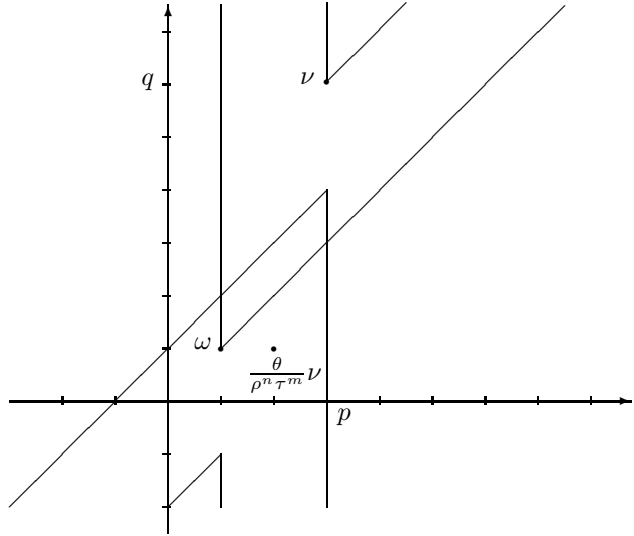
\begin{figure}[htpb]

\centering

\begin{picture}(330,230)(-150,-110)
\put(-110,-50){\vector(1,0){235}}
\put(-50,-100){\vector(0,1){200}}

\put(-30, -30){\line(0,1){130}} \put(-30, -30){\line(1,1){130}}
\put(-30, -70){\line(0,-1){20}} \put(-30, -70){\line(-1,-1){20}}
\put(-30, -30){\circle*{2}} \put(-40,-30){$\omega$}

\put(10, 71){\line(0,1){30}} \put(10, 71){\line(1,1){30}} \put(10,
30){\line(0,-1){120}} \put(10, 30){\line(-1,-1){120}} \put(10,
71){\circle*{2}} \put(0,70){$\nu$} \put(-10, -30){\circle*{2}}
\put(-20,-42){$\frac{\theta}{\rho^n\tau^m}\nu$}

\multiput(-90,-51)(20,0){11}{\line(0,1){3}}
\multiput(-52,-90)(0,20){10}{\line(1,0){3}}

\put(14,-57){$p$} \put(-60,70){$q$}

\end{picture}
\caption{The $E_1$ page of the cellular spectral sequence with a
single nonzero differential hitting the bottom cone of an attached
$(p,q)$-cell.} \label{fig:E1bottom}
\end{figure}

After taking cohomology, the spectral sequence collapses, and what
remains is pictured in Figure \ref{fig:E2bottom}.

\begin{figure}[htpb]

\centering

\begin{picture}(330,230)(-150,-110)
\put(-110,-50){\vector(1,0){235}}
\put(-50,-100){\vector(0,1){200}}

\put(-30, 30){\line(0,1){70}} \put(-30, 30){\line(1,1){70}}
\put(-30, 30){\circle*{2}} \put(-40,30){$a$} \put(10,
10){\line(1,1){90}} \put(10, 10){\circle*{2}} \put(0, 10){$b$}
\put(10, 30){\line(1,1){70}} \put(10, 30){\circle*{2}} \put(0,
30){$b_1$} \put(10, 50){\line(1,1){50}} \put(10, 50){\circle*{2}}
\put(0, 50){$b_2$} \put(-30, -70){\line(0,-1){20}} \put(-30,
-70){\line(-1,-1){20}}

\put(10, 71){\line(0,1){30}} \put(10, 71){\line(1,1){30}} \put(10,
71){\circle*{2}} \put(-3, 71){$b_m$} \put(10, -30){\line(0,-1){60}}
\put(10, -30){\circle*{2}} \put(-30, -10){\line(-1,-1){80}}
\put(-30, -10){\circle*{2}} \put(-30, -30){\line(-1,-1){60}}
\put(-30, -30){\circle*{2}} \put(-30, -50){\line(-1,-1){40}}
\put(-30, -50){\circle*{2}} \put(-10, -48){\line(-1,-1){40}}
\put(-10, -48){\line(0,-1){40}} \put(-10, -48){\circle*{2}}

\multiput(-90,-51)(20,0){11}{\line(0,1){3}}
\multiput(-52,-90)(0,20){10}{\line(1,0){3}}

\put(14,-57){$p$} \put(-60,70){$q$}

\end{picture}
\caption{The $E_2=E_\infty$ page of the cellular spectral sequence
with a single nonzero differential hitting the bottom cone of an
attached $(p,q)$-cell.} \label{fig:E2bottom}
\end{figure}

Let $a$ be the generator in degree $(p-n-1, q-n-1)$ and $b$ the generator in dimension
$(p,q-m-1)$. For degree reasons, $b$ is not in the image
of $\cdot \rho$ and so determines a nonzero class in non-equivariant
cohomology.  Thus, $\tau^i b$ is nonzero for all $i$, and so we have
that $b_i=\tau^i b$. In particular, $\rho^{n+1} a$ and $\tau^m b$
generate $H^{p,q}(X)$.  Consider the portion of the long exact
sequence associated to the cofiber sequence $B \stackrel{i}\inc X
\stackrel{j}\fib S^{p,q}$ given below:

$$\xymatrix{\cdots \ar[r] & H^{p,q}(S^{p,q}) \ar[r]^{j^*} & H^{p,q}(X) \ar[r]^{i^*} & H^{p,q}(B) \ar[r] & 0}$$

Since $i^*(\rho^{n+1} a)=i^*(\tau^m b)=\rho^{n+1}\tau^m\omega$,
exactness implies that $j^*(\nu)=\rho^{n+1} a + \tau^m b$.  Also
$j^*$ is an $H^{*,*}(pt)$-module homomorphism, and so
$j^*(\frac{\theta}{\rho^{n+1}}\nu)=\theta a$ and
$j^*(\frac{\theta}{\tau^m}\nu)=\theta b$.  In particular, we can
create a map $f$ from a free module with generators $\alpha$ and
$\beta$ in degrees $(p-n-1,q-n-1)$ and $(p,q-m-1)$ respectively
to $\tilde{H}^{p,q}(X)$ with $f(\alpha)=a$ and $f(\beta)=b$.  This
$f$ is an isomorphism.

\end{proof}

\begin{thm}[Freeness Theorem]
\label{thm:freeness} If $X$ is a connected, locally finite, finite
dimensional $\text{Rep}(\Z/2)$-complex, then
$H^{*,*}(X;\underline{\Z/2})$ is free as a
$H^{*,*}(pt;\underline{\Z/2})$-module.
\end{thm}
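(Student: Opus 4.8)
The plan is to induct on the cell structure of $X$, building it up one cell at a time and tracking freeness at each stage. Since $X$ is connected, locally finite, and finite-dimensional, we may order its cells by dimension, attaching one $(p_\alpha,q_\alpha)$-cell at a time; at each stage we obtain a space $B_{k+1}$ from $B_k$ by a single cell attachment, with $B_0$ a point (or, after collapsing a contractible subcomplex, we may assume reduced cohomology $\tilde H^{*,*}$ is the relevant object). The base case is immediate: $H^{*,*}(pt)$ is free on one generator. For the inductive step I would like to invoke the preceding Theorem, which handles attaching a single $(p,q)$-cell to a space whose reduced cohomology is free. The difficulty is that the preceding Theorem was stated only for $B$ having a \emph{single} free generator in dimension $< p$, whereas after several attachments $\tilde H^{*,*}(B_k)$ will in general be free on many generators, and some of them may have dimension $\geq p$ where $p$ is the dimension of the next cell.

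First I would generalize the key Lemma and the preceding Theorem to the case where $B$ is any $\text{Rep}(\Z/2)$-complex with free cohomology built of cells of dimension $< p$ (the Lemma already allows this) and we attach a $(p,q)$-cell. The point is that in the cellular spectral sequence of the one-at-a-time filtration, the only differentials into the new generator $\nu$ emanate from classes $a$ with $a \in H^{*,q_a}(B)$, $q_a \geq q-1$, living in filtration $p-1$; these come from free generators $\omega_i$ of $B$ together with their $H^{*,*}(pt)$-multiples. The Lemma says that after a change of basis, either all such differentials vanish (case 1), or the cell kills $\nu$ together with a free generator in degree $(p-1,q)$ (case 2), or all nonzero differentials hit the bottom cone of $\nu$ (case 3). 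In case 1, $H^{*,*}(X) \cong H^{*,*}(B) \oplus \Sigma^{(p,q)} H^{*,*}(pt)$, manifestly free. In case 2, the spectral sequence argument shows $H^{*,*}(X) \cong H^{*,*}(B')$ where $B'$ has one fewer free generator, still free. Case 3 is the substantive one: running the same $E_2 = E_\infty$ computation and the same long-exact-sequence argument with the cofiber sequence $B \hookrightarrow X \twoheadrightarrow S^{p,q}$ as in the preceding Theorem, each $\omega_i$ with $d(\omega_i) = \frac{\theta}{\rho^{n_i}\tau^{m_i}}\nu$ gets replaced by a new generator $a_i$ in degree $(p_i - n_i - 1, q_i - n_i - 1)$, and $\nu$ contributes a generator $b$ in degree $(p, q - m - 1)$ (with $m$ determined by the minimal-$q_i$ differential); the classes not participating in differentials survive unchanged. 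The forgetful long exact sequence (Lemma \ref{lemma:forget}) is used exactly as before to rule out the would-be hidden $\rho$-extension and to show the $\tau$-multiples of $b$ are all nonzero.

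The one subtlety is that a single attached cell might receive differentials from several distinct free generators $\omega_i$ of $B$ simultaneously; here I would do the change of basis of the preceding Lemma first, so that at most one $\omega_i$ per ``$\tau$-tower'' carries the differential, and then observe that the remaining bookkeeping is exactly as in case 3 of the preceding Theorem, performed once for each surviving differential-source. Because the attached cell is a single $(p,q)$-cell and $B$ has no cells of dimension $\geq p$, there is no room for a differential out of $\nu$ itself on any page, so the analysis is genuinely one-sided. Assembling: at every stage $B_k \rightsquigarrow B_{k+1}$, freeness of $\tilde H^{*,*}(B_k)$ implies freeness of $\tilde H^{*,*}(B_{k+1})$, and since $X$ is finite-dimensional and locally finite the process terminates (or, more precisely, any given bidegree of $H^{*,*}(X)$ is detected at a finite stage, so $H^{*,*}(X) = \colim_k H^{*,*}(B_k)$ is free).

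The main obstacle I anticipate is purely the organization of case 3 when multiple generators of $B$ interact with the single new cell: one must be careful that the change of basis on the span $A = \langle \omega_i \rangle$ is compatible with the pre-existing module structure (i.e., that replacing $\omega_i$ by $\tau^{n_i - n_1}\omega_1 + \omega_i$ does not disturb freeness of the complement), and that the resulting generators $a_i$, $b$ together with the untouched generators of $B$ really do form a free basis — this requires producing an explicit surjection from a free module and checking injectivity degree-by-degree using the $\rho$- and $\tau$-action, exactly as in the map $f$ at the end of the preceding Theorem's proof. Everything else is a clean induction.
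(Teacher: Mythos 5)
Your overall strategy is the paper's: induct over a one-cell-at-a-time filtration, use the change-of-basis Lemma to rule out top-cone differentials, and re-run the cofiber-sequence argument to exhibit a free basis after each attachment. The genuine gap is in your inductive step, which only treats the case where the new cell has dimension strictly greater than every cell of $B$: you explicitly assume ``$B$ has no cells of dimension $\geq p$'' and propose generalizing the Lemma and the two-generator Theorem only to that setting. This hypothesis fails the moment $X$ has two cells of the same dimension $p$ --- the generic situation for Grassmannians (e.g.\ $G_2(\R^{4,1})$ has two $(2,1)$-cells). When the second $p$-cell $\nu'$ is attached, $B$ already contains a free generator in first degree $p$ coming from the earlier $p$-cell (possibly shifted in $q$ by intervening differentials), and nothing in your outline rules out a differential from that generator into the $H^{*,*}(pt)$-module generated by $\nu'$. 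The paper closes this by refining the ordering: cells of equal dimension $p$ are attached in order of \emph{increasing} twist $q$. Then the generator from an earlier $(p,q'')$-cell with $q''\leq q$ can only have been shifted \emph{down} in $q$-degree by previous attachments, so any differential out of it lands in bidegree $(p+1,q''')$ with $q'''\leq q$; but in first degree $p+1$ the free module on $\nu'$ is concentrated in second degrees $\geq q+1$ (the $\rho\tau^{j}\nu'$ part of the top cone), so the target group vanishes and the differential is automatically zero. Without this secondary ordering and degree check, your induction does not close.

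Two smaller points. First, your degree formula for the replacement generator, $(p_i-n_i-1,\,q_i-n_i-1)$, conflates the degree of $\omega_i$ with the degree $(p,q)$ of $\nu$; the new generator sits at $(p-n_i-1,\,q-n_i-1)$, i.e.\ at the same first degree as $\omega_i$ but shifted up in $q$. Second, for a locally finite, finite-dimensional complex with infinitely many cells, $H^{*,*}(X)$ is not the colimit of the $H^{*,*}(B_k)$ (cohomology maps \emph{into} the tower); one needs an inverse-limit/$\lim^1$ argument or the observation that each bidegree stabilizes. The remainder of your outline --- the ramp reduction so that at most one generator per $\tau$-tower carries a bottom-cone differential, and the use of the forgetful sequence to exclude top-cone differentials --- matches the paper.
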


\begin{proof}

Since $X$ is locally finite, the cells can be attached one at a
time.  Order the cells $\alpha_1, \alpha_2, \dots$ so that their
degrees satisfy $p_i \leq p_j$ if $i\leq j$ and $q_i \leq q_j$ if
$p_i=p_j$ and $i \leq j$. We can proceed by induction over the
spaces in the `one-at-a-time' cell filtration $X^{(0)} \subseteq \cdots \subseteq X^{(n)}
\subseteq \cdots \subseteq X$, with the base case obvious since $X$
is connected.

First, suppose that $H^{*,*}(X^{(n)})$ is a free
$H^{*,*}(pt)$-module and that $X^{(n+1)}$ is obtained by attaching a
single $(p,q)$-cell and that $X^{(n)}$ has no $p$-cells.  Denote by
$\nu$ the free generator of $H^{*,*}(X^{(n+1)}/X^{(n)}) \cong
H^{*,*}(S^{p,q})$.  Consider the spectral sequence of the filtration
$X^{(n)} \subseteq X^{(n+1)}$.  An example is pictured below in Figure
\ref{fig:free1} to aid in the discussion.

\begin{figure}[htpb]

\centering

\begin{picture}(330,230)(-150,-110)

\put(-10, -10){\line(0,1){110}} \put(-10, -10){\line(1,1){110}}
\put(-10, -50){\line(0,-1){40}} \put(-10, -50){\line(-1,-1){40}}
\put(-10, -10){\circle*{2}}

\put(-8, 50){\line(0,1){50}} \put(-8, 50){\line(1,1){50}} \put(-8,
10){\line(0,-1){100}} \put(-8, 10){\line(-1,-1){100}} \put(-8,
50){\circle*{2}}

\put(-10, 70){\line(0,1){30}} \put(-10, 70){\line(1,1){30}}
\put(-10, 30){\line(0,-1){120}} \put(-10, 30){\line(-1,-1){120}}
\put(-10, 70){\circle*{2}}

\put(-12, -30){\line(0,1){130}} \put(-12, -30){\line(1,1){130}}
\put(-12, -70){\line(0,-1){20}} \put(-12, -70){\line(-1,-1){20}}
\put(-12, -30){\circle*{2}}

\put(-12, -50){\line(0,1){150}} \put(-12, -50){\line(1,1){150}}
\put(-12, -50){\circle*{2}}

\put(-28, -50){\line(0,1){150}} \put(-28, -50){\line(1,1){150}}
\put(-28, -50){\circle*{2}}

\put(-48, -70){\line(0,1){150}} \put(-48, -70){\line(1,1){150}}
\put(-48, -70){\circle*{2}}

\put(-90, -90){\line(0,1){170}} \put(-90, -90){\line(1,1){170}}
\put(-90, -90){\circle*{2}}

\put(10, 31){\line(0,1){70}} \put(10, 31){\line(1,1){70}} \put(10,
-10){\line(0,-1){80}} \put(10, -10){\line(-1,-1){80}} \put(10,
31){\circle*{2}} \put(17,30){$\nu$} \put(15,-12){$\theta\nu$}

\put(-23,70){$\alpha'$}
\put(-23,50){$\alpha$}
\put(-24,-10){$\omega'''$}
\put(-23,-30){$\omega''$}
\put(-23,-50){$\omega_n$}
\put(-43,-50){$\omega'$}
\put(-63,-70){$\omega_1$}



\end{picture}

\caption{The spectral sequence of a filtration for attaching a
single $(p,q)$-cell to a space with free cohomology.}
\label{fig:free1}
\end{figure}
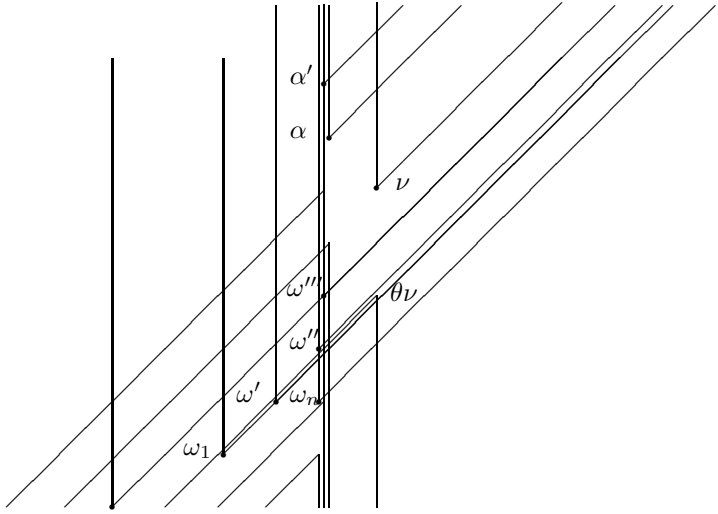

As before, a change of basis allows us to focus on a subset $\omega_1, \dots,
\omega_n$ of the free generators of $H^{*,*}(X^{(n)})$ whose
differentials hit the bottom cone of $\nu$ and that satisfy
\begin{enumerate}
\item $d(\omega_i) \neq 0$ for all $i$,
\item $|\omega_i| > |\omega_j|$ when $i > j$,
\item $|\omega_i^G| > |\omega_j^G|$ when $i > j$,
\end{enumerate}

\noindent and all other basis elements have zero differentials to
the bottom cone of $\nu$.  This is similar to what is referred to in
\cite{FL} as a ramp of length $n$.  Also, we can change the basis
again so that there is only one free generator, $\alpha$, of
$H^{*,*}(X^{(n)})$ with a nonzero differential to the top cone of
$\nu$.  Then, after this change of basis, the nonzero portion of the
spectral sequence of the filtration looks like the one in Figure
\ref{fig:free2}
\begin{figure}[htpb]

\centering

\begin{picture}(330,230)(-150,-110)

\put(-8, 50){\line(0,1){50}} \put(-8, 50){\line(1,1){50}} \put(-8,
10){\line(0,-1){100}} \put(-8, 10){\line(-1,-1){100}} \put(-8,
50){\circle*{2}} \put(-6,45){$\alpha$}

\put(-12, -50){\line(0,1){150}} \put(-12, -50){\line(1,1){150}}
\put(-12, -50){\circle*{2}} \put(-20,-60){$\omega_{n}$}

\put(-48, -70){\line(0,1){150}} \put(-48, -70){\line(1,1){150}}
\put(-48, -70){\circle*{2}} \put(-55,-80){$\omega_{n-1}$}

\put(-90, -90){\line(0,1){170}} \put(-90, -90){\line(1,1){170}}
\put(-90, -90){\circle*{2}}

\put(10, 31){\line(0,1){70}} \put(10, 31){\line(1,1){70}} \put(10,
-10){\line(0,-1){80}} \put(10, -10){\line(-1,-1){80}} \put(10,
31){\circle*{2}} \put(17,30){$\nu$} \put(15,-12){$\theta\nu$}



\end{picture}

\caption{The nonzero portion of the above spectral sequence, after a
change of basis.} \label{fig:free2}
\end{figure}
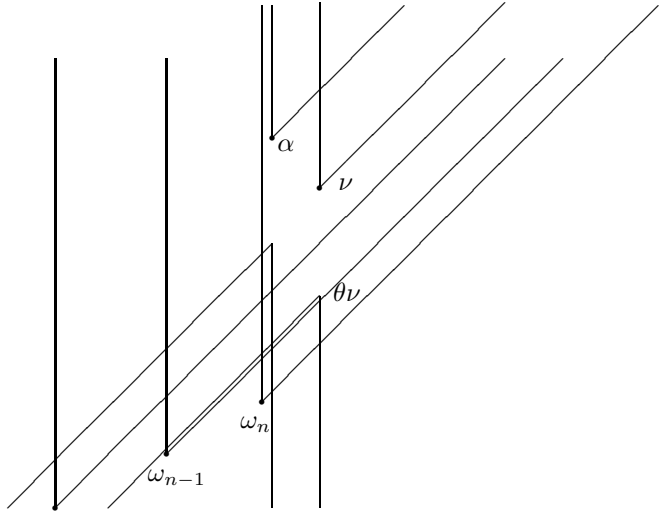

As above, $\alpha$ cannot support a nonzero differential, and we can see that each of the
$\omega_i$'s will shift up in $q$-degree and $\nu$ will shift down in $q$-degree.  That is, the $\omega_i$'s and $\nu$ each give rise to free generators in the cohomology of $H^{*,*}(X^{(n+1)})$, but in different bidegree than their predecessors. Thus, $H^{*,*}(X^{(n+1)})$ is again free.

Now suppose that $X^{(n+1)}$ is obtained by attaching a $(p,q)$-cell
$\nu'$ and that $X^{(n)}$ has a single $p$-cell $\nu$ already.  Then
by the previous case, the generator for $\nu$ was either shifted
down, killed off, or was left alone at the previous stage.  In any
case, because of our choice of ordering of the cells, the generator
for $\nu$ cannot support a differential to the generator for $\nu'$.
Thus, the only nonzero differentials to $\nu'$ are from strictly
lower dimensional cells.  Thus, we are reduced again to the previous
case and $H^{*,*}(X^{(n+1)})$ is free.  By induction, $H^{*,*}(X)$
is free. 

\end{proof}


\section{Real Projective Spaces and Grassmann Manifolds}
\label{sec:RPs}

In this section, $G=\Z/2$ exclusively, and the coefficient Mackey will always be $M = \underline{\Z/2}$ and will be suppressed from the notation.

Since each representation $\R^{p,q}$ has a linear $\Z/2$-action,
there is an induced action of $\Z/2$ on $G_n(\R^{p,q})$, the \bf
Grassmann manifold \rm of $n$-dimensional linear subspaces of
$\R^{p,q}$.  These Grassmann manifolds play a central role in the
classification of equivariant vector bundles, and so it is important
to understand their cohomology.  As a special case we have the real
projective spaces $\bP(\R^{p,q})=G_1(\R^{p,q})$.

The usual Schubert cell decomposition endows the Grassmann manifolds
with a $\text{Rep}(\Z/2)$-cell structure.  However, the number of
twists in each cell is dependent upon the flag of subrepresentations
of $\R^{p,q}$ that is chosen. A \bf flag symbol \rm $\varphi$ is a sequence
of integers $\varphi = (\varphi_1, \dots, \varphi_q)$ satisfying
$1\leq \varphi_1 < \dots < \varphi_q \leq q$.  A flag symbol $\varphi$ determines a flag of subrepresenations $V_0=0 \subset V_1 \subset \cdots
\subset V_p=\R^{p,q}$ satisfying
$V_{\varphi_i}/V_{\varphi_i-1}=\R^{1,1}$ for all $i=1, \dots, q$,
and all other quotients of consecutive terms are $\R^{1,0}$. For concreteness, we also require that $V_{i}$ is obtained from $V_{i-1}$ by adjoining a coordinate basis
vector.  For example, there is a
flag in $\R^{5,3}$ determined by the flag symbol $\varphi=(1,3,4)$
of the form $\R^{0,0} \subset \R^{1,1} \subset \R^{2,1} \subset
\R^{3,2} \subset \R^{4,3} \subset \R^{5,3}$.

A \bf Schubert symbol \rm $\sigma = (\sigma_1, \dots , \sigma_n)$ is a sequence of integers such that $1\leq\sigma_1 <
\sigma_2 < \dots < \sigma_n\leq p$. Given a Schubert symbol $\sigma$
and a flag symbol $\varphi$, let $e(\sigma,\varphi)$ be the set of
planes  $\ell \in G_n(\R^{p,q})$ for which $\dim(\ell \cap
V_{\sigma_i}) = 1+\dim(\ell \cap V_{\sigma_i-1})$, where $V_0
\subset \cdots \subset V_n$ is the flag determined by $\varphi$.
Then $e(\sigma,\varphi)$ is the interior of a cell $D(W)$ for some
representation $W$.  The dimension of the cell is determined by the
Schubert symbol $\sigma$ just as in nonequivariant topology, but the
number of twists depends on both $\sigma$ and the flag symbol
$\varphi$.

For example, consider $G_2(\R^{5,3})$, $\sigma = (3,5)$, and
$\varphi = (1,3,4)$.  Then $e(\sigma,\varphi)$ consists of planes
$\ell$ which have a basis with echelon form given by the matrix
below.

$$
\begin{array}{cc}
& \begin{array}{ccccc} \phantom{(}- & + & - & - & +\phantom{)}\end{array}\\
& \left( \begin{array}{ccccc} \ast & \ast & 1 & 0 & 0  \\
\ast & \ast & 0 & \ast & 1\end{array} \right)

\end{array}
$$

Here, the action of $\Z/2$ on the columns, as determined by
$\varphi$, has been indicated by inserting the appropriate signs
above the matrix.  After acting, this becomes the following.

$$
\begin{array}{cc}
& \begin{array}{ccccc} \phantom{(}- & + & - & - & +\phantom{)}\end{array}\\
& \left( \begin{array}{ccccc} -\ast & \ast & -1 & 0 & 0  \\
-\ast & \ast & 0 & -\ast & 1\end{array} \right)

\end{array}
$$

We require the last nonzero entry of each row to be 1, and so we
scale the fisrt row by $-1$.

$$
\begin{array}{cc}
& \begin{array}{ccccc} \phantom{(}- & + & - & - & +\phantom{)}\end{array}\\
& \left( \begin{array}{ccccc} \ast & -\ast & 1 & 0 & 0  \\
-\ast & \ast & 0 & -\ast & 1\end{array} \right)

\end{array}
$$

There are five coordinates which can be any real numbers, three
of which the $\Z/2$ action of multiplication by -1, so this is a
$(5,3)$-cell.  Through a similar process, we can obtain a cell
structure for $G_n(\R^{p,q})$ given any flag $\varphi$.  The type of
cell determined by the Schubert symbol $\sigma$ and the flag
$\varphi$ is given by the following proposition.  Here,
$\underline{\sigma_i} = \{1, \dots, \sigma_i\}$ and $\sigma(i) =
\{\sigma_1, \dots, \sigma_i\}$.

\begin{prop}
\label{prop:schubertcells} Let $\sigma = (\sigma_1, \dots,
\sigma_n)$ be a Schubert symbol and $\varphi = (\varphi_1, \dots,
\varphi_q)$ be a flag symbol for $\R^{p,q}$.  The cell
$e(\sigma,\varphi)$ of $G_n(\R^{p,q})$ is of dimension $(a,b)$ where
$a=\sum_{i=1}^n (\sigma_i-i)$ and $b=\sum_{\sigma_i \in \varphi}
|\underline{\sigma_i}\setminus (\varphi \cup
\sigma(i))|+\sum_{\sigma_i \notin \varphi} |(\underline{\sigma_i}
\cap \varphi)\setminus \sigma(i)|$.

\end{prop}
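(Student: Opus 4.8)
The plan is to put explicit echelon-form coordinates on $e(\sigma,\varphi)$ and read off directly which coordinates the $\Z/2$-action negates. Identify the flag symbol $\varphi=(\varphi_1,\dots,\varphi_q)$ with its underlying set $\{\varphi_1,\dots,\varphi_q\}\subseteq\{1,\dots,p\}$, so that the standard coordinate $e_j$ of $\R^{p,q}$ spans a copy of $\R^{1,1}$ exactly when $j\in\varphi$; write $\epsilon_j=-1$ if $j\in\varphi$ and $\epsilon_j=+1$ otherwise, so the generator of $\Z/2$ acts on $\R^{p,q}$ by $(x_1,\dots,x_p)\mapsto(\epsilon_1 x_1,\dots,\epsilon_p x_p)$. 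As in ordinary Schubert calculus, every plane $\ell\in e(\sigma,\varphi)$ has a unique ordered basis whose matrix $M$ is in reduced echelon form with the last nonzero entry of row $i$ equal to $1$ and lying in column $\sigma_i$; the remaining (``free'') entries of row $i$ then occupy exactly the columns in $F_i:=\underline{\sigma_i}\setminus\sigma(i)$, and $|F_i|=(\sigma_i-1)-(i-1)=\sigma_i-i$. This identifies $e(\sigma,\varphi)$ with $\R^{a}$, $a=\sum_{i=1}^n(\sigma_i-i)$, recovering the dimension claim; it remains to compute the number $b$ of twisted directions.

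Next I would describe the action in these coordinates. Acting on $\R^{p,q}$ sends the matrix $M$ to $M\,\diag(\epsilon_1,\dots,\epsilon_p)$, which multiplies entry $(i,j)$ by $\epsilon_j$; in particular the pivot entry $(i,\sigma_i)$ becomes $\epsilon_{\sigma_i}$. Since rescaling a row does not change the plane it spans, the echelon representative of the image of $\ell$ is recovered by multiplying row $i$ by $\epsilon_{\sigma_i}$, after which the free entry in position $(i,j)$ has been multiplied by $\epsilon_j\epsilon_{\sigma_i}$. Thus the action on $e(\sigma,\varphi)\cong\R^a$ is linear and diagonal — confirming that $e(\sigma,\varphi)$ is the open cell of a representation disk $D(\R^{a,b})$ — and the coordinate indexed by $(i,j)$ (with $j\in F_i$) is twisted precisely when $\epsilon_j\epsilon_{\sigma_i}=-1$, i.e.\ when exactly one of $j,\sigma_i$ lies in $\varphi$. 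Hence $b=\sum_{i=1}^n |\{\,j\in F_i:\ [\,j\in\varphi\,]\neq[\,\sigma_i\in\varphi\,]\,\}|$.

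The last step is the set-theoretic bookkeeping. Fix $i$. If $\sigma_i\in\varphi$, the twisted columns in row $i$ are those $j\in F_i$ with $j\notin\varphi$, i.e.\ $F_i\setminus\varphi=(\underline{\sigma_i}\setminus\sigma(i))\setminus\varphi=\underline{\sigma_i}\setminus(\varphi\cup\sigma(i))$; if $\sigma_i\notin\varphi$, they are those $j\in F_i$ with $j\in\varphi$, i.e.\ $F_i\cap\varphi=(\underline{\sigma_i}\cap\varphi)\setminus\sigma(i)$. Summing the two cases over $i$ yields exactly the formula in the statement. The only genuinely delicate point is the second step — checking that ``act, then renormalize by row scaling'' really is the $\Z/2$-action expressed in the echelon coordinates; once the echelon normal form is pinned down (uniqueness and continuous dependence being the standard nonequivariant facts), everything else is the elementary counting above, and it can be sanity-checked against the worked example $G_2(\R^{5,3})$, $\sigma=(3,5)$, $\varphi=(1,3,4)$, where the three minus signs appearing in the renormalized matrix are precisely the three twists predicted by the formula.
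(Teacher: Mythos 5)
Your proposal is correct and is essentially the paper's argument written out in full: the paper's proof is a two-sentence sketch asserting that $a$ is computed as in the nonequivariant case and that the twisted coordinates in row $i$ are exactly the $\ast$ entries whose column carries the opposite sign from the pivot column $\sigma_i$, which is precisely your $\epsilon_j\epsilon_{\sigma_i}=-1$ criterion. Your added detail (the explicit renormalization step and the set-theoretic identification of the twisted columns with $\underline{\sigma_i}\setminus(\varphi\cup\sigma(i))$ or $(\underline{\sigma_i}\cap\varphi)\setminus\sigma(i)$) fills in exactly what the paper leaves implicit, and it checks out against the worked $G_2(\R^{5,3})$ example.
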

\begin{proof}
The formula for $a$ is exactly the same as in the nonequivariant
case.  The one for $b$ follows since the number of twisted
coordinates in each row is exactly the number of $\ast$ coordinates
for which the action is opposite to that on the coordinate
containing the 1 in that echelon row.
\end{proof}

\begin{cor} Real and complex projective spaces and Grassmann manifolds have free $RO(\Z/2)$-graded cohomology with $\underline{\Z/2}$ coefficients.
\end{cor}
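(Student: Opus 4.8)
The plan is to deduce the corollary directly from the Freeness Theorem (Theorem~\ref{thm:freeness}): it suffices to check that each of the listed spaces is a connected, locally finite, finite-dimensional $\text{Rep}(\Z/2)$-complex, since the conclusion then follows verbatim.

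First I would treat a real Grassmannian $G_n(\R^{p,q})$. Fix any flag symbol $\varphi$ for $\R^{p,q}$. The echelon-form analysis preceding Proposition~\ref{prop:schubertcells} exhibits, for each Schubert symbol $\sigma$, the cell $e(\sigma,\varphi)$ as the interior of a disk $D(W_\sigma)$ where $W_\sigma$ is a genuine $\Z/2$-representation: $\Z/2$ acts linearly on the free entries of the echelon matrix, and Proposition~\ref{prop:schubertcells} records its isomorphism type as $\R^{a_\sigma,b_\sigma}$. Since the real dimension of $e(\sigma,\varphi)$ is the nonequivariant Schubert-cell dimension $\sum_i(\sigma_i-i)$, and since the closure of a Schubert cell is a union of Schubert cells of strictly smaller dimension, the skeleta $X^{(k)}=\bigcup_{\dim e(\sigma,\varphi)\le k}\overline{e(\sigma,\varphi)}$ form a $\text{Rep}(\Z/2)$-cell structure: $X^{(0)}$ is the single $0$-cell (arising from $\sigma=(1,\dots,n)$, a fixed point, i.e.\ the orbit $\Z/2/\Z/2$), and each higher cell is attached to a lower skeleton along an equivariant map $S(W_\sigma)\to X^{(k-1)}$. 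Because $p$ is finite there are finitely many Schubert symbols, so $X$ is finite-dimensional and locally finite, and $G_n(\R^{p,q})$ is connected as a space (it is the familiar connected manifold $G_n(\R^p)$ carrying a $\Z/2$-action). Hence Theorem~\ref{thm:freeness} applies, and real projective space $\bP(\R^{p,q})=G_1(\R^{p,q})$ is the case $n=1$.

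Next I would handle the complex cases. A complex $\Z/2$-representation is a sum of copies of the trivial and the sign complex line, which as real representations are $\R^{2,0}$ and $\R^{2,2}$; so every complex $\Z/2$-representation, viewed in $RO(\Z/2)$, has the form $\R^{2a,2b}$, still an honest real representation of $\Z/2$. Repeating the Schubert construction over $\C$ (with the evident complex analogue of a flag symbol), each complex Schubert cell is a complex affine space on whose coordinates $\Z/2$ acts $\C$-linearly, hence is the interior of $D(W_\sigma)$ with $W_\sigma\cong\R^{2a_\sigma,2b_\sigma}$. The same closure-finiteness argument produces a $\text{Rep}(\Z/2)$-cell structure with finitely many cells, a single $0$-cell, and connected total space, so Theorem~\ref{thm:freeness} again yields freeness for complex projective spaces and complex Grassmannians.

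The only content beyond invoking Theorem~\ref{thm:freeness} is the verification that the Schubert stratification is genuinely a $\text{Rep}(\Z/2)$-complex structure --- that cells attach in order of dimension and that each characteristic map is equivariant with the prescribed linear source sphere --- which is exactly what the preceding discussion and Proposition~\ref{prop:schubertcells} put in place. I do not expect a further obstacle: once the cell structure is available, freeness is immediate, with the free generators of $H^{*,*}$ in bijection with the Schubert cells, their precise bidegrees governed cell-by-cell by the degree-shifting phenomenon analyzed in the proof of Theorem~\ref{thm:freeness} (and hence generally not available in closed form).
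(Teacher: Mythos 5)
Your proposal is correct and is exactly the argument the paper intends: the corollary is stated without a separate proof precisely because the preceding Schubert-cell discussion (culminating in Proposition~\ref{prop:schubertcells}) exhibits these spaces as connected, locally finite, finite-dimensional $\text{Rep}(\Z/2)$-complexes, after which Theorem~\ref{thm:freeness} applies immediately. Your additional verification that complex representations of $\Z/2$ yield underlying real representations of the form $\R^{2a,2b}$, so that complex Schubert cells are also $\text{Rep}(\Z/2)$-cells, is a detail the paper leaves implicit but is entirely in the same spirit.
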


\begin{prop} If $V \subseteq V'$ is an inclusion of representations and $\varphi \subseteq \varphi'$ is an extension of flag symbols for $V$ and $V'$, then there is a cellular inclusion $G_n(V) \inc G_n(V')$.

\end{prop}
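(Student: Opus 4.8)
The plan is to exhibit the map explicitly and then verify it respects the Schubert cell structures of Proposition~\ref{prop:schubertcells}. Write $V=\R^{p,q}$, $V'=\R^{p',q'}$, and let $V_0\subset\cdots\subset V_p=V$ and $V'_0\subset\cdots\subset V'_{p'}=V'$ be the coordinate flags determined by $\varphi$ and $\varphi'$. The hypothesis that $\varphi\subseteq\varphi'$ is an \emph{extension} of flag symbols for the inclusion $V\subseteq V'$ unwinds to the single statement that $V_j=V'_j$ for $0\le j\le p$ (in particular $V=V'_p$), equivalently that $\varphi'\cap\{1,\dots,p\}=\varphi$ and every index of $\varphi'$ not in $\varphi$ exceeds $p$. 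I would record this reformulation and use only it. Define $\iota\colon G_n(V)\to G_n(V')$ by $\ell\mapsto\ell$, viewing an $n$-plane of $V$ as an $n$-plane of $V'$; this is a $\Z/2$-equivariant continuous injection, hence (as $G_n(V)$ is compact and $G_n(V')$ is Hausdorff) a closed embedding. So the content is entirely in matching up cells.

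The heart of the argument is that $\iota$ carries the cell $e(\sigma,\varphi)$ of $G_n(V)$ bijectively onto the cell $e(\sigma,\varphi')$ of $G_n(V')$ \emph{for the same Schubert symbol} $\sigma=(\sigma_1<\cdots<\sigma_n)$ (necessarily $\sigma_n\le p$). In one direction, membership of $\ell$ in $e(\sigma,\cdot)$ is governed by the jump conditions $\dim(\ell\cap V_{\sigma_i})=1+\dim(\ell\cap V_{\sigma_i-1})$; since $\sigma_i\le p$ forces $V_{\sigma_i}=V'_{\sigma_i}$ and $V_{\sigma_i-1}=V'_{\sigma_i-1}$, the conditions for $\ell\subseteq V$ relative to $\varphi$ coincide with those for the same $\ell\subseteq V'$ relative to $\varphi'$. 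In the other direction, any $\ell'\in e(\sigma,\varphi')$ with $\sigma_n\le p$ has a reduced echelon basis supported in coordinates $\le\sigma_n\le p$, so $\ell'\subseteq V'_p=V$ and lies in the image. Under $\iota$ the two open cells are parametrized by literally the same free echelon entries carrying the same $\Z/2$-action, so this bijection is an equivariant homeomorphism of cells; and since $\varphi'\cap\{1,\dots,p\}=\varphi$ and $\underline{\sigma_i}\subseteq\{1,\dots,p\}$, replacing $\varphi$ by $\varphi'$ leaves each of $|\underline{\sigma_i}\setminus(\varphi\cup\sigma(i))|$, $|(\underline{\sigma_i}\cap\varphi)\setminus\sigma(i)|$, and the partition of the $\sigma_i$ into those in $\varphi$ and those not, unchanged, so by Proposition~\ref{prop:schubertcells} the bidegrees $(a,b)$ of $e(\sigma,\varphi)$ and $e(\sigma,\varphi')$ agree and their disk domains $D(W)$ are the same representation; hence the characteristic map of $e(\sigma,\varphi)$ followed by $\iota$ is the characteristic map of $e(\sigma,\varphi')$.

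It remains to assemble this. The image $\iota(G_n(V))$ is compact, hence closed in $G_n(V')$, and equals the union of the open cells $e(\sigma,\varphi')$ with $\sigma_n\le p$; a closed subspace that is a union of open cells is a subcomplex, call it $Y$. Then $\iota\colon G_n(V)\to Y$ is a cell-preserving continuous bijection from a compact space onto a Hausdorff space, hence a homeomorphism, and it matches characteristic maps cell by cell, so it is an isomorphism of $\text{Rep}(\Z/2)$-complexes onto a subcomplex; in particular $\iota$ sends the $m$-skeleton of $G_n(V)$ into the $m$-skeleton of $G_n(V')$ for every $m$, which is the asserted cellular inclusion. The one place I expect to require genuine care is the bookkeeping in the second paragraph — checking that passing from $\varphi$ to $\varphi'$ does not disturb the dimension formula, and that the echelon coordinates coordinatizing $e(\sigma,\varphi)$ and $e(\sigma,\varphi')$ are literally identified by $\iota$ — but this is a routine finite combinatorial verification rather than a conceptual obstacle.
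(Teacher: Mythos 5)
Your proof is correct; the paper states this proposition without any proof, and your argument --- identifying $\iota(G_n(V))$ with the union of the cells $e(\sigma,\varphi')$ having $\sigma_n\le p$, and using Proposition \ref{prop:schubertcells} to check that the bidegrees and characteristic maps of $e(\sigma,\varphi)$ and $e(\sigma,\varphi')$ coincide --- is exactly the routine verification the author leaves to the reader. The only point requiring interpretation is the undefined phrase ``extension of flag symbols,'' and your reading ($\varphi'\cap\{1,\dots,p\}=\varphi$, so the coordinate flags agree through level $p$) is the one consistent with how the proposition is used later, e.g.\ for the inclusions $\R\bP^{n}_{tw}\subseteq\R\bP^{n+1}_{tw}$.
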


The following theorem guarantees that the cohomology of Grassmann manifolds have cohomology generators in bijective correspondence with the Schubert cells.

\begin{thm} $H^{*,*}(G_n(\R^{u,v}))$ is a free $H^{*,*}(pt)$-module with generators in bijective correspondence with the Schubert cells. \label{thm:cellbij}
\end{thm}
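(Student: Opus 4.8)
The plan is to combine the Freeness Theorem (Theorem~\ref{thm:freeness}) with a careful bookkeeping argument on the `one-at-a-time' cellular spectral sequence to show that no cell ever gets killed. By Theorem~\ref{thm:freeness}, $H^{*,*}(G_n(\R^{u,v}))$ is already known to be a free $H^{*,*}(pt)$-module; the only thing left to prove is that the \emph{number} of free generators equals the number of Schubert cells, i.e. that in the spectral sequence of the Schubert filtration, every differential hitting the top cone of some $\nu$ is actually zero (case (2) of the Lemma never occurs). A rank count then finishes the job: if $X$ is built from $B$ by attaching a $(p,q)$-cell and the attaching map does not kill $\nu$, then by the Theorem immediately preceding the Freeness Theorem (and its evident multi-generator generalization used in the proof of Theorem~\ref{thm:freeness}), $H^{*,*}(X)$ is free on (number of generators of $H^{*,*}(B)$) $+\,1$ generators. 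So the count of generators is preserved cell-by-cell precisely when no cancellation happens, and inductively equals the number of Schubert cells.

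\medskip

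First I would set up the filtration: order the Schubert cells $e(\sigma,\varphi)$ of $G_n(\R^{u,v})$ by dimension (the integer $a$ of Proposition~\ref{prop:schubertcells}), breaking ties arbitrarily but consistently, and attach them one at a time as in the proof of Theorem~\ref{thm:freeness}. Suppose for contradiction that at some stage a cell $\nu$ in bidegree $(p,q)$ is killed: by the Lemma this means some free generator $\omega$ of the previous skeleton in bidegree $(p-1,q)$ has $d(\omega)=\nu$ (after change of basis), so that $\omega$ and $\nu$ both disappear. The key point is that $\omega$ itself came from an earlier Schubert cell, say $e(\sigma',\varphi)$, of dimension $(p-1, q')$ for the relevant integer invariants. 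Now I would use the cellular inclusions of Proposition~4.8 (or its proof) to map a smaller Grassmannian $G_n(\R^{u',v'})$ into $G_n(\R^{u,v})$ compatibly with the Schubert filtrations, chosen so that the two offending cells both lie in the image.

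\medskip

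The mechanism I would actually exploit is \emph{forgetting the $\Z/2$-action}. Via the forgetful map $\psi$ of Lemma~\ref{lemma:forget}, the $\Z/2$-Schubert cell structure on $G_n(\R^{u,v})$ maps to the ordinary Schubert cell structure on the nonequivariant Grassmannian $G_n(\R^u)$, with a $(p,q)$-cell going to an ordinary $p$-cell. Nonequivariantly it is classical (the cellular boundary maps of the Schubert CW structure on $G_n(\R^u)$ with $\Z/2$-coefficients all vanish, since $H^*(G_n(\R^u);\Z/2)$ has total rank equal to $\binom{u}{n}$, the number of Schubert cells). Hence in the nonequivariant cellular complex $\partial \bar\omega = 0$, where $\bar\omega$ is the cell mapping to $\omega$. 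I would then argue that an attaching map which "kills $\nu$ and a free generator in bidegree $(p-1,q)$" forces, after applying $\psi$ and using the naturality of the cellular spectral sequence under the forgetful functor, a nonzero cellular boundary $\partial\bar\omega = \bar\nu$ in the nonequivariant complex — contradicting the vanishing just noted. This rules out case (2) of the Lemma for every Schubert cell, so inductively each cell contributes exactly one new free generator, giving the bijection.

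\medskip

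\textbf{The hard part} will be the naturality/compatibility step in the previous paragraph: making precise that "$\nu$ is killed in the equivariant spectral sequence" implies "$\bar\nu$ is a cellular boundary nonequivariantly." One must check that the forgetful map induces a map of the two cellular spectral sequences (equivariant, for fixed $q$, versus the ordinary one), that it sends $\nu\mapsto\bar\nu$ and $\omega\mapsto\bar\omega$ up to units, and — most delicately — that case (2) of the Lemma is the \emph{only} way a cell fails to survive, so that excluding it suffices. This last needs the multi-cell bookkeeping from the proof of Theorem~\ref{thm:freeness}: one shows that when several cells of the same dimension are attached, the change-of-basis argument there still leaves case (2) as the unique cancellation phenomenon, and the forgetful-map argument kills it in each case. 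Once that is in place, the rank count is immediate and the theorem follows.
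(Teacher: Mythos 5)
Your argument is essentially correct, but it is a genuinely different proof from the one in the paper. Both proofs take freeness from Theorem \ref{thm:freeness} and then use the forgetful map $\psi$ to push the number of module generators up to the number of cells, but they deploy $\psi$ differently. The paper's argument is a single global rank count: the cellular spectral sequence gives (number of generators) $\leq$ (number of Schubert cells); freeness plus finite dimensionality makes $\cdot\rho$ injective for large $q$, so the exact sequence of Lemma \ref{lemma:forget} shows $\psi$ is surjective onto $H^{*}_{sing}(G_n(\R^u);\Z/2)$, which is free of rank equal to the number of Schubert cells; writing each preimage $\alpha_i$ in terms of the module generators $\omega_j$ and discarding the $\rho$-divisible terms shows the classes $\psi(\omega_j)$ span, whence (number of generators) $\geq$ (number of cells). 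Your argument is instead local and cell-by-cell: you rule out case (2) of the Lemma at every stage of the one-at-a-time filtration by naturality of $\psi$ with respect to the connecting maps of the pairs $(X^{(n+1)},X^{(n)})$, together with the classical fact that the mod $2$ Schubert cellular coboundary of $G_n(\R^u)$ vanishes, so that $\psi(d\omega)=\delta_{sing}(\psi(\omega))=0$ while $\psi(\nu)\neq 0$. This does work: the naturality you flag as the hard part is just the statement that $\psi$ is a stable natural transformation of cohomology theories (the same fact the Lemma already uses implicitly), and you do not actually need to identify $\psi(\omega)$ with a particular cellular cochain --- it suffices that the nonequivariant connecting map vanishes on all of $H^{p-1}_{sing}(X^{(n)})$, which follows from the rank count for the underlying Grassmannian. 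Your detour through cellular inclusions of smaller Grassmannians is unnecessary and should be cut, and your worry about several same-dimensional cells is already handled by attaching cells one at a time. As for what each approach buys: the paper's count is shorter and avoids spectral-sequence naturality entirely, while yours makes explicit, stage by stage, that the only possible failure of the bijection is the cancellation of case (2) and that it never occurs --- the form of the statement that is actually used later (e.g.\ in Lemma \ref{lemma:RPmodule}) to conclude that certain cellular spectral sequences have no nonzero differentials at all.
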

\begin{proof}
Since $G_n(\R^{u,v})$ has a $\text{Rep}(\Z/2)$-complex structure, we know $H^{*,*}(G_n(\R^{u,v}))$ is free by the freeness theorem, Theorem \ref{thm:freeness}.  Let $\{\omega_1, \dots,
\omega_k\}$ be a set of free generators. Then $k \leq m$ where $m$
is the number of Schubert cells.

These spaces are based, so we can appeal to the forgetful long exact
sequence Lemma \ref{lemma:forget}.  By freeness and finite dimensionality,
the multiplication by $\rho$ map
is an injection for large enough $q$. Thus the forgetful map to
non-equivariant cohomology is surjective. Since
$H^{*}_{sing}(G_n(\R^{u,v}))$ is free with generators $a_1, \dots
a_m$ in bijective correspondence with the Schubert cells,
$H^{*,*}(G_n(\R^{u,v}))$ has a set of elements, $\{\alpha_1, \dots,
\alpha_m\}$, with $\psi(\alpha_i)=a_i$.  We can uniquely express
each $\alpha_i$ as $\alpha_i =\sum_{j=1}^k \rho^{e_{ij}}
\tau^{f_{ij}}\omega_j$.  We can ignore any terms that have $\rho$ in
them since $\psi(\rho)=0$. This gives a new set of elements,
$\bar{\alpha}_i = \sum_{j=1}^k \epsilon_{ij} \tau^{f_{ij}}\omega_j$,
where $\epsilon_{ij}=0$ or $1$ and $\psi(\bar{\alpha}_i)=a_i$. Since
$\psi(\tau)=1$, we have that $\sum_{j=1}^k
\epsilon_{ij}\psi(\omega_j)=a_i$.  Since linear combinations of the
linearly independent $\omega_j$'s map to the linearly independent
$a_i$'s, there are at least as many $\omega_j$'s as there are
$a_i$'s.  That is, $k \geq m$. 
\end{proof}

The above theorem is enough to determine the additive structure of the $RO(\Z/2)$-graded cohomology of the real projective spaces.

Recall that $\cU=(\R^{2,1})^\infty$ is a complete universe in the sense of \cite{Alaska}. Denote by
$\R\bP^\infty_{tw}=\bP(\cU)$, the space of lines in the
complete universe $\cU$.

Denote by $\R\bP^{n}_{tw}=\bP(\R^{n+1,\left\lfloor \frac{n+1}{2}
\right\rfloor})$, the equivariant space of lines in
$\R^{n+1,\left\lfloor \frac{n+1}{2} \right\rfloor}$.  For example,
$\R\bP^3_{tw} = \bP(\R^{4,2})$, $\R\bP^4_{tw} = \bP(\R^{5,2})$, and
$\R\bP^1_{tw} = S^{1,1}$.  There are natural cellular inclusions
$\R\bP^n_{tw} \inc \R\bP^{n+1}_{tw}$, the colimit of which is $\R\bP^\infty_{tw}$.

\begin{lem} $\R \bP^{n}_{tw}$ has a $\text{Rep}(\Z/2)$-structure with cells in dimension $(0,0)$, $(1,1)$, $(2,1)$, $(3,2)$, $(4,2)$, $\dots, (n,\left\lceil \frac{n}{2} \right\rceil)$.
\end{lem}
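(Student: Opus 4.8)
The plan is to exhibit the required cell structure directly as a Schubert cell decomposition and then read off the bidegrees from Proposition~\ref{prop:schubertcells}. By definition $\R\bP^n_{tw} = \bP(\R^{n+1,\lfloor (n+1)/2\rfloor}) = G_1(\R^{n+1,\lfloor (n+1)/2\rfloor})$, so what I need is a good choice of flag symbol $\varphi$ for the representation $\R^{n+1,\lfloor (n+1)/2\rfloor}$. The choice that produces the alternating pattern is $\varphi = (2,4,6,\dots,2\lfloor (n+1)/2\rfloor)$, the list of even integers in $\{1,\dots,n+1\}$. First I would check this is a legitimate flag symbol: it is strictly increasing with largest entry $\le n+1$, and it has exactly $\lfloor (n+1)/2\rfloor$ entries, so the associated flag $V_0 \subset V_1 \subset \cdots \subset V_{n+1}$ has its $\lfloor (n+1)/2\rfloor$ twisted directions in the even slots and its $\lceil (n+1)/2\rceil$ trivial directions in the odd slots, i.e.\ it realizes precisely $\R^{n+1,\lfloor (n+1)/2\rfloor}$.

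Next I would enumerate the Schubert symbols. Since we are in $G_1$, a Schubert symbol is a single integer $\sigma = (\sigma_1)$ with $1 \le \sigma_1 \le n+1$, so there are $n+1$ cells, and Proposition~\ref{prop:schubertcells} gives topological dimension $a = \sigma_1 - 1$, reproducing the familiar cell structure of $\R\bP^n$. For the number of twists $b$ I would split into the proposition's two cases. If $\sigma_1$ is even then $\sigma_1 \in \varphi$ and $\underline{\sigma_1}\setminus(\varphi\cup\sigma(1))$ is the set of odd integers strictly below $\sigma_1$, which has $\sigma_1/2$ elements, so $b = \sigma_1/2$. If $\sigma_1$ is odd then $\sigma_1 \notin \varphi$ and $(\underline{\sigma_1}\cap\varphi)\setminus\sigma(1)$ is the set of even integers below $\sigma_1$, which has $(\sigma_1-1)/2$ elements, so $b = (\sigma_1-1)/2$. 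In both cases $b = \lceil (\sigma_1-1)/2\rceil = \lceil a/2\rceil$.

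Letting $\sigma_1$ run from $1$ to $n+1$ then yields one cell in each bidegree $(k,\lceil k/2\rceil)$ for $k = 0,1,\dots,n$, which is exactly the asserted list $(0,0),(1,1),(2,1),(3,2),(4,2),\dots,(n,\lceil n/2\rceil)$. I would finish by recording the low-dimensional checks already noted in the paper, namely $\R\bP^1_{tw} = S^{1,1}$, $\R\bP^3_{tw} = \bP(\R^{4,2})$ and $\R\bP^4_{tw} = \bP(\R^{5,2})$, all of which come out consistently from the formula.

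There is no deep obstacle here; the essential point is the choice of the interleaved flag $\varphi = (2,4,6,\dots)$ rather than the naive flag that places all twisted coordinates last (which would give a non-alternating, and incorrect, twist pattern), after which the proof is a short two-case count with the combinatorial formula of Proposition~\ref{prop:schubertcells}. The only place requiring care is the parity bookkeeping in evaluating the cardinalities $|\underline{\sigma_1}\setminus(\varphi\cup\sigma(1))|$ and $|(\underline{\sigma_1}\cap\varphi)\setminus\sigma(1)|$, together with a glance at the edge cases $\sigma_1 = 1$ and $\sigma_1 = n+1$.
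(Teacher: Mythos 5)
Your proof is correct and is exactly the paper's argument: the paper's entire proof is to invoke Proposition~\ref{prop:schubertcells} with the flag symbol $\varphi=(2,4,6,\dots)$, and you have simply carried out the two-case count of twists that this entails. The parity bookkeeping checks out ($b=\sigma_1/2$ for $\sigma_1$ even, $b=(\sigma_1-1)/2$ for $\sigma_1$ odd, i.e.\ $b=\lceil a/2\rceil$), so nothing is missing.
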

\begin{proof} This follows from Proposition \ref{prop:schubertcells} using the flag symbol $\varphi=(2, 4, 6, \dots)$. 
\end{proof}

\begin{lem}
$\R \bP^\infty_{tw}$ has a cell
structure with a single cell in dimension $(n, \left\lceil
\frac{n}{2} \right\rceil)$, for all $n \in \N$.
\end{lem}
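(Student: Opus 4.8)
The plan is to produce the cell structure on $\R\bP^\infty_{tw}$ by assembling the cell structures on the finite stages $\R\bP^n_{tw}$ coming from the previous lemma, using the natural cellular inclusions $\R\bP^n_{tw}\inc\R\bP^{n+1}_{tw}$ and the identification of $\R\bP^\infty_{tw}$ with their colimit. No new hard analysis is needed; the whole point is to check that these inclusions are cellular for a coherent choice of flags and that they add exactly one cell at each stage.

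First I would fix the flag symbol $\varphi=(2,4,6,\dots)$ and use its truncations $(2,4,\dots,2\lfloor(n+1)/2\rfloor)$ as the flag symbol for $\R^{n+1,\lfloor(n+1)/2\rfloor}=V_{n+1}$; these flags are nested as $n$ grows, so by the preceding proposition on cellular inclusions of Grassmannians there are cellular inclusions $\R\bP^n_{tw}=\bP(V_{n+1})\inc\bP(V_{n+2})=\R\bP^{n+1}_{tw}$. Comparing the cell lists of the previous lemma for $n$ and for $n+1$, the space $\R\bP^{n+1}_{tw}$ has exactly one cell not already present in $\R\bP^n_{tw}$, namely a single cell in dimension $(n+1,\lceil(n+1)/2\rceil)$; equivalently, the image of the inclusion is precisely the $n$-skeleton of $\R\bP^{n+1}_{tw}$ in this cell structure. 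Next I would identify the colimit: since $\cU=(\R^{2,1})^\infty=\colim_k\R^{2k,k}$ and every line in $\cU$ already lies in some $\R^{2k,k}$, projectivization commutes with this colimit, so $\R\bP^\infty_{tw}=\bP(\cU)=\colim_k\bP(\R^{2k,k})$. The even-indexed subsequence $\bP(\R^{2k,k})=\R\bP^{2k-1}_{tw}$ is cofinal in the tower $\R\bP^0_{tw}\inc\R\bP^1_{tw}\inc\cdots$, so $\R\bP^\infty_{tw}=\colim_n\R\bP^n_{tw}$ as $\Z/2$-spaces, compatibly with the cellular inclusions above.

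Finally, a colimit of $\text{Rep}(\Z/2)$-complexes along cellular inclusions is again a $\text{Rep}(\Z/2)$-complex whose cells are the union of the cells of the finite stages, with $n$-skeleton $\R\bP^n_{tw}$. By the two previous steps this union consists of exactly one cell in each dimension $(n,\lceil n/2\rceil)$, $n\in\N$, which is the asserted cell structure. I do not expect a genuine obstacle: the only points that require care are verifying that the truncated flag symbols $(2,4,6,\dots)$ are compatible so that the proposition applies, and the cofinality argument identifying $\colim_n\R\bP^n_{tw}$ with $\bP(\cU)$, both of which are routine once the previous lemma is available.
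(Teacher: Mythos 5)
Your proposal is correct and follows essentially the same route as the paper: the paper's proof is exactly the observation that the inclusions $\R\bP^{1}_{tw}\inc\R\bP^{2}_{tw}\inc\cdots$ are cellular with colimit $\R\bP^\infty_{tw}$, so the cells of the finite stages assemble into the stated cell structure. You simply spell out the flag-compatibility and cofinality details that the paper leaves implicit.
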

\begin{proof}
The inclusions $\R\bP^{1}_{tw} \inc \R\bP^{2}_{tw} \inc
\cdots$ are cellular and their colimit is $\R\bP^\infty_{tw}$. 
\end{proof}

\begin{prop}

As a $H^{*,*}(pt)$-module, $H^{*,*}(\R \bP^{n}_{tw})$ is free with a
single generator in each degree $(k, \left\lceil \frac{k}{2}
\right\rceil)$ for $k= 0, 1, \dots, n$.

\end{prop}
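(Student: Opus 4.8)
The plan is to induct on $n$. By the cell-structure lemma above for $\R\bP^{n}_{tw}$, the space $\R\bP^{n}_{tw}$ is obtained from $\R\bP^{n-1}_{tw}$ by attaching a single $(n,\lceil n/2\rceil)$-cell, so there is a cofiber sequence $\R\bP^{n-1}_{tw}\inc\R\bP^{n}_{tw}\to S^{n,\lceil n/2\rceil}$ and, for each $q$, an associated long exact sequence relating the reduced cohomologies of the three spaces. I will combine this with the Freeness Theorem~\ref{thm:freeness}, with Theorem~\ref{thm:cellbij} (which already tells us that $H^{*,*}(\R\bP^{n}_{tw})$ is free with exactly $n+1$ generators, one for each Schubert cell), and with a short count of degrees in $H^{*,*}(pt)$. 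The base case is clear: $\R\bP^{0}_{tw}$ is a point, and $\R\bP^{1}_{tw}=S^{1,1}$ has unreduced cohomology $H^{*,*}(pt)\oplus\Sigma^{(1,1)}H^{*,*}(pt)$.

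For the inductive step, assume $H^{*,*}(\R\bP^{n-1}_{tw})$ is free with a single generator in each degree $(k,\lceil k/2\rceil)$, $0\le k\le n-1$. Let $\nu$ be the generator of $\tilde H^{*,*}(S^{n,\lceil n/2\rceil})\cong\Sigma^{(n,\lceil n/2\rceil)}H^{*,*}(pt)$, and let $\partial\colon\tilde H^{*,*}(\R\bP^{n-1}_{tw})\to\tilde H^{*,*}(S^{n,\lceil n/2\rceil})$ be the connecting map, an $H^{*,*}(pt)$-linear map of bidegree $(1,0)$ (this is the single cellular differential for the attachment of this one cell). The heart of the matter is to show $\partial=0$. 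Being module-linear, $\partial$ is determined by its values on the generators of $H^{*,*}(\R\bP^{n-1}_{tw})$, and on the generator $\omega$ in degree $(k,\lceil k/2\rceil)$ we have $\partial(\omega)=c\,\nu$ for a unique $c\in H^{\,k+1-n,\;\lceil k/2\rceil-\lceil n/2\rceil}(pt)$.

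Now one inspects Figure~\ref{fig:pt}. Since $k\le n-1$, the first coordinate $k+1-n$ is $\le 0$, and it equals $0$ only for $k=n-1$. If $k+1-n<0$, then a nonzero $c$ would have to lie in the bottom cone, hence satisfy $\lceil k/2\rceil-\lceil n/2\rceil\le(k+1-n)-2$; this rearranges to $\lfloor k/2\rfloor\ge\lfloor n/2\rfloor+1$, which is impossible for $k\le n-2$, so $c=0$. If $k=n-1$, the degree of $c$ is $(0,\lceil(n-1)/2\rceil-\lceil n/2\rceil)$, namely $(0,-1)$ when $n$ is odd — where $H^{*,*}(pt)$ vanishes, so $c=0$ — and $(0,0)$ when $n$ is even. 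Thus the only way $\partial$ could be nonzero is for $n$ even, with $\partial$ carrying the generator in degree $(n-1,n/2)$ onto $\nu$; but that single differential would cancel both that generator and $\nu$, leaving $H^{*,*}(\R\bP^{n}_{tw})$ with only $n-1$ generators and contradicting Theorem~\ref{thm:cellbij}. Hence $\partial=0$.

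With $\partial=0$ the long exact sequence breaks into short exact sequences $0\to\Sigma^{(n,\lceil n/2\rceil)}H^{*,*}(pt)\to\tilde H^{*,*}(\R\bP^{n}_{tw})\to\tilde H^{*,*}(\R\bP^{n-1}_{tw})\to0$. The right-hand term is free, hence projective, so each sequence splits and $\tilde H^{*,*}(\R\bP^{n}_{tw})\cong\Sigma^{(n,\lceil n/2\rceil)}H^{*,*}(pt)\oplus\tilde H^{*,*}(\R\bP^{n-1}_{tw})$; adjoining the unit in degree $(0,0)$ gives the statement for $n$. The only real content is the vanishing of $\partial$, and that is the step I expect to demand the most care (though no real difficulty): one must see that the $(n,\lceil n/2\rceil)$-cell is too ``thin'' — its number of twists grows only like $n/2$ — for any differential to reach the bottom cone of $\nu$, and that the one differential into the top cone of $\nu$ that degree considerations alone do not kill is excluded by the generator count of Theorem~\ref{thm:cellbij}.
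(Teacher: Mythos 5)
Your argument is correct and is essentially the paper's: the paper's (very terse) proof also rests on the observation that the cells of $\R\bP^n_{tw}$ are positioned so that no differential can reach the bottom cone of a higher cell, leaving only generator-killing differentials, which are then ruled out by the generator count of Theorem~\ref{thm:cellbij}. You have simply organized this as a one-cell-at-a-time induction and made explicit the degree computation ($\lfloor k/2\rfloor\ge\lfloor n/2\rfloor+1$ being impossible) that the paper leaves to its figures.
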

\begin{proof}
Any nonzero differentials in the cellular spectral sequence associated to the cell structure using the flag symbol $\varphi = (2,4,6,\dots)$ would decrease the number of cohomology generators below the number of cells.  (See Figures \ref{fig:nodd} and \ref{fig:neven}.)  By Theorem \ref{thm:cellbij} this is not the case, and so the cohomology generators have degrees matching the dimensions of the cells. 
\end{proof}

\begin{figure}[htbp]
\centering
\begin{picture}(330,230)(-150,-110)

\put(-110,-50){\vector(1,0){235}}
\put(-50,-100){\vector(0,1){200}}

\put(-50, -51){\line(0,1){130}} \put(-50, -51){\line(1,1){130}}
\put(-50, -91){\line(0,-1){20}} \put(-50, -91){\line(-1,-1){20}}
\put(-50, -51){\circle*{2}}

\put(-28, -31){\line(0,1){110}} \put(-28, -31){\line(1,1){110}}
\put(-28, -71){\line(0,-1){40}} \put(-28, -71){\line(-1,-1){40}}
\put(-28, -31){\circle*{2}} \put(-26,-35){$a_{1,1}$}

\put(-10, -31){\line(0,1){110}} \put(-10, -31){\line(1,1){110}}
\put(-10, -71){\line(0,-1){40}} \put(-10, -71){\line(-1,-1){40}}
\put(-10, -31){\circle*{2}} \put(-6,-35){$a_{2,1}$}

\put(12, -11){\line(0,1){90}} \put(12, -11){\line(1,1){90}} \put(12,
-51){\line(0,-1){60}} \put(12, -51){\line(-1,-1){60}} \put(12,
-11){\circle*{2}}

\put(32, -11){\line(0,1){90}} \put(32, -11){\line(1,1){70}} \put(32,
-51){\line(0,-1){60}} \put(32, -51){\line(-1,-1){60}} \put(32,
-11){\circle*{2}}

\put(52, 10){\line(0,1){70}} \put(52, 10){\line(1,1){50}} \put(52,
-30){\line(0,-1){80}} \put(52, -30){\line(-1,-1){80}} \put(52,
10){\circle*{2}} \put(55,7){$a_{n,(n-1)/2}$}

\multiput(-90,-51)(20,0){11}{\line(0,1){3}}
\multiput(-52,-90)(0,20){10}{\line(1,0){3}}
\put(-46,-57){$\scriptscriptstyle{0}$}
\put(-26,-57){$\scriptscriptstyle{1}$}
\put(-6,-57){$\scriptscriptstyle{2}$}
\put(14,-57){$\scriptscriptstyle{3}$}
\put(34,-57){$\scriptscriptstyle{4}$}
\put(-66,-57){$\scriptscriptstyle{-1}$}
\put(-86,-57){$\scriptscriptstyle{-2}$}
\put(-57,-47){$\scriptscriptstyle{0}$}
\put(-57,-27){$\scriptscriptstyle{1}$}
\put(-57,-7){$\scriptscriptstyle{2}$}
\put(-57,13){$\scriptscriptstyle{3}$}

\put(-58,95){${q}$} \put(115,-60){${p}$}
\end{picture}

\caption{The $E_1$ page of the cellular spectral sequence for
$\R\bP^{n}_{tw}$ for $n$ odd.} \label{fig:nodd}
\end{figure}
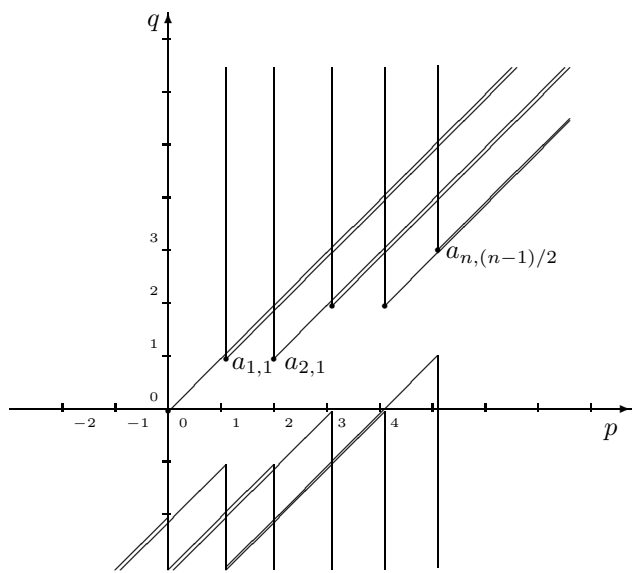

\begin{figure}[htbp]
\centering
\begin{picture}(330,230)(-150,-110)
\put(-110,-50){\vector(1,0){235}}
\put(-50,-100){\vector(0,1){200}}

\put(-50, -51){\line(0,1){130}} \put(-50, -51){\line(1,1){130}}
\put(-50, -91){\line(0,-1){20}} \put(-50, -91){\line(-1,-1){20}}
\put(-50, -51){\circle*{2}}

\put(-28, -31){\line(0,1){110}} \put(-28, -31){\line(1,1){110}}
\put(-28, -71){\line(0,-1){40}} \put(-28, -71){\line(-1,-1){40}}
\put(-28, -31){\circle*{2}} \put(-26,-35){$a_{1,1}$}

\put(-10, -31){\line(0,1){110}} \put(-10, -31){\line(1,1){110}}
\put(-10, -71){\line(0,-1){40}} \put(-10, -71){\line(-1,-1){40}}
\put(-10, -31){\circle*{2}} \put(-6,-35){$a_{2,1}$}

\put(12, -11){\line(0,1){90}} \put(12, -11){\line(1,1){90}} \put(12,
-51){\line(0,-1){60}} \put(12, -51){\line(-1,-1){60}} \put(12,
-11){\circle*{2}}

\put(32, -11){\line(0,1){90}} \put(32, -11){\line(1,1){70}} \put(32,
-51){\line(0,-1){60}} \put(32, -51){\line(-1,-1){60}} \put(32,
-11){\circle*{2}}

\put(52, 10){\line(0,1){70}} \put(52, 10){\line(1,1){50}} \put(52,
-30){\line(0,-1){80}} \put(52, -30){\line(-1,-1){80}} \put(52,
10){\circle*{2}}

\put(70, 10){\line(0,1){70}} \put(70, 10){\line(1,1){30}} \put(70,
-30){\line(0,-1){80}} \put(70, -30){\line(-1,-1){80}} \put(70,
10){\circle*{2}} \put(73,7){$a_{n,n/2}$}

\multiput(-90,-51)(20,0){11}{\line(0,1){3}}
\multiput(-52,-90)(0,20){10}{\line(1,0){3}}
\put(-46,-57){$\scriptscriptstyle{0}$}
\put(-26,-57){$\scriptscriptstyle{1}$}
\put(-6,-57){$\scriptscriptstyle{2}$}
\put(14,-57){$\scriptscriptstyle{3}$}
\put(34,-57){$\scriptscriptstyle{4}$}
\put(-66,-57){$\scriptscriptstyle{-1}$}
\put(-86,-57){$\scriptscriptstyle{-2}$}
\put(-57,-47){$\scriptscriptstyle{0}$}
\put(-57,-27){$\scriptscriptstyle{1}$}
\put(-57,-7){$\scriptscriptstyle{2}$}
\put(-57,13){$\scriptscriptstyle{3}$}

\put(-58,95){${q}$} \put(115,-60){${p}$}
\end{picture}
\caption{The $E_1$ page of the cellular spectral sequence for
$\R\bP^{n}_{tw}$ for $n$ even.} \label{fig:neven}
\end{figure}
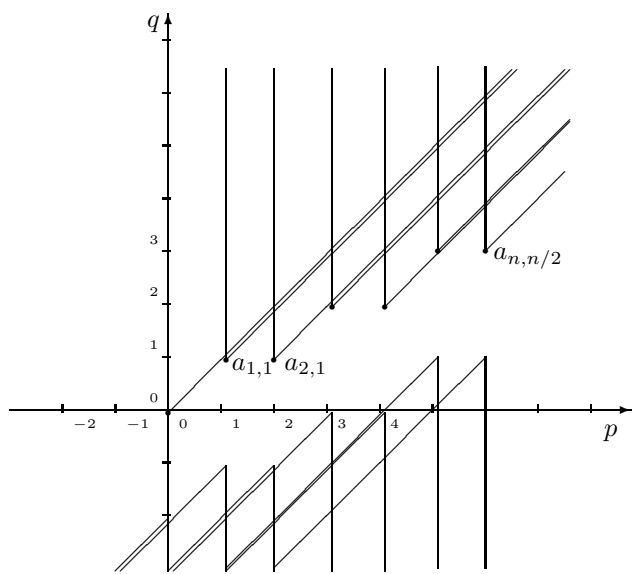

\begin{prop}

As a $H^{*,*}(pt)$-module, $H^{*,*}(\R \bP^\infty_{tw})$ is free
with a single generator in each degree $(n, \left\lceil \frac{n}{2}
\right\rceil)$, for all $n \in \N$.

\end{prop}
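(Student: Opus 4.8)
The plan is to pass to the colimit: by the two preceding lemmas, $\R\bP^\infty_{tw} = \colim_N \R\bP^N_{tw}$ along the cellular inclusions $\iota_N \colon \R\bP^N_{tw} \inc \R\bP^{N+1}_{tw}$, whose cofiber is $S^{N+1,\lceil (N+1)/2\rceil}$. The Freeness Theorem does not apply directly here, since $\R\bP^\infty_{tw}$ is infinite dimensional, so the colimit step is doing real work. The first task is to show each restriction $\iota_N^* \colon H^{*,*}(\R\bP^{N+1}_{tw}) \to H^{*,*}(\R\bP^N_{tw})$ is surjective in every bidegree. In the cofiber long exact sequence the obstruction to surjectivity is the connecting homomorphism $H^{p,q}(\R\bP^N_{tw}) \to H^{p+1,q}(S^{N+1,\lceil(N+1)/2\rceil})$, which is exactly the differential studied in the Lemma for attaching the single $(N+1,\lceil(N+1)/2\rceil)$-cell to $\R\bP^N_{tw}$. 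Were it nonzero, then by the Lemma and the analysis in the proof of the Freeness Theorem it would either lower the number of free generators of $H^{*,*}(\R\bP^{N+1}_{tw})$ or shift one of them into a degree that is not the dimension of any cell; either outcome contradicts the preceding Proposition, which pins the generators to the degrees $(k,\lceil k/2\rceil)$, $0 \le k \le N+1$. Hence $\iota_N^*$ is surjective.

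Next I would apply the Milnor $\lim^1$ exact sequence to the sequential colimit $\R\bP^\infty_{tw}$. Surjectivity of all the $\iota_N^*$ makes each relevant tower of abelian groups satisfy Mittag--Leffler, so $\lim^1$ vanishes, $H^{*,*}(\R\bP^\infty_{tw}) \cong \lim_N H^{*,*}(\R\bP^N_{tw})$, and each projection $H^{*,*}(\R\bP^\infty_{tw}) \to H^{*,*}(\R\bP^N_{tw})$ is surjective. The crucial finiteness observation is that in a fixed bidegree $(p,q)$ only finitely many of the generators $x_k$ of $H^{*,*}(\R\bP^N_{tw})$ (the one in degree $(k,\lceil k/2\rceil)$) contribute, since the summand $H^{p-k,\,q-\lceil k/2\rceil}(pt)$ is nonzero only when $(p-k,\,q-\lceil k/2\rceil)$ lies in the top cone of $H^{*,*}(pt)$, which forces $0 \le p-k$, or in the bottom cone, which forces $\lfloor k/2\rfloor \le p-q-2$. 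Thus the tower $\{H^{p,q}(\R\bP^N_{tw})\}_N$ stabilizes, and $H^{p,q}(\R\bP^\infty_{tw}) = H^{p,q}(\R\bP^N_{tw})$ for all $N$ large relative to $(p,q)$.

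Finally I would build the free basis. Using surjectivity of the $\iota_N^*$, I would choose by induction on $N$ free generators $x_k^{(N)} \in H^{k,\lceil k/2\rceil}(\R\bP^N_{tw})$ compatible under restriction, and so obtain classes $\nu_k \in H^{k,\lceil k/2\rceil}(\R\bP^\infty_{tw})$. The $H^{*,*}(pt)$-module map $f \colon \bigoplus_{k\ge 0}\Sigma^{(k,\lceil k/2\rceil)}H^{*,*}(pt) \to H^{*,*}(\R\bP^\infty_{tw})$ sending the $k$-th generator to $\nu_k$ would then be checked to be an isomorphism bidegree by bidegree: in bidegree $(p,q)$ both sides vanish outside the finitely many contributing $k$, and restricting to $\R\bP^N_{tw}$ for $N$ large identifies $f$ with the isomorphism carrying the free module onto $H^{*,*}(\R\bP^N_{tw})$. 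I expect the main obstacle to be precisely this last bookkeeping step --- making the generator choices genuinely compatible along the restrictions, and accounting for the fact that $H^{k,\lceil k/2\rceil}$ is not one-dimensional when $k$ is odd (a $\rho$-multiple of $x_{k-1}$ also lands there), which requires a small change-of-basis remark --- rather than any conceptual difficulty; the vanishing of $\lim^1$ and the stabilization are then routine.
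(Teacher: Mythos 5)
Your proposal is correct, and it reaches the conclusion by a more careful route than the paper does. The paper's own proof is three sentences: $\R\bP^\infty_{tw}$ is the colimit of the $\R\bP^N_{tw}$, any nonzero differential in its cellular spectral sequence would already appear at a finite stage, and the preceding proposition rules that out --- so the spectral sequence collapses and the module is free on classes matching the cells. You instead make the passage to the colimit explicit: you extract surjectivity of each restriction $\iota_N^*$ from the vanishing of the connecting map (equivalently, the same vanishing of differentials the paper invokes), then run the Milnor $\lim^1$ sequence, verify Mittag--Leffler, and --- crucially --- observe that in each fixed bidegree $(p,q)$ only finitely many generators $x_k$ can contribute, so the tower stabilizes and the inverse limit is computed at a finite stage. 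This buys genuine rigor that the paper's argument elides: for an infinite-dimensional complex the identification of $H^{*,*}(\colim X_N)$ with the limit of the $H^{*,*}(X_N)$, and the fact that an inverse limit of free modules along these surjections is again free on compatible generators, are exactly the points where a too-quick colimit argument could fail, and your stabilization observation (top cone forces $k\le p$, bottom cone forces $\lfloor k/2\rfloor\le p-q-2$) is what makes them harmless here. Your closing remark about the two-dimensionality of $H^{k,\lceil k/2\rceil}$ for $k$ odd is a real but minor bookkeeping point, correctly diagnosed. In short: same underlying idea (everything is detected at finite stages), but your execution via the Milnor sequence is a legitimate and somewhat more scrupulous alternative to the paper's appeal to the collapsing spectral sequence of the full skeletal filtration.
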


\begin{proof}

$\R\bP^\infty_{tw}$ is the colimit of the above projective spaces.
Thus, any non-zero differential for $\R\bP^\infty_{tw}$ would induce
a non-zero differential at some finite stage.  By the above proposition, this is not the case. 

\end{proof}

\begin{lem}

As a $H^{*,*}(pt)$-module, $H^{*,*}(S^{1,1})$ is free with a single
generator $a$ in degree $(1,1)$.  As a ring, $H^{*,*}(S^{1,1}) \cong
H^{*,*}(pt)[a]/(a^2 = \rho a)$.

\end{lem}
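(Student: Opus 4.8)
The plan is to obtain the module statement essentially for free and then concentrate all the work on identifying the single ring relation. For the module structure, note that $S^{1,1}$ is the representation sphere $S^{\R^{1,1}}$, so the suspension isomorphism gives $\tilde H^{*,*}(S^{1,1}) \cong \Sigma^{(1,1)} H^{*,*}(pt)$, free over $H^{*,*}(pt)$ on one generator $a$ of degree $(1,1)$. (One can also see this from the Freeness Theorem \ref{thm:freeness}: the two-cell structure on $S^{1,1}$ — a $(0,0)$-cell and a $(1,1)$-cell — forces the cellular spectral sequence to collapse for degree reasons.) Hence $H^{*,*}(S^{1,1}) = H^{*,*}(pt)\cdot 1 \oplus \tilde H^{*,*}(S^{1,1})$ is free on $1$ and $a$, and is generated as an $H^{*,*}(pt)$-algebra by $a$; so the ring structure is completely determined once the element $a^2 \in H^{2,2}(S^{1,1})$ is identified.

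Since $a$ lies in the ideal $\tilde H^{*,*}(S^{1,1})$, so does $a^2$, and $\tilde H^{2,2}(S^{1,1}) = H^{1,1}(pt)\cdot a = \{0,\rho a\}$ is a group of order two; hence $a^2 = 0$ or $a^2 = \rho a$, and it suffices to rule out the first. For that I would restrict to the fixed set. The inclusion $\iota\colon S^0 = (S^{1,1})^{\Z/2}\inc S^{1,1}$ is a based $\Z/2$-map, so $\iota^*\colon H^{*,*}(S^{1,1})\ra H^{*,*}(S^0) = H^{*,*}(pt)$ is a ring homomorphism. The cofiber sequence $(\Z/2)_+ = S(\R^{1,1})_+ \ra D(\R^{1,1})_+ \ra S^{1,1}$ has middle map (up to $\Z/2$-homotopy) the inclusion $\iota$, so $\mathrm{cofib}(\iota) = S^{1,1}/S^0 \simeq \Sigma\bigl((\Z/2)_+\bigr)$, whence $\tilde H^{*,*}(S^{1,1}/S^0) \cong H^{*-1,*}(\Z/2)$. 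By the computation $H^{*,*}(\Z/2) = \Z/2[t,t^{-1}]$ recalled in Section \ref{sec:Prelim}, this group is concentrated on the line $p = 1$; in particular $\tilde H^{2,1}(S^{1,1}/S^0) = 0$. Feeding this into the cohomology long exact sequence of $S^0 \inc S^{1,1}\ra S^{1,1}/S^0$, the map $\iota^*\colon \tilde H^{1,1}(S^{1,1})\ra \tilde H^{1,1}(S^0) = H^{1,1}(pt)$ is a surjection, hence an isomorphism, of one-dimensional $\Z/2$-vector spaces. Thus $\iota^*(a) = \rho$ — the familiar fact that the orientation class of a representation sphere restricts on the fixed set to the Euler class of the moving part — so $\iota^*(a^2) = \iota^*(a)^2 = \rho^2 \neq 0$ while $\iota^*(0) = 0$. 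Therefore $a^2 \neq 0$, i.e. $a^2 = \rho a$, and $H^{*,*}(S^{1,1}) \cong H^{*,*}(pt)[a]/(a^2 = \rho a)$.

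The module part is formal, so the one genuine point is the computation $a^2 = \rho a$ — equivalently $\iota^*(a) = \rho$. It is worth noting that the forgetful long exact sequence of Lemma \ref{lemma:forget} does not settle this: underlying $S^{1,1}$ is just $S^1$, so $H^2_{sing}(S^1) = 0$ kills the forgetful map in the relevant degree and one extracts only $a^2 \in \{0,\rho a\}$, which is exactly what the ideal argument already gives. Routing the argument instead through the fixed-point inclusion and the cofiber sequence $S^0 \ra S^{1,1} \ra \Sigma\bigl((\Z/2)_+\bigr)$ reduces everything to the known cohomology of the free orbit, and I expect that reduction to be the heart of the proof.
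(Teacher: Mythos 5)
Your proof is correct, but it settles the key relation $a^2=\rho a$ by a genuinely different mechanism than the paper. The paper works entirely on the level of maps: it views $S^{1,1}$ as a $K(\Z(1),1)$, identifies $a$ with the identity map and $\rho$ with the inclusion of the non-basepoint fixed point, writes $a^2$ and $\rho a$ as the two inclusions of $(\R^{1,1})^+$ into $(\R^{2,2})^+$ (the diagonal and the vertical axis), and then exhibits the explicit equivariant homotopy $H(x,t)=(tx,x)$ between them. That argument never needs to know the group $\tilde H^{2,2}(S^{1,1})$ in advance and produces the relation directly as an equality of homotopy classes. You instead use the module structure to reduce to the dichotomy $a^2\in\{0,\rho a\}$ and then detect the answer by restricting to the fixed set, computing $\iota^*(a)=\rho$ from the cofiber sequence $S^0\inc S^{1,1}\ra \Sigma\bigl((\Z/2)_+\bigr)$ and the known ring $H^{*,*}(\Z/2)=\Z/2[t,t^{-1}]$, so that $\iota^*(a^2)=\rho^2\neq 0$. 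Your route is more computational and avoids any appeal to the Eilenberg--MacLane space structure of representation spheres (which the paper invokes without proof), at the cost of first needing the freeness of $\tilde H^{*,*}(S^{1,1})$ to know the target group has only two elements; the paper's route is more geometric and self-contained on that point. Your observation that the forgetful sequence of Lemma \ref{lemma:forget} cannot distinguish $0$ from $\rho a$ here (since $H^2_{sing}(S^1)=0$) is accurate and explains why some fixed-point or mapping-level input is unavoidable. All the intermediate claims you use check out: the cofiber of the fixed-point inclusion is indeed $\Sigma\bigl((\Z/2)_+\bigr)$, its reduced cohomology is concentrated in $p=1$, and $\tilde H^{*,*}$ is an ideal, so $a^2$ does lie in $H^{1,1}(pt)\cdot a$.
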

\begin{proof}

The statement about the module structure is immediate since $S^{1,1}
\cong \R\bP^1_{tw}$.

Since $S^{1,1}$ is a $K(\Z(1),1)$, we can consider $a \in
[S^{1,1},S^{1,1}]$ as the class of the identity and $\rho \in [pt,
S^{1,1}]$ as the inclusion. Then $a^2$ is the composite

$\xymatrix{a^2 \colon S^{1,1} \ar[r]^(0.45)\Delta & S^{1,1} \Smash
S^{1,1} \ar[r]^(0.6){a \Smash a} & S^{2,2} \ar[r] & K(\Z/2(2),2)}$.

\noindent Similarly, $\rho a$ is the composite

$\xymatrix{\rho a \colon S^{1,1} \ar[r] & S^{0,0} \Smash S^{1,1}
\ar[r]^(0.6){\rho \Smash a} & S^{2,2} \ar[r] & K(\Z/2(2),2)}$.

\noindent The claim is that these two maps are homotopic.
Considering the spheres involved as one point compactifications of
the corresponding representations, the map $a^2$ is inclusion of
$(\R^{1,1})^+$ as the diagonal in $(\R^{2,2})^+$ and $\rho a$ is
inclusion of $(\R^{1,1})^+$ as the vertical axis.  There is then an
equivariant homotopy $H\colon (\R^{1,1})^+ \times I \ra
(\R^{2,2})^+$ between these two maps given by $H(x,t) = (t x, x)$.

\end{proof}

From here, we are poised to compute the ring structure of the $RO(\Z/2)$-graded cohomology of each real projective space.

\begin{thm}
\label{thm:rpinfty} $H^{*,*}(\R\bP^\infty_{tw})=H^{*,*}(pt)[a,
b]/(a^2=\rho a +\tau b)$, where $\deg(a)=(1,1)$ and $\deg(b)=(2,1)$.
\end{thm}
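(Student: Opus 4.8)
The plan is to combine the module structure already established for $\R\bP^\infty_{tw}$ with a multiplicative comparison to the non-equivariant ring $H^*_{sing}(\R\bP^\infty) = \Z/2[x]$, using the forgetful long exact sequence as the bridge. By the previous proposition, $H^{*,*}(\R\bP^\infty_{tw})$ is free over $H^{*,*}(pt)$ with one generator in each degree $(n,\lceil n/2\rceil)$. The generator in degree $(1,1)$ is the class $a$ coming from the inclusion $S^{1,1}=\R\bP^1_{tw}\inc\R\bP^\infty_{tw}$, and I will let $b$ denote a chosen free generator in degree $(2,1)$. First I would verify that the free generators can be taken to be precisely the monomials $\{a^i b^j\}$: the element $a^i b^j$ has degree $(i+2j,\,i+j)$, and as $(i,j)$ ranges over $\{(0,n),(1,n-1)\}$ for each $n$ (i.e. $i\in\{0,1\}$) these degrees are exactly $(n,\lceil n/2\rceil)$. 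So it suffices to show each $a^ib^j$ with $i\in\{0,1\}$ is a free generator, equivalently that it is nonzero mod $(\rho,\tau)$, equivalently (since $\psi(\rho)=0$, $\psi(\tau)=1$) that $\psi(a^ib^j)\neq 0$ in $H^*_{sing}(\R\bP^\infty)=\Z/2[x]$ with $\deg x = 1$.

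The key computation is therefore the value of the forgetful map $\psi$ on $a$ and $b$. Since $\psi$ is a ring map and $H^*_{sing}(\R\bP^\infty_{tw})\cong H^*_{sing}(\R\bP^\infty)=\Z/2[x]$ (the underlying space, forgetting the action, is ordinary $\R\bP^\infty$), and $a\in H^{1,1}$, $b\in H^{2,1}$ must map to the unique nonzero elements in degrees $1$ and $2$, we get $\psi(a)=x$ and $\psi(b)=x^2$. Hence $\psi(a^ib^j)=x^{i+2j}\neq0$, which confirms the monomials $\{a^ib^j : i\in\{0,1\}, j\geq0\}$ are a free $H^{*,*}(pt)$-basis. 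Next I would pin down the relation. The product $a^2$ lies in $H^{2,2}(\R\bP^\infty_{tw})$, which by freeness has basis $\{\rho a,\ \tau b\}$ over $\Z/2$ in that exact degree (note $\rho\in H^{1,1}$, $\tau\in H^{0,1}$, so $\rho a$ and $\tau b$ both land in degree $(2,2)$, while $a^2$ is the only other natural candidate). So $a^2=\epsilon_1\rho a+\epsilon_2\tau b$ with $\epsilon_i\in\Z/2$. Applying $\psi$: $\psi(a^2)=x^2$, while $\psi(\rho a)=0$ and $\psi(\tau b)=x^2$, forcing $\epsilon_2=1$. To get $\epsilon_1=1$, restrict along $\R\bP^1_{tw}=S^{1,1}\inc\R\bP^\infty_{tw}$: by the earlier lemma $H^{*,*}(S^{1,1})=H^{*,*}(pt)[a]/(a^2=\rho a)$, and $b$ restricts to $0$ there for degree reasons (there is no free generator in degree $(2,1)$ on $S^{1,1}$), so the relation $a^2=\epsilon_1\rho a$ must hold on $S^{1,1}$, giving $\epsilon_1=1$.

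Finally I would assemble these into the ring isomorphism. Define $\phi\colon H^{*,*}(pt)[a,b]/(a^2=\rho a+\tau b)\to H^{*,*}(\R\bP^\infty_{tw})$ by sending $a,b$ to the chosen classes; the relation established above shows $\phi$ is well defined. Every element of the source can be rewritten, using $a^2=\rho a+\tau b$, as an $H^{*,*}(pt)$-linear combination of $\{a^ib^j : i\in\{0,1\},\ j\geq0\}$, so the source is free over $H^{*,*}(pt)$ on this set; $\phi$ carries this basis bijectively onto an $H^{*,*}(pt)$-basis of the target, hence $\phi$ is an isomorphism of $H^{*,*}(pt)$-modules, and being a ring map it is a ring isomorphism. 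The main obstacle is the bookkeeping in the two degree-counting steps—checking that in degree $(2,2)$ the only classes are $\rho a$ and $\tau b$ (so that $a^2$ is genuinely a combination of them, with no contribution one might have missed), and confirming there is no $(2,1)$-generator on $S^{1,1}$ so that $b$ restricts to zero; both follow from the explicit module descriptions already in hand, but they are the places where one must be careful that the bigrading is being read off correctly.
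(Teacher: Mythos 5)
Your proposal is correct and takes essentially the same route as the paper: use the known free module structure, identify the monomials $a^ib^j$ ($i\in\{0,1\}$) as generators via the forgetful map $\psi$ to $H^*_{sing}(\R\bP^\infty)$, and pin down $a^2=\rho a+\tau b$ by combining $\psi$ (which detects the $\tau b$ term) with restriction to $S^{1,1}=\R\bP^1_{tw}$ (which detects the $\rho a$ term).
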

\begin{proof}
It remains to compute the multiplicative
structure of the cohomology ring. Denote by $a = a_{(1, 1)}$, and $b=a_{(2,1)}$.
By Lemma \ref{lemma:forget}, the forgetful map $\psi\colon  H^{*,*}(\R
\bP^\infty_{tw}) \ra
H_{sing}^*(\R\bP^\infty)$ maps $\psi(a) = z$ and $\psi(b) = z^2$
where $z \in H^1_{sing}(\R\bP^\infty)$ is the ring generator for
non-equivariant cohomology.   Since $\psi$ is a homomorphism of rings,
$\psi(ab)=z^3 \neq 0$, and so the product $ab$ is nonzero in $H^{*,*}(\R
\bP^\infty_{tw})$.
Observe that $\rho b$ is also in degree $(3,2)$ in $H^{*,*}(\R
\bP^\infty_{tw})$, but
$\psi(\rho b) = 0$ since $\psi(\rho)=0$.  Thus $ab$ and $\rho b$
generate $H^{*,*}(\R
\bP^\infty_{tw})$ in degree $(3, 2)$.  Also, $\psi(b^2)=z^4$, and so
$b^2$ in nonzero in $H^{*,*}(\R
\bP^\infty_{tw})$.  This means that $b^2$ is the unique
nonzero element of $H^{*,*}(\R
\bP^\infty_{tw})$ in degree $(4,2)$.  Inductively, it can be
shown that if $n$ is even the unique nonzero element of $R$ in
degree $(n,\frac{n}{2})$ is $b^{n/2}$ and that if $n$ is odd, then
$ab^{(n-1)/2}$ is linearly independent from $\rho b^{(n-1)/2}$.

Now, $a^2 \in H^{2,2}(\R \bP^\infty_{tw})$ and so is a linear
combination of $\rho a$ and $\tau b$.  Since $\psi(a^2)=z^2$, there
must be a $\tau b$ term in the expression for $a^2$.  Also, upon
restriction to $\R\bP^1_{tw}=S^{1,1}$, $a^2$ restricts to $a^2=\rho
a$.  Thus, $a^2=\rho a + \tau b \in H^{*,*}(\R
\bP^\infty_{tw})$.

\end{proof}

\begin{thm} Let $n > 2$.  If $n$ is even, then $H^{*,*}(\bP(\R^{n,\frac{n}{2}})) = H^{*,*}(pt)[a_{1,1},b_{2,1}]/ \sim$ where the generating relations are $a^2 = \rho a + \tau b$ and $b^k =0$ for $k \geq \frac{n}{2}$.  If $n$ is odd, then $H^{*,*}(\bP(\R^{n,\frac{n-1}{2}})) = H^{*,*}(pt)[a_{1,1},b_{2,1}]/ \sim$ where the generating relations are $a^2 = \rho a + \tau b$,  $b^k =0$ for $k \geq \frac{n+1}{2}$, and $a\cdot b^{(n-1)/2}=0$.

\end{thm}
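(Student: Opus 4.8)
The plan is to read off the ring structure by restricting from $H^{*,*}(\R\bP^\infty_{tw})$---computed in Theorem~\ref{thm:rpinfty}---and then using the already-known additive structure of the finite projective space to pin down exactly which elements must vanish. Observe first that $\bP(\R^{n,\lfloor n/2\rfloor})$ is precisely $\R\bP^{n-1}_{tw}$, so by the proposition computing its additive structure, $H^{*,*}(\bP(\R^{n,\lfloor n/2\rfloor}))$ is a free $H^{*,*}(pt)$-module with exactly $n$ generators, one in each bidegree $(k,\lceil k/2\rceil)$ for $k=0,\dots,n-1$. The cellular inclusion $\iota\colon\R\bP^{n-1}_{tw}\inc\R\bP^\infty_{tw}$ of the $(n-1)$-skeleton induces a ring homomorphism $\iota^*$ out of $H^{*,*}(\R\bP^\infty_{tw})=H^{*,*}(pt)[a,b]/(a^2=\rho a+\tau b)$. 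I set $b:=\iota^*(b)$ and provisionally $a:=\iota^*(a)$; then the relation $a^2=\rho a+\tau b$ holds downstairs automatically, being the image of the corresponding relation upstairs.

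Next I would identify the image and kernel of $\iota^*$. For $k\leq n-1$ the free generator of $H^{*,*}(\R\bP^\infty_{tw})$ in bidegree $(k,\lceil k/2\rceil)$ is $b^{k/2}$ (for $k$ even) or $ab^{(k-1)/2}$ (for $k$ odd); by naturality of the forgetful map $\psi$ of Lemma~\ref{lemma:forget} its image in $H^*_{sing}(\R\bP^{n-1})$ is $z^k\neq0$, while a short bidegree count against the additive structure shows that in this bidegree the only possible ambiguity for its image under $\iota^*$ is a $\rho$-multiple of a lower free generator. Since $\psi$ annihilates $\rho$ it detects the leading term, so $\iota^*$ carries each such class onto a free generator of $H^{*,*}(\bP(\R^{n,\lfloor n/2\rfloor}))$; in particular $\iota^*$ is surjective. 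At the other extreme the monomial $b^{n/2}$ (for $n$ even), respectively $b^{(n+1)/2}$ (for $n$ odd), lies in a bidegree in which the additive structure forces $H^{*,*}(\bP(\R^{n,\lfloor n/2\rfloor}))$ to vanish, yielding $b^k=0$ for all $k\geq n/2$, respectively all $k\geq(n+1)/2$.

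For $n$ odd there remains one more relation to locate. The element $ab^{(n-1)/2}$ lies in bidegree $(n,(n+1)/2)$, where $H^{*,*}(\bP(\R^{n,\lfloor n/2\rfloor}))$ is one-dimensional and spanned by $\rho b^{(n-1)/2}$; hence $ab^{(n-1)/2}$ equals either $0$ or $\rho b^{(n-1)/2}$. Working modulo $2$, in the second case I replace $a$ by $a+\rho$: this still satisfies $(a+\rho)^2=\rho(a+\rho)+\tau b$, and now $(a+\rho)b^{(n-1)/2}=\rho b^{(n-1)/2}+\rho b^{(n-1)/2}=0$. Either way one ends with generators $a,b$ for which exactly the asserted relations hold.

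It remains to check there are no further relations, which I would do by counting. Using $a^2=\rho a+\tau b$ to rewrite any $a^i b^j$ with $i\geq2$ as an $H^{*,*}(pt)$-combination of the $b^j$ and $ab^j$, and then imposing the $b$-relations and (for $n$ odd) $ab^{(n-1)/2}=0$, the abstract ring $A_n:=H^{*,*}(pt)[a,b]/\!\sim$ is spanned over $H^{*,*}(pt)$ by exactly $n$ monomials, lying in precisely the bidegrees $(k,\lceil k/2\rceil)$, $k=0,\dots,n-1$. The induced ring map $A_n\to H^{*,*}(\bP(\R^{n,\lfloor n/2\rfloor}))$ carries this spanning set to a set of free generators---by the second paragraph each goes to a free generator possibly corrected by a $\rho$-multiple of a lower generator, so the comparison matrix is triangular with units on the diagonal---and comparison with the free module on $n$ symbols in these bidegrees then forces the map to be an isomorphism. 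The step I expect to be the main obstacle is precisely the location of the last relation $ab^{(n-1)/2}=0$ for $n$ odd: the restriction from $\R\bP^\infty_{tw}$ only pins that product down inside a one-dimensional group, and it is the characteristic-$2$ adjustment of $a$---harmless for the relation $a^2=\rho a+\tau b$---that produces the clean presentation. Keeping the bidegree bookkeeping straight in the vanishing and leading-term checks, and making the triangularity of the final change of basis precise, are the remaining points requiring care.
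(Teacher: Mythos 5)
Your proposal is correct and follows essentially the same route as the paper: define $a$ and $b$ by restriction from $H^{*,*}(\R\bP^\infty_{tw})$, inherit the relation $a^2=\rho a+\tau b$, kill the relevant powers of $b$ for degree reasons against the known free module structure of $H^{*,*}(\R\bP^{n-1}_{tw})$, and verify by a (triangular) counting argument that no further relations occur. The only point of divergence is the relation $ab^{(n-1)/2}=0$ for $n$ odd: the paper argues that this class is a free generator of $H^{*,*}(\R\bP^\infty_{tw})$ corresponding to the $n$-cell and hence restricts to zero on the $(n-1)$-skeleton, whereas you allow the a priori possibility that it restricts to $\rho b^{(n-1)/2}$ and absorb it by the characteristic-$2$ substitution $a\mapsto a+\rho$ (which preserves $a^2=\rho a+\tau b$); both arguments are valid, the mild cost of yours being that $a$ need no longer be the literal restriction of the class from $\R\bP^\infty_{tw}$.
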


\begin{proof}
Only the multiplicative structure needs to be checked since the
cohomology is free and the generators given above are in the correct
degrees.  Considering the restriction of the corresponding
classes $a$ and $b$ in $H^{*,*}(\R\bP^\infty_{tw})$, the relation
$a^2 = \rho a + \tau b$ is immediate. The relations $b^k =0$ for $k
> \frac{n}{2}$ when $n$ is even and $b^k =0$ for $k \geq
\frac{n+1}{2}$ when $n$ is odd follow for degree reasons. Also,
since the class $ab^{(n-1)/2} \in H^{*,*}(\R\bP^\infty_{tw})$ is a free
generator, it restricts to zero in
$H^{*,*}(\bP(\R^{n,\frac{n-1}{2}}))$.  Thus $ab^{(n-1)/2} = 0 \in
H^{*,*}(\bP(\R^{n,\frac{n-1}{2}}))$.

\end{proof}

We can also compute the cohomology of projective spaces associated
to arbitrary representations.  The following easy lemma will be
useful.  In particular, it allows us to only consider the projective
spaces associated to representations $V \cong \R^{p,q}$ where $q\leq
p/2$.

\begin{lem}
\label{lemma:easyRP} $\bP(\R^{p,q}) \cong \bP(\R^{p,p-q})$.

\end{lem}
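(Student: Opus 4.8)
The plan is to realize $\R^{p,p-q}$ as the tensor twist of $\R^{p,q}$ by the sign representation, and then to observe that tensoring a representation by a one-dimensional representation does not alter the induced $\Z/2$-action on its projectivization.

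First I would set $L = \R^{1,1}$, the sign representation, and record the identities $\R^{1,0}\tens L \cong \R^{1,1}$ and $\R^{1,1}\tens L \cong \R^{1,0}$ (the latter because $\R^{1,1}\tens\R^{1,1}$ is one-dimensional with trivial action). Writing $\R^{p,q} \cong (\R^{1,0})^{\oplus(p-q)}\oplus(\R^{1,1})^{\oplus q}$ and distributing $\tens L$ over the direct sum interchanges the trivial and twisted summands, so $\R^{p,q}\tens L \cong \R^{p,p-q}$ as $\Z/2$-representations. Any such linear isomorphism of representations induces a $\Z/2$-homeomorphism $\bP(\R^{p,q}\tens L)\cong \bP(\R^{p,p-q})$.

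Next I would show $\bP(V)\cong\bP(V\tens L)$ for an arbitrary finite-dimensional $\Z/2$-representation $V$. Fixing a basis vector $\ell_0$ of $L$ identifies the underlying vector space of $V\tens L$ with that of $V$ via $v\tens\ell_0\mapsto v$, and under this identification the $\Z/2$-action $g\cdot(v\tens\ell_0) = gv\tens g\ell_0 = -(gv)\tens\ell_0$ becomes the ``negated'' action $v\mapsto -(gv)$ on $V$. But for every nonzero $v$ the lines through $gv$ and through $-(gv)$ coincide, so this negated action and the original action of $\Z/2$ induce the same action on the set of lines; that is, $\bP(V\tens L)$ and $\bP(V)$ are literally the same $\Z/2$-space. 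Taking $V=\R^{p,q}$ and combining with the previous paragraph yields $\bP(\R^{p,q})\cong\bP(\R^{p,q}\tens L)\cong\bP(\R^{p,p-q})$.

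There is no substantive obstacle here; the only point needing care is the bookkeeping of the $\Z/2$-action when passing between $V\tens L$ and $V$, which the computation above settles. (One could alternatively write down an explicit equivariant homeomorphism in coordinates, but the tensor-twist description is cleaner and makes it transparent that the homeomorphism is compatible with the Schubert-type cell structures, which is how Lemma~\ref{lemma:easyRP} will be used.)
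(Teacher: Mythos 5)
Your proof is correct and is essentially the paper's argument in coordinate-free clothing: the paper writes down the identity map on coordinates $[x_1,\dots,x_p]\mapsto[x_1,\dots,x_p]$ and observes it is equivariant because the two actions differ by a global sign, which is exactly your observation that tensoring with $\R^{1,1}$ changes the action on lines not at all while converting $\R^{p,q}$ into $\R^{p,p-q}$.
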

\begin{proof}
Consider a basis of $\R^{p,q}$ in which the first $q$ coordinates
have the nontrivial action, and a basis of $\R^{p,p-q}$ in which the
first $q$ coordinates are fixed by the action.  Then the map $f
\colon \bP(\R^{p,q}) \ra \bP(\R^{p,p-q})$ that sends the span of
$(x_1, \dots , x_p)$ to the span of $(x_1, \dots , x_p)$ is
equivariant.  It is clearly a homeomorphism. 
\end{proof}

\begin{lem}If $q \leq p/2$, then $\bP(\R^{p,q})$ has a cell structure with a single cell in each dimension $(0,0)$, $(1,1)$, $(2,1)$, $(3,2)$, $(4,2), \dots,$ $(2q-1,q)$, $(2q,q)$, $\dots,$ $(p-1,q)$.
\end{lem}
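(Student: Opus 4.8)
The plan is to read the cell structure straight off Proposition~\ref{prop:schubertcells}, since $\bP(\R^{p,q}) = G_1(\R^{p,q})$ is the Grassmannian of lines. With $n=1$ the Schubert symbols are the singletons $\sigma = (\sigma_1)$ with $1\le \sigma_1\le p$, so there are exactly $p$ Schubert cells, and the first formula of the proposition gives the topological dimension of $e((\sigma_1),\varphi)$ as $\sigma_1-1$, i.e.\ the values $0,1,\dots,p-1$. These are precisely the first coordinates in the claimed list, so everything reduces to choosing a flag symbol and reading off the twist count of each cell.

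Concretely, I would take the maximally front-loaded twisted flag symbol $\varphi = (2,4,6,\dots,2q)$, exactly as in the cell structures already used for $\R\bP^n_{tw}$ and $\R\bP^\infty_{tw}$. The hypothesis $q\le p/2$ is used in exactly one place: it is the condition $2q\le p$ that makes $\varphi$ a legitimate flag symbol for $\R^{p,q}$. Writing $k=\sigma_1-1$, one then evaluates the twist formula of Proposition~\ref{prop:schubertcells} by splitting into two cases according to whether the pivot column $\sigma_1$ is twisted (equivalently $\sigma_1\in\varphi$, i.e.\ $\sigma_1$ even and $\sigma_1\le 2q$) or fixed. In the twisted case the formula counts the fixed columns among the first $\sigma_1-1$, and in the fixed case it counts the twisted columns among the first $\sigma_1-1$; in either case a one-line count yields the number of twists as $\min\{\lceil k/2\rceil,\ q\}$. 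Hence the $p$ Schubert cells occupy the bidegrees $(k,\min\{\lceil k/2\rceil,q\})$ for $k=0,1,\dots,p-1$, which is the displayed list $(0,0),(1,1),(2,1),(3,2),(4,2),\dots$ with the second coordinate stabilizing at $q$ once $k\ge 2q$.

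I do not expect a real obstacle here: the entire content is already packaged in Proposition~\ref{prop:schubertcells}, and the only things to watch are the elementary counting in the twist formula and the observation that $q\le p/2$ is precisely the feasibility condition for the flag $\varphi=(2,4,\dots,2q)$. (For $q>p/2$ one would first invoke Lemma~\ref{lemma:easyRP} to reduce to this range, but that is not needed for the present statement.)
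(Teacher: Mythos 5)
Your proof is correct and is exactly the paper's argument: the paper's proof consists of the single sentence that the result follows from Proposition \ref{prop:schubertcells} using the flag symbol $\varphi=(2,4,\dots,2q)$, and you supply the same flag symbol together with the (correct) elementary count of twists that the paper leaves implicit.
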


For example, $\bP(\R^{4,1})$ has a single cell in each dimension
$(0,0)$, $(1,1)$, $(2,1)$, and $(3,1)$.

\begin{proof}
The result follows by Proposition \ref{prop:schubertcells} using the flag symbol $\varphi=(2,4,\dots,2q)$. 

\end{proof}

\begin{lem}
\label{lemma:RPmodule} As a $H^{*,*}(pt)$-module,
$H^{*,*}(\bP(\R^{p,q}))$ is free with a single generator in
degrees $(0,0)$, $(1,1)$, $(2,1)$, $(3,2)$, $(4,2), \dots,$
$(2q,q)$, $(2q+1,q), \dots,$ $(p-1,q)$.

\end{lem}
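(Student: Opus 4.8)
The plan is to re-run, in the case $n=1$, the argument already used above for $\R\bP^n_{tw}$: fix a convenient cell structure, use the generator count of Theorem~\ref{thm:cellbij} to force its cellular spectral sequence to collapse, and read off the generators from the cells. By Lemma~\ref{lemma:easyRP} we may assume $q\le p/2$, so the preceding lemma applies and equips $\bP(\R^{p,q})=G_1(\R^{p,q})$ with a $\text{Rep}(\Z/2)$-structure, via the flag symbol $\varphi=(2,4,\dots,2q)$, having exactly one cell in each of the dimensions $(0,0),(1,1),(2,1),(3,2),(4,2),\dots,(p-1,q)$ --- altogether $p$ cells, as many as the Schubert cells of $G_1(\R^{p,q})$. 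Since $\bP(\R^{p,q})$ is a connected, locally finite, finite-dimensional $\text{Rep}(\Z/2)$-complex, Theorem~\ref{thm:freeness} gives that $H^{*,*}(\bP(\R^{p,q}))$ is a free $H^{*,*}(pt)$-module; let $k$ be its number of free generators.

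First I would show $k=p$. Ordering the cells by increasing dimension and running the one-at-a-time filtration, the analysis behind Theorem~\ref{thm:freeness} --- the single-cell attachment Lemma and Theorem above, together with the ramp discussion in that proof --- shows that each attachment either adds one free generator (in the bidegree of the new cell if its attaching differential vanishes, or in a shifted bidegree if a bottom-cone differential is present) or, in the alternative where $\nu$ kills a free generator in dimension $(p-1,q)$, destroys two. Hence $k=p-2c$, where $c$ counts the attachments of the second kind. On the other hand Theorem~\ref{thm:cellbij} (with $n=1$) identifies $k$ with the number of Schubert cells, namely $p$. Therefore $c=0$, so no ``kill'' ever occurs.

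It then remains to rule out the bottom-cone differentials, which preserve the generator count but would shift bidegrees; this is where the precise cell dimensions enter, and it is the one step that needs real work. Arguing by induction along the one-at-a-time filtration --- so that all generators present when the cell $\nu$ of dimension $(P,Q)$ is attached already lie in cell bidegrees --- a nonzero bottom-cone differential into $\nu$ would, by the degree formula in the single-cell attachment Theorem above, require a cell $\omega$ of dimension $(P-n-1,\,Q-n-m-2)$ for some integers $n,m\ge 0$. Plugging in the two shapes of cell dimension that occur here, $(j,\lceil j/2\rceil)$ for $j\le 2q$ and $(j,q)$ for $j\ge 2q$, one checks by a short inequality (precisely the bookkeeping displayed in Figures~\ref{fig:nodd} and~\ref{fig:neven}) that no such pair of cells can exist. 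Hence every differential in the cellular spectral sequence vanishes, $E_1=E_\infty$, each cell contributes a free generator in its own bidegree, and the degree list is exactly the one asserted. This last degree check is the only delicate point; everything else transcribes the $\R\bP^n_{tw}$ argument.
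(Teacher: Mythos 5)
Your proposal is correct and follows essentially the same route as the paper: the paper's proof is the single sentence that, given the cell structure from the preceding lemma, Theorem \ref{thm:cellbij} forces all differentials in the cellular spectral sequence to vanish. In fact you are more careful than the paper at the one genuinely delicate point --- the generator count from Theorem \ref{thm:cellbij} only rules out the ``killing'' (top-cone) differentials, and your explicit inductive degree check that the quantity $j-Q_j$ is nondecreasing along the cells (so no bottom-cone, degree-shifting differential can occur) supplies the step the paper leaves to the analogous figures for $\R\bP^n_{tw}$.
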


\begin{proof}

Using the cell structure in the previous lemma, Theorem \ref{thm:cellbij} implies there can be no nonzero differentials in the cellular spectral sequence.

\end{proof}

The ring structure of the other projective spaces can be computed by considering the restriction of
$H^{*,*}(\R\bP^\infty_{tw})$ to $H^{*,*}(\bP(\R^{p,q}))$.
\begin{prop}
\label{prop:ringRP} $H^{*,*}(\bP(\R^{p,q}))$ is a truncated
polynomial algebra over $H^{*,*}(pt)$ on generators in degrees
$(1,1)$, $(2,1)$, $(2q+1,q)$, $(2q+2,q), \dots,$ $(p-1,q)$, subject
to the relations determined by the restriction of

$H^{*,*}(\R\bP^\infty_{tw})$ to $H^{*,*}(\bP(\R^{p,q}))$.

\end{prop}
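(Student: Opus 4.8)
The plan is to take the underlying $H^{*,*}(pt)$-module as given by Lemma~\ref{lemma:RPmodule} — it is free with one generator $\omega_n$ in each dimension $0\le n\le p-1$, of weight $\lceil n/2\rceil$ when $n\le 2q$ and of weight $q$ when $n>2q$ — and to read off the ring structure by restricting from $\R\bP^\infty_{tw}$. When $q=p/2$ or $q=(p-1)/2$ the statement is the content of the two preceding theorems, so assume $p\ge 2q+2$. Choose an equivariant linear embedding $\R^{p,q}\hookrightarrow\cU$; this gives $j\colon\bP(\R^{p,q})\hookrightarrow\R\bP^\infty_{tw}$ and a map of $H^{*,*}(pt)$-algebras $j^*\colon H^{*,*}(\R\bP^\infty_{tw})\to H^{*,*}(\bP(\R^{p,q}))$. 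Put $a=j^*(a)$ and $b=j^*(b)$ with $a,b$ as in Theorem~\ref{thm:rpinfty}; since $j^*$ is a ring map, $a^2=\rho a+\tau b$ in $H^{*,*}(\bP(\R^{p,q}))$.

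First I would identify $a,b$, and the monomials $b^k$, $ab^k$ through dimension $2q$, with the low-dimensional module generators. Exactly as in the proof of Theorem~\ref{thm:cellbij}, multiplication by $\rho$ is injective in large weight, so the forgetful map $\psi\colon H^{*,*}(\bP(\R^{p,q}))\to H^{*}_{sing}(\bP(\R^{p,q}))\cong\Z/2[z]/(z^p)$ is surjective and sends each $\omega_n$ to $z^n$. By naturality $\psi(a)=z$ and $\psi(b)=z^2$, so $\psi(b^k)=z^{2k}$ and $\psi(ab^k)=z^{2k+1}$; a short degree count from the cone shape of $H^{*,*}(pt)$ shows that in degree $(n,\lceil n/2\rceil)$ with $n\le 2q$ every class other than $\omega_n$ is $\rho$-divisible, hence in $\ker\psi$. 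So for $0\le n\le 2q$ the monomial $b^{n/2}$ ($n$ even) or $ab^{(n-1)/2}$ ($n$ odd) is a legitimate choice of the dimension-$n$ module generator, and $a,b$ produce the bottom $2q+1$ generators.

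Next I would show the remaining generators are genuinely new. Suppose $\omega_n=j^*(\eta)$ for some $\eta\in H^{n,q}(\R\bP^\infty_{tw})$ with $2q+1\le n\le p-1$; applying $\psi$ and using its naturality, $z^n=\psi(\omega_n)$ equals the restriction of $\psi(\eta)$. But $q<\lceil n/2\rceil$, so by the free module structure of $\R\bP^\infty_{tw}$ the group $H^{n,q}(\R\bP^\infty_{tw})$ sits entirely in the bottom-cone part of the module, a summand on which $\psi$ vanishes; hence $\psi(\eta)=0$ and $z^n=0$ in $\Z/2[z]/(z^p)$, absurd for $n<p$. Therefore generators $c_{2q+1},\dots,c_{p-1}$ in degrees $(2q+1,q),\dots,(p-1,q)$ must be adjoined. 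Finally, any product among $a,b,c_{2q+1},\dots,c_{p-1}$ lands in a group which, by freeness, is a finite sum of $H^{*,*}(pt)$-shifts of the $\omega_n$, so is determined up to finitely many $\Z/2$-coefficients; these are fixed by degree and dimension reasons (products into dimension $\ge p$ vanish, and $b^k$ for $k>q$ and $ab^k$ for $k\ge q$ collapse onto $\tau$-multiples of a single $c_n$) together with applying $j^*$ to the corresponding identities in $H^{*,*}(\R\bP^\infty_{tw})$, for instance $b^{q+1}=j^*(b^{q+1})$. Reassembling these gives the claimed truncated polynomial presentation whose relations come from the restriction of $H^{*,*}(\R\bP^\infty_{tw})$.

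The step I expect to be the main obstacle is this last bookkeeping: verifying that the finitely many exceptional products are pinned down exactly by the listed relations and that no further relations are hidden, i.e.\ that the given presentation is complete. By contrast, identifying $a,b$ with the low generators and showing $c_{2q+1},\dots,c_{p-1}$ cannot be pulled back from $\R\bP^\infty_{tw}$ are routine given Theorem~\ref{thm:cellbij} and the fact that $\psi$ annihilates the bottom cone.
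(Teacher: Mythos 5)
Your proposal is correct and follows the same route the paper intends: the paper in fact states this proposition with no proof at all, relying on Lemma~\ref{lemma:RPmodule} for the additive structure and on the preceding remark that the ring structure is read off from the restriction of $H^{*,*}(\R\bP^\infty_{tw})$, which is exactly your argument carried out in detail. Your identification of the low module generators with $b^{k}$ and $ab^{k}$ via the forgetful map $\psi$, and your observation that $H^{n,q}(\R\bP^\infty_{tw})$ is annihilated by $\psi$ so that the classes in degrees $(n,q)$ for $n\ge 2q+1$ must be adjoined as new generators, supply more justification than the paper offers; the residual ambiguity you flag at the end (products involving the new generators $c_n$, which are \emph{not} in the image of $j^*$ and hence not directly controlled by relations in $H^{*,*}(\R\bP^\infty_{tw})$) is glossed over by the paper as well, as its worked example for $\bP(\R^{4,1})$ shows.
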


For example, consider $\bP(\R^{4,1})$.  By the above proposition, $H^{*,*}(\bP(\R^{4,1}))$ is generated by classes
$a_{1,1}$, $b_{2,1}$, and $c_{3,1}$.  The classes $a$
and $b$ in $H^{*,*}(\R\bP^\infty_{tw})$ restrict to $a$ and $b$
respectively, so $a^2 = \rho a + \tau b$ in
$H^{*,*}(\bP(\R^{4,1}))$.  Now, $ab$ has degree $(3,2)$ and so $ab =
? \rho b + ? \tau c$.  However, the product $ab$ in
$H^{*,*}(\R\bP^\infty_{tw})$ restricts to the class $\tau c$.  Since
restriction is a map of rings, $ab = \tau c$ in
$H^{*,*}(\bP(\R^{4,1}))$.  Similar considerations show that $bc =0$
and $c^2=0$.  Thus $H^{*,*}(\bP(\R^{4,1}))=H^{*,*}(pt)[a_{1,1},
b_{2, 1}, c_{3,1}]/\sim$, where the generating relations are $a^2 =
\rho a + \tau b$, $ab = \tau c$, $bc=0$, and $c^2=0$.

In some cases, the Freeness Theorem is enough to determine the
additive structure of the $RO(\Z/2)$-graded cohomology of Grassmann
manifolds.

\begin{prop} $G_2(\R^{p,1})$ has a $\text{Rep}(\Z/2)$-complex structure so that
$H^{*,*}(G_n(\R^{p,1}))$ is a free $H^{*,*}(pt)$-module on
generators whose degree match the dimensions of the cells.
\end{prop}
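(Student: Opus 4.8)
The plan is to exhibit an explicit $\text{Rep}(\Z/2)$-cell structure on $G_2(\R^{p,1})$ via Schubert cells and then argue, as in the proof of the propositions for $\R\bP^n_{tw}$, that the cellular spectral sequence must collapse so that the cohomology generators are in bijective correspondence with the cells. First I would apply Proposition~\ref{prop:schubertcells} with a fixed flag symbol for $\R^{p,1}$ — namely $\varphi=(1)$ or $\varphi=(p)$, the only choices available when $q=1$ — and compute the bidegrees $(a,b)$ of each Schubert cell $e(\sigma,\varphi)$ as $\sigma=(\sigma_1,\sigma_2)$ ranges over all Schubert symbols with $1\le\sigma_1<\sigma_2\le p$. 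The first coordinate $a=(\sigma_1-1)+(\sigma_2-2)$ is the usual nonequivariant dimension, and the formula for $b$ will record how many of the two pivot positions, and the free $\ast$-entries to their left, carry the twisted action; this is a short bookkeeping computation.

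Next, since $G_2(\R^{p,1})$ is a $\text{Rep}(\Z/2)$-complex, Theorem~\ref{thm:freeness} gives that $H^{*,*}(G_2(\R^{p,1}))$ is a free $H^{*,*}(pt)$-module; say it has $k$ generators while the number of Schubert $2$-planes in $\R^p$ is $m=\binom{p}{2}$. The count $k\le m$ is automatic because each cell contributes at most one generator and differentials can only decrease the count. For the reverse inequality $k\ge m$ I would invoke Theorem~\ref{thm:cellbij}, which already establishes exactly this bijective correspondence for \emph{all} Grassmannians $G_n(\R^{u,v})$; thus the content of the present proposition is really just the assertion that there is a cell structure (the Schubert one, for $q=1$) whose cell dimensions match the degrees of the generators. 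So the proof reduces to: (i) name the Schubert cell structure from Proposition~\ref{prop:schubertcells}; (ii) cite Theorem~\ref{thm:cellbij} to conclude freeness with one generator per cell; (iii) observe that, as in Figures~\ref{fig:nodd} and~\ref{fig:neven}, any nonzero differential in the cellular spectral sequence for this particular cell structure would strictly reduce the number of generators, contradicting (ii), so the generator degrees coincide with the cell dimensions.

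I expect the main obstacle to be purely organizational rather than deep: one must make sure that the Schubert cell structure coming from the flag symbol for $\R^{p,1}$ really is a legitimate $\text{Rep}(\Z/2)$-complex in the sense of Section~\ref{sec:Prelim} (cells attached one skeleton at a time, orbits in dimension zero), which is covered by the discussion preceding Proposition~\ref{prop:schubertcells}, and that the spectral-sequence collapse argument is applied to \emph{this} cell structure and not merely to some abstract one. A secondary point to be careful about is that the statement as written says $G_2$ but the conclusion is phrased for $G_n$; I would either restrict the conclusion to $n=2$ or note that the identical argument (Proposition~\ref{prop:schubertcells} plus Theorem~\ref{thm:cellbij}) works for every $n$ with $q=1$, since nothing in the argument used $n=2$. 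Beyond that, the only computation is writing down the list of bidegrees $(a,b)$ explicitly for small cases to confirm the pattern, which is routine.
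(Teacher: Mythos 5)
There is a genuine gap, and it sits exactly at the point your proposal treats as routine. Your step (iii) asserts that any nonzero differential in the cellular spectral sequence would strictly reduce the number of generators, so that Theorem \ref{thm:cellbij} forces all differentials to vanish. That is false in general: by the theorem of Section \ref{sec:Freeness}, a differential hitting the \emph{bottom cone} of the new cell produces a module that is still free with the \emph{same} number of generators, only in shifted bidegrees. Theorem \ref{thm:cellbij} only pins down the count, not the degrees, so it can rule out ``killing'' differentials but not degree-shifting ones. To rule those out you must check, for the specific cell structure you chose, that no generator sits in a bidegree $(p-n-1,\,q-n-m-2)$ relative to a cell in bidegree $(p,q)$ -- i.e.\ that the cells' twists are arranged so the bottom cone of each new cell is unreachable. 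This is precisely the content of the paper's one-line verification, and it is where the choice of flag symbol enters.

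That brings up the second problem: your claim that $\varphi=(1)$ or $\varphi=(p)$ are the only flag symbols for $\R^{p,1}$ is incorrect (the constraint is $1\le\varphi_1\le p$; the stated bound ``$\le q$'' in the definition is a typo, as the example $\varphi=(1,3,4)$ for $\R^{5,3}$ shows). The paper uses $\varphi=(2)$, and this choice is essential. With $\varphi=(2)$ every cell other than the $(0,0)$-cell has exactly one or two twists, and one checks that no cell lies in the bottom cone of another, so no shifting can occur and the cellbij count finishes the argument. With your $\varphi=(1)$ the cells of $G_2(\R^{4,1})$, for instance, sit in bidegrees $(0,0),(1,0),(2,0),(2,2),(3,2),(4,2)$; here the $(1,0)$-cell \emph{can} map to the bottom cone of the $(2,2)$-cell, and in fact it must, since the actual generator degrees are $(0,0),(1,1),(2,1),(2,1),(3,1),(4,2)$. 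So for the cell structure you propose the conclusion of the proposition is simply false, and the proof cannot be repaired without changing the flag. (Your observation that the statement's $G_2$/$G_n$ mismatch should be resolved to $n=2$ is fair, but secondary.)
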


\begin{proof}
Using the flag symbol $\varphi=(2)$, every cell, except the
$(0,0)$-cell, has either one or two twists. The cells are in
bidegrees so that there can be no dimension shifting in the cellular
spectral sequence. The result now follows by Theorem
\ref{thm:cellbij}. 
\end{proof}

For example, $H^{*,*}(G_2(\R^{4,1});\underline{\Z/2})$ is a free
$H^{*,*}(pt;\underline{\Z/2})$-module with generators in degrees
$(0,0)$, $(1,1)$, $(2,1)$, $(2,1)$, $(3,1)$, and $(4,2)$ (see Figure
\ref{fig:g2r41}).

\begin{figure}[htbp]

\centering

\begin{picture}(330,230)(-150,-110)
\put(-110,-50){\vector(1,0){235}}
\put(-50,-100){\vector(0,1){200}}

\put(-50, -51){\line(0,1){70}} \put(-50, -51){\line(1,1){150}}
\put(-50, -91){\line(0,-1){20}} \put(-50, -91){\line(-1,-1){20}}
\put(-50, -51){\circle*{2}}

\put(-28, -31){\line(0,1){130}} \put(-28, -31){\line(1,1){130}}
\put(-28, -71){\line(0,-1){40}} \put(-28, -71){\line(-1,-1){40}}
\put(-28, -31){\circle*{2}}

\put(-10, -31){\line(0,1){130}} \put(-10, -31){\line(1,1){110}}
\put(-10, -71){\line(0,-1){40}} \put(-10, -71){\line(-1,-1){40}}
\put(-10, -31){\circle*{2}}

\put(-8, -31){\line(0,1){130}} \put(-8, -31){\line(1,1){110}}
\put(-8, -71){\line(0,-1){40}} \put(-8, -71){\line(-1,-1){40}}
\put(-8, -31){\circle*{2}}

\put(10, -31){\line(0,1){130}} \put(10, -31){\line(1,1){90}}
\put(10, -71){\line(0,-1){40}} \put(10, -71){\line(-1,-1){40}}
\put(10, -31){\circle*{2}}

\put(32, -11){\line(0,1){110}} \put(32, -11){\line(1,1){70}}
\put(32, -51){\line(0,-1){60}} \put(32, -51){\line(-1,-1){60}}
\put(32, -11){\circle*{2}}

\multiput(-90,-51)(20,0){11}{\line(0,1){3}}
\multiput(-52,-90)(0,20){10}{\line(1,0){3}}
\put(-46,-57){$\scriptscriptstyle{0}$}
\put(-26,-57){$\scriptscriptstyle{1}$}
\put(-6,-57){$\scriptscriptstyle{2}$}
\put(14,-57){$\scriptscriptstyle{3}$}
\put(34,-57){$\scriptscriptstyle{4}$}
\put(-66,-57){$\scriptscriptstyle{-1}$}
\put(-86,-57){$\scriptscriptstyle{-2}$}
\put(-57,-47){$\scriptscriptstyle{0}$}
\put(-57,-27){$\scriptscriptstyle{1}$}
\put(-57,-7){$\scriptscriptstyle{2}$}
\put(-57,13){$\scriptscriptstyle{3}$}

%


\put(-58,95){${q}$} \put(125,-60){${p}$}
\end{picture}
\caption{$H^{*,*}(G_2(\R^{4,1}))$} \label{fig:g2r41}
\end{figure}

Interestingly, there are situations where there must be nonzero
differentials in the cellular spectral sequences.

As another example, consider now $X=G_2(\R^{4,2})$.  Consider the
three flag symbols $\varphi_1=(2,3)$, $\varphi_2=(2,4)$, and
$\varphi_3=(3,4)$. The spectral sequences associated to the cell
structures with these flag symbols have $E_1$ term given in Figures
\ref{fig:g2r42phi23}, \ref{fig:g2r42phi24}, and \ref{fig:g2r42phi34}
repsectively.

\begin{figure}[htpb]

\centering

\begin{picture}(330,230)(-150,-110)
\put(-110,-50){\vector(1,0){235}}
\put(-50,-100){\vector(0,1){200}}

\put(-50, -51){\line(0,1){130}} \put(-50, -51){\line(1,1){130}}
\put(-50, -91){\line(0,-1){20}} \put(-50, -91){\line(-1,-1){20}}
\put(-50, -51){\circle*{2}}

\put(-28, -51){\line(0,1){130}} \put(-28, -51){\line(1,1){130}}
\put(-28, -91){\line(0,-1){20}} \put(-28, -91){\line(-1,-1){20}}
\put(-28, -51){\circle*{2}}

\put(-10, -11){\line(0,1){90}} \put(-10, -11){\line(1,1){90}}
\put(-10, -51){\line(0,-1){60}} \put(-10, -51){\line(-1,-1){60}}
\put(-10, -11){\circle*{2}}

\put(-8, -11){\line(0,1){90}} \put(-8, -11){\line(1,1){90}} \put(-8,
-51){\line(0,-1){60}} \put(-8, -51){\line(-1,-1){60}} \put(-8,
-11){\circle*{2}}

\put(10, -11){\line(0,1){90}} \put(10, -11){\line(1,1){90}} \put(10,
-51){\line(0,-1){60}} \put(10, -51){\line(-1,-1){60}} \put(10,
-11){\circle*{2}}

\put(32, -11){\line(0,1){90}} \put(32, -11){\line(1,1){90}} \put(32,
-51){\line(0,-1){60}} \put(32, -51){\line(-1,-1){60}} \put(32,
-11){\circle*{2}}

\multiput(-90,-51)(20,0){11}{\line(0,1){3}}
\multiput(-52,-90)(0,20){10}{\line(1,0){3}}
\put(-46,-57){$\scriptscriptstyle{0}$}
\put(-26,-57){$\scriptscriptstyle{1}$}
\put(-6,-57){$\scriptscriptstyle{2}$}
\put(14,-57){$\scriptscriptstyle{3}$}
\put(34,-57){$\scriptscriptstyle{4}$}
\put(-66,-57){$\scriptscriptstyle{-1}$}
\put(-86,-57){$\scriptscriptstyle{-2}$}
\put(-57,-47){$\scriptscriptstyle{0}$}
\put(-57,-27){$\scriptscriptstyle{1}$}
\put(-57,-7){$\scriptscriptstyle{2}$}
\put(-57,13){$\scriptscriptstyle{3}$}
%


\put(-58,95){${q}$} \put(125,-60){${p}$}
\end{picture}

\caption{The $E_1$ page of the cellular spectral sequence for
$G_2(\R^{4,2})$ using $\varphi_1=(2,3)$.} \label{fig:g2r42phi23}

\end{figure}
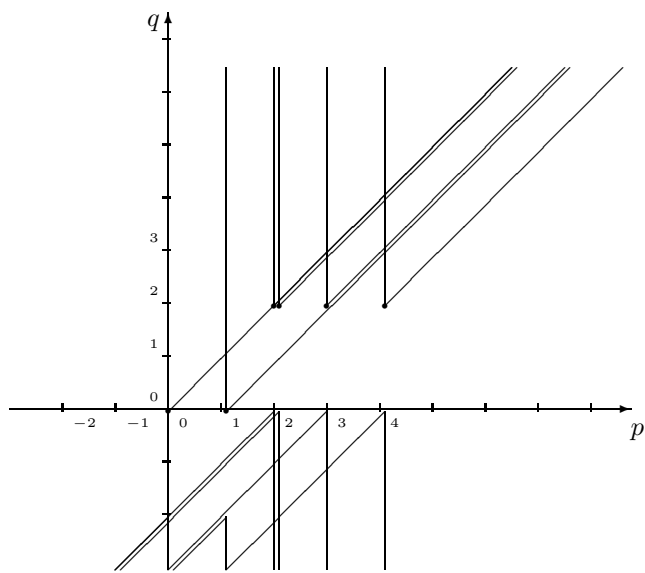

\begin{figure}[htpb]

\centering

\begin{picture}(330,230)(-150,-110)
\put(-110,-50){\vector(1,0){235}}
\put(-50,-100){\vector(0,1){200}}

\put(-50, -51){\line(0,1){130}} \put(-50, -51){\line(1,1){130}}
\put(-50, -91){\line(0,-1){20}} \put(-50, -91){\line(-1,-1){20}}
\put(-50, -51){\circle*{2}}

\put(-28, -31){\line(0,1){110}} \put(-28, -31){\line(1,1){110}}
\put(-28, -71){\line(0,-1){40}} \put(-28, -71){\line(-1,-1){40}}
\put(-28, -31){\circle*{2}}

\put(-10, -31){\line(0,1){110}} \put(-10, -31){\line(1,1){110}}
\put(-10, -71){\line(0,-1){40}} \put(-10, -71){\line(-1,-1){40}}
\put(-10, -31){\circle*{2}}

\put(-8, -31){\line(0,1){110}} \put(-8, -31){\line(1,1){110}}
\put(-8, -71){\line(0,-1){40}} \put(-8, -71){\line(-1,-1){40}}
\put(-8, -31){\circle*{2}}

\put(10, 10){\line(0,1){70}} \put(10, 10){\line(1,1){70}} \put(10,
-31){\line(0,-1){80}} \put(10, -31){\line(-1,-1){80}} \put(10,
10){\circle*{2}}

\put(32, -11){\line(0,1){90}} \put(32, -11){\line(1,1){90}} \put(32,
-51){\line(0,-1){60}} \put(32, -51){\line(-1,-1){60}} \put(32,
-11){\circle*{2}}

\multiput(-90,-51)(20,0){11}{\line(0,1){3}}
\multiput(-52,-90)(0,20){10}{\line(1,0){3}}
\put(-46,-57){$\scriptscriptstyle{0}$}
\put(-26,-57){$\scriptscriptstyle{1}$}
\put(-6,-57){$\scriptscriptstyle{2}$}
\put(14,-57){$\scriptscriptstyle{3}$}
\put(34,-57){$\scriptscriptstyle{4}$}
\put(-66,-57){$\scriptscriptstyle{-1}$}
\put(-86,-57){$\scriptscriptstyle{-2}$}
\put(-57,-47){$\scriptscriptstyle{0}$}
\put(-57,-27){$\scriptscriptstyle{1}$}
\put(-57,-7){$\scriptscriptstyle{2}$}
\put(-57,13){$\scriptscriptstyle{3}$}
%


\put(-58,95){${q}$} \put(125,-60){${p}$}
\end{picture}
\caption{The $E_1$ page of the cellular spectral sequence for
$G_2(\R^{4,2})$ using $\varphi_2=(2,4)$.} \label{fig:g2r42phi24}

\end{figure}

\begin{figure}[htpb]

\centering

\begin{picture}(330,230)(-150,-110)
\put(-110,-50){\vector(1,0){235}}
\put(-50,-100){\vector(0,1){200}}

\put(-50, -51){\line(0,1){130}} \put(-50, -51){\line(1,1){130}}
\put(-50, -91){\line(0,-1){20}} \put(-50, -91){\line(-1,-1){20}}
\put(-50, -51){\circle*{2}}

\put(-28, -31){\line(0,1){110}} \put(-28, -31){\line(1,1){110}}
\put(-28, -71){\line(0,-1){40}} \put(-28, -71){\line(-1,-1){40}}
\put(-28, -31){\circle*{2}}

\put(-10, -31){\line(0,1){110}} \put(-10, -31){\line(1,1){110}}
\put(-10, -71){\line(0,-1){40}} \put(-10, -71){\line(-1,-1){40}}
\put(-10, -31){\circle*{2}}

\put(-8, -31){\line(0,1){110}} \put(-8, -31){\line(1,1){110}}
\put(-8, -71){\line(0,-1){40}} \put(-8, -71){\line(-1,-1){40}}
\put(-8, -31){\circle*{2}}

\put(10, -31){\line(0,1){110}} \put(10, -31){\line(1,1){110}}
\put(10, -71){\line(0,-1){40}} \put(10, -71){\line(-1,-1){40}}
\put(10, -31){\circle*{2}}

\put(32, 29){\line(0,1){50}} \put(32, 29){\line(1,1){50}} \put(32,
-11){\line(0,-1){100}} \put(32, -11){\line(-1,-1){100}} \put(32,
29){\circle*{2}}

\multiput(-90,-51)(20,0){11}{\line(0,1){3}}
\multiput(-52,-90)(0,20){10}{\line(1,0){3}}
\put(-46,-57){$\scriptscriptstyle{0}$}
\put(-26,-57){$\scriptscriptstyle{1}$}
\put(-6,-57){$\scriptscriptstyle{2}$}
\put(14,-57){$\scriptscriptstyle{3}$}
\put(34,-57){$\scriptscriptstyle{4}$}
\put(-66,-57){$\scriptscriptstyle{-1}$}
\put(-86,-57){$\scriptscriptstyle{-2}$}
\put(-57,-47){$\scriptscriptstyle{0}$}
\put(-57,-27){$\scriptscriptstyle{1}$}
\put(-57,-7){$\scriptscriptstyle{2}$}
\put(-57,13){$\scriptscriptstyle{3}$}
%


\put(-58,95){${q}$} \put(125,-60){${p}$}
\end{picture}
\caption{The $E_1$ page of the cellular spectral sequence for
$G_2(\R^{4,2})$ using $\varphi_3=(3,4)$.} \label{fig:g2r42phi34}

\end{figure}

The cohomology of $X$ can be deduced by comparing these three
cellular spectral sequences. We can see from the picture for
$\varphi_2$ that $H^{1,0}(X)=0$, and so the differential leaving the
$(1,0)$ generator in the $\varphi_1$ spectral sequence is non-zero.
Thus, $H^{1,1}(X)=\Z/2$, $H^{2,1}(X)=\Z/2$ and $H^{2,0}(X)=\Z/2$.
In particular, there is a free generator in degree $(1,1)$ and there
is a nontrivial differential leaving the $(2,1)$ generators of the
spectral sequence for $\varphi_2$.  After a change of basis, if
necessary, the differential can be adjusted so that it is zero on
one of the $(2,1)$ generators and the other generator maps
nontrivially.  Now from $\varphi_1$ we see that $H^{4,1}(X)=0$, and
so there is a nontrivial differential leaving the $(3,1)$ generator
in the $\varphi_3$ spectral sequence.  This means that the $(4,2)$
generator in the $\varphi_1$ and $\varphi_2$ spectral sequences must
survive.  Thus, all differentials in the $\varphi_2$ spectral
sequence are known.  They are all zero, except for the one leaving
the two $(2,1)$ generators, which behaves as described above. That
spectral sequence collapses almost immediately to give the
cohomology of $G_2(\R^{4,2})$ pictured in Figure \ref{fig:g2r42}.

\begin{figure}[htpb]

\centering

\begin{picture}(330,230)(-150,-110)
\put(-110,-50){\vector(1,0){235}}
\put(-50,-100){\vector(0,1){200}}

\put(-50, -51){\line(0,1){130}} \put(-50, -51){\line(1,1){130}}
\put(-50, -91){\line(0,-1){20}} \put(-50, -91){\line(-1,-1){20}}
\put(-50, -51){\circle*{2}}

\put(-28, -31){\line(0,1){110}} \put(-28, -31){\line(1,1){110}}
\put(-28, -71){\line(0,-1){40}} \put(-28, -71){\line(-1,-1){40}}
\put(-28, -31){\circle*{2}}

\put(-10, -31){\line(0,1){110}} \put(-10, -31){\line(1,1){110}}
\put(-10, -71){\line(0,-1){40}} \put(-10, -71){\line(-1,-1){40}}
\put(-10, -31){\circle*{2}}

\put(-8, -13){\line(0,1){90}} \put(12, -11){\line(1,1){90}} \put(-8,
-71){\line(0,-1){40}} \put(-8, -71){\line(-1,-1){40}} \put(-8,
-13){\circle*{2}} \put(12, -11){\circle*{2}}

\put(10, 10){\line(0,1){70}} \put(10, 10){\line(1,1){70}} \put(10,
-51){\line(0,-1){60}} \put(-10, -51){\line(-1,-1){40}} \put(10,
10){\circle*{2}} \put(10, -51){\circle*{2}} \put(-10,
-51){\circle*{2}}

\put(32, -11){\line(0,1){90}} \put(32, -11){\line(1,1){70}} \put(32,
-51){\line(0,-1){60}} \put(32, -51){\line(-1,-1){60}} \put(32,
-11){\circle*{2}}

\multiput(-90,-51)(20,0){11}{\line(0,1){3}}
\multiput(-52,-90)(0,20){10}{\line(1,0){3}}
\put(-46,-57){$\scriptscriptstyle{0}$}
\put(-26,-57){$\scriptscriptstyle{1}$}
\put(-6,-57){$\scriptscriptstyle{2}$}
\put(14,-57){$\scriptscriptstyle{3}$}
\put(34,-57){$\scriptscriptstyle{4}$}
\put(-66,-57){$\scriptscriptstyle{-1}$}
\put(-86,-57){$\scriptscriptstyle{-2}$}
\put(-57,-47){$\scriptscriptstyle{0}$}
\put(-57,-27){$\scriptscriptstyle{1}$}
\put(-57,-7){$\scriptscriptstyle{2}$}
\put(-57,13){$\scriptscriptstyle{3}$}
%


\put(-58,95){${q}$} \put(125,-60){${p}$}
\end{picture}
\caption{$H^{*,*}(G_2(\R^{4,2}))$} \label{fig:g2r42}
\end{figure}

By the Freeness Theorem \ref{thm:freeness}, we know that
$H^{*,*}(G_2(\R^{4,2}))$ is free.   Counting the $\Z/2$ vector space
dimensions in each bidegree reveals that the degrees are the same as
those of a free $H^{*,*}(pt)$-module with generators in degrees
$(1,1)$, $(2,1)$, $(2,2)$, ($3,2)$, and $(4,2)$. This is the only
free $H^{*,*}(pt)$-module with these $\Z/2$ dimensions, and so we
have the following computation.

\begin{prop}$H^{*,*}(G_2(\R^{4,2}))$ is a free $H^{*,*}(pt)$-module with generators in degrees $(1,1)$, $(2,1)$, $(2,2)$, $(3,2)$, and $(4,2)$.
\end{prop}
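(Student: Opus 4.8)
The plan is to lean on the Freeness Theorem (Theorem \ref{thm:freeness}), which already guarantees that $H^{*,*}(G_2(\R^{4,2}))$ is a free $H^{*,*}(pt)$-module; consequently the whole module is pinned down once we know the $\Z/2$-dimension of $H^{p,q}(X)$ in finitely many bidegrees. So the real task is a computation of these bigraded groups, and the tool for it is to play the three cellular spectral sequences coming from the flag symbols $\varphi_1 = (2,3)$, $\varphi_2 = (2,4)$, $\varphi_3 = (3,4)$ off against one another, since the target groups are independent of the cell structure used.

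First I would list, via Proposition \ref{prop:schubertcells}, the six Schubert cells of $G_2(\R^{4,2})$ and their bidegrees for each of the three flags, producing the three $E_1$ pages (Figures \ref{fig:g2r42phi23}, \ref{fig:g2r42phi24}, \ref{fig:g2r42phi34}). Then I would extract forced differentials by comparison: the $\varphi_2$ page shows $H^{1,0}(X) = 0$, which forces the differential out of the $(1,0)$-generator in the $\varphi_1$ page to be nonzero, giving $H^{1,1}(X) = H^{2,1}(X) = H^{2,0}(X) = \Z/2$ and, in particular, a nontrivial differential on the two $(2,1)$-generators of the $\varphi_2$ page; after a change of basis this can be arranged to be zero on one $(2,1)$-generator and nonzero on the other. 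Likewise the $\varphi_1$ page shows $H^{4,1}(X) = 0$, forcing a nonzero differential on the $(3,1)$-generator of the $\varphi_3$ page, which in turn shows the $(4,2)$-generator survives in the $\varphi_1$ and $\varphi_2$ pages.

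Next I would observe that these deductions account for every differential in the $\varphi_2$ spectral sequence — all are zero except the one leaving the pair of $(2,1)$-generators, which behaves as above — so that spectral sequence collapses at $E_2 = E_\infty$ to the bigraded groups pictured in Figure \ref{fig:g2r42}. Finally I would tally the $\Z/2$ dimension in each bidegree of that picture; since it agrees with the bidegree-by-bidegree dimensions of the free $H^{*,*}(pt)$-module on generators in degrees $(1,1)$, $(2,1)$, $(2,2)$, $(3,2)$, $(4,2)$, and a free module is determined by the degrees of its generators, freeness (Theorem \ref{thm:freeness}) forces $H^{*,*}(G_2(\R^{4,2}))$ to be exactly that module.

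The main obstacle I anticipate is the bookkeeping in the comparison step: one must check that every differential forced in one spectral sequence is compatible with the bidegrees appearing in the other two, and that no further differentials are possible. The change of basis separating the two $(2,1)$-generators in the $\varphi_2$ page is the delicate point — without it one cannot conclude that exactly one of them supports a differential — and confirming that the collapsed page really has the asserted dimension in every bidegree, not merely in the low ones, is what makes the concluding dimension count legitimate.
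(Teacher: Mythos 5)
Your proposal is correct and follows essentially the same route as the paper: comparing the cellular spectral sequences for the flag symbols $\varphi_1=(2,3)$, $\varphi_2=(2,4)$, $\varphi_3=(3,4)$ to force the differentials, collapsing the $\varphi_2$ sequence, and then combining the Freeness Theorem with a bidegree-by-bidegree $\Z/2$-dimension count to pin down the generator degrees. The points you flag as delicate (the change of basis on the two $(2,1)$-generators and the completeness of the dimension count) are exactly the ones the paper handles.
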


That is, $H^{*,*}(G_2(\R^{4,2}))$ has free generators as displayed
in Figure \ref{fig:cohomg2r42}.

\begin{figure}[htpb]

\centering

\begin{picture}(330,230)(-150,-110)
\put(-110,-50){\vector(1,0){235}}
\put(-50,-100){\vector(0,1){200}}

\put(-50, -51){\line(0,1){130}} \put(-50, -51){\line(1,1){130}}
\put(-50, -91){\line(0,-1){20}} \put(-50, -91){\line(-1,-1){20}}
\put(-50, -51){\circle*{2}}

\put(-28, -31){\line(0,1){110}} \put(-28, -31){\line(1,1){110}}
\put(-28, -71){\line(0,-1){40}} \put(-28, -71){\line(-1,-1){40}}
\put(-28, -31){\circle*{2}}

\put(-10, -31){\line(0,1){110}} \put(-10, -31){\line(1,1){110}}
\put(-10, -71){\line(0,-1){40}} \put(-10, -71){\line(-1,-1){40}}
\put(-10, -31){\circle*{2}}

\put(-8, -11){\line(0,1){90}} \put(-8, -11){\line(1,1){90}} \put(-8,
-51){\line(0,-1){60}} \put(-8, -51){\line(-1,-1){60}} \put(-8,
-11){\circle*{2}}

\put(12, -11){\line(0,1){90}} \put(12, -11){\line(1,1){90}} \put(12,
-51){\line(0,-1){60}} \put(12, -51){\line(-1,-1){60}} \put(12,
-11){\circle*{2}}

\put(30, -11){\line(0,1){90}} \put(30, -11){\line(1,1){70}} \put(30,
-51){\line(0,-1){60}} \put(30, -51){\line(-1,-1){60}} \put(30,
-11){\circle*{2}}

\multiput(-90,-51)(20,0){11}{\line(0,1){3}}
\multiput(-52,-90)(0,20){10}{\line(1,0){3}}
\put(-46,-57){$\scriptscriptstyle{0}$}
\put(-26,-57){$\scriptscriptstyle{1}$}
\put(-6,-57){$\scriptscriptstyle{2}$}
\put(14,-57){$\scriptscriptstyle{3}$}
\put(34,-57){$\scriptscriptstyle{4}$}
\put(-66,-57){$\scriptscriptstyle{-1}$}
\put(-86,-57){$\scriptscriptstyle{-2}$}
\put(-57,-47){$\scriptscriptstyle{0}$}
\put(-57,-27){$\scriptscriptstyle{1}$}
\put(-57,-7){$\scriptscriptstyle{2}$}
\put(-57,13){$\scriptscriptstyle{3}$}
%


\put(-58,95){${q}$} \put(125,-60){${p}$}
\end{picture}

\caption{$H^{*,*}(G_2(\R^{4,2}))$ with free generators shown.}
\label{fig:cohomg2r42}
\end{figure}
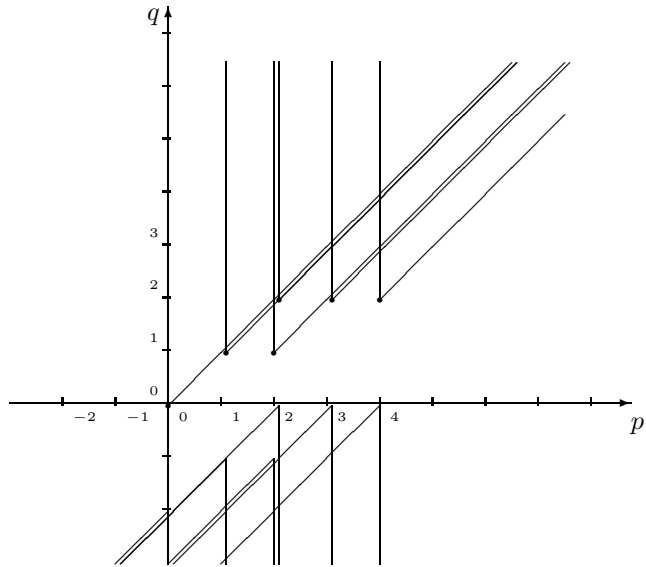

It should be noted that in the case of $G_2(\R^{4,1})$, with the
proper choice of flag symbols, the cell structure is such that the
differentials are all zero, and so the cohomology is free with
generators in the same degrees as the dimensions of the cells.  This
is \bf{not} \rm the case with $G_2(\R^{4,2})$.  Regardless of the
choice of flag symbol, there are some nonzero differentials which
cause some degree shifting of the cohomology generators.

Unfortunately, we cannot play this game indefinitely.  For the
Grassmann manifolds $G_n (\R^{p,q})$ with $n$ and $q$ small enough,
say $n\leq 2$ and $q\leq 2$, the above techniques can be used to
obtain the additive structure of $H^{*,*}(G_n (\R^{p,q}))$. However,
there are examples where the precise degrees of the cohomology
generators cannot be determined by comparing the cellular spectral
sequences for various flag symbols.  A serious inquiry into the
geometry of the attaching maps in these cell structures may reveal
more information.
\newpage
\bibliographystyle{alpha}
\bibliography{references}

\end{document}